\newtheorem{thm}{Theorem}[section]
\newtheorem{pro}[thm]{Proposition}
\newtheorem{lem}[thm]{Lemma}
\newtheorem{cor}[thm]{Corollary}
\theoremstyle{definition}
\newtheorem{defi}[thm]{Definition}
\newtheorem{rem}[thm]{Remark}
\newtheorem{exa}[thm]{Example}
\newcommand{\vu}{\vspace{0.1cm}}
\numberwithin{equation}{section}\theoremstyle{plain}
\theoremstyle{definition}
\title[PARTIAL SMASH COPRODUCT  OF MULTIPLIER HOPF ALGEBRAS]{PARTIAL SMASH COPRODUCT OF MULTIPLIER HOPF ALGEBRAS}
\author[Fonseca, Fontes and Martini]{Graziela Fonseca, Eneilson Fontes and Grasiela Martini}
\address[Fonseca]{Instituto Federal Sul-Rio-Riograndense, Rio Grande do Sul, Brazil}
\email{grazielalangone@gmail.com}
\address[Fontes]{Universidade Federal do Rio Grande, Brazil}
\email{eneilsonfontes@furg.br}
\address[Martini]{Universidade Federal do Rio Grande, Brazil}
\email{grasiela.martini@furg.br}
\begin{document}
	
	\allowdisplaybreaks

\begin{abstract}
In this work we define partial (co)actions on multiplier Hopf algebras, we also present examples and properties. From a partial comodule coalgebra we construct a partial smash coproduct generalizing the constructions made by the L. Delvaux in \cite {Lydia} and E. Batista and J. Vercruysse in \cite {Batista}.
\end{abstract}


\thanks{{\bf MSC 2020:} primary 16T99; secondary 20L05}

\thanks{{\bf Key words and phrases:}Multiplier Hopf algebras, partial comodule coalgebra, partial smash coproduct.}

\maketitle

\tableofcontents

\section{Introduction}

The theory of multiplier Hopf algebras was introduced in 1994 by A. Van Daele, with the aim to generalize Hopf algebras to a nonunital context \cite{Multiplier}. The driving example was an algebra $A_G$ of finitely supported complex functions on a group $G$, \textit{i.e.}, functions that assume nonzero values only for a finite set of elements of $G$. The multiplier algebra $M(A_G)$ consists of all complex functions on $G$ and $A_G \otimes A_G$ can be identified with the complex functions with finite support from $G\times G$ to $\mathbb{C}$. Furthermore, $A_G$ is a multiplier Hopf algebra with the comultiplication given by $\Delta(f)(p,q)=f(pq)$, the counit $\varepsilon(f)=f(1_G)$ and the antipode $(S(f))(p)=f(p^{-1})$, for all $f\in A_G$ and $p,q\in G$.

Some years later, in \cite{Timmermann}, T. Timmermam  reviewed the concept of multiplier Hopf algebras as a particular case of multiplier bialgebras, which extends the notion of bialgebra related to Hopf algebras theory. In this case, a \textit{multiplier bialgebra} is a nondegenerate algebra $A$ equipped with a nondegenerate homomorphism $\Delta: A\longrightarrow M(A\otimes A)$ such that
	
	$i)$ the following subsets of $M(A\otimes A)$ are contained in $A\otimes A\subseteq M(A\otimes A)$:
	
	\begin{center}$\Delta(A)(1\otimes A),\, \,\,\,\,\,\,\,\,\,\Delta(A)(A\otimes 1),\,\,\,\,\,\,\,\,(A\otimes 1)\Delta(A),\,\,\,\,\,\,\,\,(1\otimes A)\Delta(A);$
	\end{center}
	
	$ii)$ $\Delta$ is coassociative: 
	$(\Delta\otimes \imath)\Delta=(\imath\otimes\Delta)\Delta.$\\
	
	If the linear maps $T_1,T_2: A\otimes A \longrightarrow A\otimes A$ given by
	\begin{center}	
		$T_1(a\otimes b)= \Delta(a)(1\otimes b)$\ \ \ and\ \ \ $T_2(a\otimes b)= (a\otimes 1)\Delta(b)$
	\end{center}
	are bijectives, we say the multiplier bialgebra $(A, \Delta)$ is a \textit{multiplier Hopf algebra}.
	
	A multiplier Hopf algebra is called \textit{regular} if the multiplier bialgebra $(A, \sigma \Delta)$ is a multiplier Hopf algebra, where $\sigma$ denotes the canonical flip map. It is easy to check that any Hopf algebra is a multiplier Hopf algebra. Conversely, if $(A, \Delta)$ is a multiplier Hopf algebra and $A$ is unital, $A$ is a Hopf algebra. This proves that the notion of  multiplier Hopf algebra is natural extension of Hopf algebra for nonunital algebras.
Moreover, if $(A,\Delta_A)$ and $(B,\Delta_B)$ are two regular multiplier Hopf algebras, then  $(A\otimes B,\Delta)$ is a regular multiplier Hopf algebra with the product and the coproduct defined in the usual way.

Analogously to Hopf algebras theory, there exist unique linear maps $\varepsilon :A\longrightarrow\Bbbk$ and $S: A\longrightarrow M(A)$, given by
\begin{center}
	$(\varepsilon\otimes \imath)(\Delta(a)(1\otimes b))=ab$ \ \ \ \ \ and \ \ \ \ \ $(\imath\otimes\varepsilon)((a\otimes 1)\Delta(b))=ab,$
\end{center}
\begin{center}
	$m(S\otimes \imath)(\Delta(a)(1\otimes b))=\varepsilon(a)b$ \ \ \ \ \ and \ \ \ \ \ $m(\imath\otimes S)((a\otimes 1)\Delta(b))=\varepsilon(b)a,$
\end{center}
for all $a,b$ in $A$, respectively called \emph{counit} and  \emph{antipode}.

 In the context of regular multiplier Hopf algebras, A. Van Dale  introduced in \cite {Frame} a linear dual structure  $$\hat{A}=\{\varphi(\underline{\hspace{0.2cm}} a) ; a\in A, \ \text{and}\ \varphi\ \text{ is a given left integral} \},$$ which is also a regular multiplier Hopf algebra for any dimension. 
 
Another important property is the existence of (bilateral) local units for a multiplier Hopf algebra $(A, \Delta)$. This means that,  for any finite set of elements $a_1, \ldots , a_n$ of $A$ there exists an element $e\in A$ such that $e a_i = a_i = a_i e$, for all $1 \leq i \leq n$. This is used to justify  that $A^2=A$, which allows one to show that the comultiplication $\Delta$ is a nondegenerate homomorphism (\textit cf.  \cite[Appendix]{Multiplier}). This property also supports the use of the Sweedler's notation in suitable situations (see \cite{Action} and \cite{Sweedler}). Indeed, $\Delta(a)(1\otimes
b)$ can be written as $a_{(1)}\otimes a_{(2)}b$ since
$\Delta(a)(1\otimes b) = \Delta(a)(1\otimes
e)(1\otimes b)= (a_{(1)}\otimes a_{(2)})(1\otimes  b)=a_{(1)}\otimes a_{(2)}b$, where $e$ is a local unit for the element $b$. This notation depends on $b\in A$, but we can use the same local unit $e\in A$ for finite elements of $A$. 

 The study of multiplier Hopf algebra (co)actions on  nondegenerate algebras  was initiated in \cite{Galois}, motivating investigations in diverse directions. Afterwards, L. Delvaux, in \cite{Lydia}, investigated the comodule coalgebra notion involving two multiplier Hopf algebras, and this approach led to the development of a smash coproduct between these algebras, obtaining a new multiplier Hopf algebra.
 
Recently, partial (co)actions of multiplier Hopf algebras on  nondegenerate algebras were developed by the authors in \cite{Grasiela, Fonseca}, generalizing the notions introduced by S. Caenepeel and K. Jassen in \cite{Caenepeel} and also the theory constructed by A. Van Daele and Y. H. Zhang in \cite{Galois}. However, the partial (co)actions theory of multiplier Hopf algebras remains a landscape to be explored. The difficulty is that there are a lot of classical expressions making no sense in this broader context.

This present work intends to generalize some notions and results of \cite{Batista} and \cite{Lydia} to the context of partial comodule coalgebra of multiplier Hopf algebras, using ideas such as extensions and projections. This paper is organized as follows. In Section 2 the basic notions of (bi)module over a vector space and comodule coalgebra in the context of multiplier Hopf algebras are reminded. In Section 3 we introduce the definition of a  partial comodule coalgebra. Several results and examples will be illustrated, such as the necessary and sufficient conditions for a partial comodule coalgebra to be global and also a structure of partial comodule coalgebra is induced from a global one via projections. Finally, in Section 4 we finish by constructing a partial smash coproduct  associated to a partial comodule coalgebra, generalizing the constructions presented in \cite{Batista} and \cite{Lydia}.


\vu

\vu


\subsection*{Conventions}
Throughout this paper, all vector spaces and algebras will be considered as having a fixed base field $\Bbbk$.

An algebra $A$ is a \emph{nondegenerate algebra} when it has the property  that $a=0$ if $ab =0$ for all $b \in A$ and $b=0$ if $ab=0$ for all $a \in A$. We will denote the multiplier algebra of $A$ by $M(A)$ which is the usual vector space of all the ordered pairs $(\overline{X},\overline{\overline{X}})$ of linear maps on $A$ such that $ a\overline{X}(b)=\overline{\overline{X}}(a)b$, for all $a,b\in A$. It follows immediately that $\overline{X}(ab)=\overline{X}(a)b$ and $\overline{\overline{X}}(ab)=a\overline{\overline{X}}(b)$, for all $a,b\in A$. The product is given by the rule $(\overline{X},\overline{\overline{X}})(\overline{Y},\overline{\overline{Y}})=(\overline{X}\circ \overline{Y},\overline{\overline{Y}}\circ \overline{\overline{X}})$ and the identity element is given by the pair $1=(\imath,\imath)$ where $\imath$ denotes the identity map of $A$. At appropriate situations, we will use the notation
$\overline{X}(a)=Xa$ and $\overline{\overline{X}}(a)=aX$, for all $a\in A.$

 Moreover, there exists a canonical algebra monomorphism $\jmath:A\to M(A)$ given by $a\mapsto (\overline{X}_a,\overline{\overline{X}}_a)$, where $\overline{X}_a$ (resp., $\overline{\overline{X}}_a$) denotes the left (resp., right) multiplication by $a$, for all $a\in A$. Furthermore, if $A$ is unital then $\jmath$ is an isomorphism.

 The unadorned symbol $\otimes$ will always mean $\otimes_{\Bbbk}$, and $1\otimes 1\otimes a$ will be denoted by $1^2 \otimes a$, for all $a\in A$, analogously we will use this convention for similar cases. Finally, the pair $(A,\Delta)$ (or simply $A$) will always denote a multiplier Hopf algebra and $Y$ a nondegenerate algebra, unless others conditions are required.

\vu

\section{Basic Definitions}
\subsection{Extended (bi)module} The purpose of this section is to recall the concept of extended bimodules, first presented in \cite{Tools}. For this, initially we will suppose that $Y$ is a vector space and $A$ is a nondegenerate algebra.
%
%
%

\begin{defi}\cite{Action} We call $Y$ a \textit{(global) left $A$-module}, if there exits a linear map
		\begin{eqnarray*}
		\triangleright : A\otimes Y &\longrightarrow &Y\\
		a\otimes y &\longmapsto & a\triangleright y
	\end{eqnarray*}
	satisfying $a\triangleright(b\triangleright y) =ab\triangleright y $, for all $a,b\in A$ and $y\in Y$. 
	In this case, we say that $\triangleright$ is a left action of $ A $ on $ Y $. Furthermore, the left $A$-module $Y$ is called nondegenerate if $a\triangleright y=0$, for all $a\in A$, then $y=0$.
	\end{defi}
 Analogously, we define a nondegenerate \textit{right $A$-module}. If $A$ has unit $1_A$, is natural assume that $1_A \triangleright y =y$, for all $y\in Y$. This notion is extended as follows way.

 \begin{defi} A left $A$-module $Y$ is called \textit{unital} if $A\triangleright Y=Y$.
\end{defi}

Any unital module is nondegenerate but the converse need not be true. Considering a regular multiplier Hopf algebra $A$, the property below is fundamental for the use of the classical Sweedler notation in this background.
 
\begin{rem}\cite{Action} \label{localunitinact} If $Y$ is a unital left $A$-module, then, given $a_1, ..., a_n\in A$ and $y_1, y_2, ..., y_m\in Y$, there exists an element $e\in A$ such that $ea_i=a_i=a_ie$, for all $i\in\{1, ..., n\}$ and $e\triangleright y_j =y_j$, for all $j\in\{1, ..., m\}$.
	\label{obs_unidacao}
\end{rem}

%

\begin{pro} \textnormal{\cite{Action}}\label{extendaction} Let $Y$ be a unital left $A$-module via $\triangleright$. There exists a unique extension to a left $M(A)$-module and $1_{M(A)}\triangleright y =y$, for all $y\in Y.$	
\end{pro}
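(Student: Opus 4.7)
The plan is to define, for $m \in M(A)$ and $y \in Y$, the extended action as follows. Using the hypothesis $A \triangleright Y = Y$, write $y$ as a finite sum $y = \sum_i a_i \triangleright y_i$ with $a_i \in A$, $y_i \in Y$, and set
$$ m \triangleright y := \sum_i (m a_i) \triangleright y_i, $$
where $m a_i \in A$ is the product of $m \in M(A)$ with $a_i \in A \hookrightarrow M(A)$ (computed inside $M(A)$, which lies in $A$ since $A$ is an ideal of $M(A)$). The bulk of the work is then to verify that this is independent of the chosen decomposition.

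For well-definedness, suppose $\sum_i a_i \triangleright y_i = 0$. By Remark \ref{localunitinact}, there exists $e \in A$ with $e a_i = a_i$ for every $i$. Then
$$ \sum_i (m a_i) \triangleright y_i = \sum_i ((me) a_i) \triangleright y_i = (me) \triangleright \Bigl( \sum_i a_i \triangleright y_i \Bigr) = 0, $$
because $me \in A$ and the original $A$-action is, by assumption, a well-defined map. This is where the local units supplied by Remark \ref{localunitinact} are indispensable; without them one cannot push the multiplier $m$ past the sum in a controlled way.

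Once the action is shown to be well-defined, the $M(A)$-module axioms are essentially formal. Directly from the definition, $1_{M(A)} \triangleright y = \sum_i (1_{M(A)} a_i) \triangleright y_i = \sum_i a_i \triangleright y_i = y$. For associativity, note that $m_2 \triangleright y = \sum_i (m_2 a_i) \triangleright y_i$ is itself a valid $A$-decomposition (each $m_2 a_i$ lies in $A$), so
$$ m_1 \triangleright (m_2 \triangleright y) = \sum_i (m_1 (m_2 a_i)) \triangleright y_i = \sum_i ((m_1 m_2) a_i) \triangleright y_i = (m_1 m_2) \triangleright y, $$
using associativity of the product in $M(A)$. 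Taking $m = a \in A$ in the formula recovers the original action via the embedding $\jmath : A \hookrightarrow M(A)$, so the new action extends the old one.

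Uniqueness is then almost immediate: if $\tilde{\triangleright}$ is any left $M(A)$-module structure on $Y$ extending $\triangleright$ and satisfying $1_{M(A)} \tilde{\triangleright} y = y$, then for $y = \sum_i a_i \triangleright y_i$,
$$ m \,\tilde{\triangleright}\, y = \sum_i m \,\tilde{\triangleright}\, (a_i \triangleright y_i) = \sum_i (m a_i) \,\tilde{\triangleright}\, y_i = \sum_i (m a_i) \triangleright y_i = m \triangleright y, $$
since each $m a_i \in A$ and $\tilde{\triangleright}$ restricts to $\triangleright$ on $A$. I expect the only real obstacle to be the well-definedness step; everything else is a direct manipulation once Remark \ref{localunitinact} does the heavy lifting.
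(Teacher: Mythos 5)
Your proof is correct and follows exactly the standard construction: the paper states this proposition without proof, citing \cite{Action}, and the argument there is the same one you give --- define $m\triangleright y:=\sum_i (ma_i)\triangleright y_i$ on a decomposition $y=\sum_i a_i\triangleright y_i$ furnished by unitality, and use the local units of Remark \ref{localunitinact} to absorb $m$ into a single element $me\in A$ for the well-definedness step. Nothing further is needed.
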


\begin{exa}\cite{Action} Let $A$ be a multiplier Hopf algebra. Then, $A$ is a unital left $A$-module with action determined by its product.
\end{exa}

We present the next remark to justify the items of the definition of comodule coalgebra. Henceforward, to simplify the notation, we will denote $a\triangleright y$ by $ay$, for all $a\in A$, $y\in Y$, similarly for right modules.

\begin{rem}\label{notacaodepares} Let $A$ be a nondegenerate algebra and let $Y$ be a vector space. Consider $A\otimes Y$ with an $A$-bimodule structure defining left and right actions given by the product on the first factor of tensor product. Then,
	$$ax=a(\sum_{i}a_i\otimes y_i)=\sum_{i}aa_i\otimes y_i=(a\otimes 1)x$$
	and, on the other side, we have that $xa=x(a\otimes 1),$ for all $a\in A$ and $x\in A\otimes Y.$
	
	\begin{defi}\cite{Tools} Let $A\otimes Y$ be a nondegenerate $A$-bimodule. We define $M_0(A\otimes Y)$ to be the vector space of pairs $(\lambda, \rho)$ of linear maps from $A$ to $A\otimes Y$ satisfying
		\begin{eqnarray}\label{compatMo}
		a\lambda(b)=\rho(a)b,
		\end{eqnarray}
		for all $a,b\in A.$
		\end{defi}
		
		We can obtain an immersion from $A\otimes Y$ to $M_0(A\otimes Y)$ by associating to each $x\in A\otimes Y$ two linear maps $\lambda$ and $\rho$ defined by $\lambda(a)=x(a\otimes 1)$ \  \ and \  \ $\rho(a)=(a\otimes 1)x.$
		Moreover, it makes sense to write $z(a\otimes 1)$ for $\lambda(a)$ and $(a\otimes 1)z$ for $\rho(a)$ whenever $z=(\lambda,\rho).$
		Therefore we can extend the actions defined in Remark \ref{notacaodepares} in the following way.
		 
		 \begin{defi} The $A$-bimodule $M_0(A\otimes Y)$ defined by $az=(a\lambda(\_),\rho(\_a))$ and $za=(\lambda(a\_),\rho(\_)a),$
			for all $a\in A$ and $z=(\lambda,\rho)\in M_0(A\otimes Y),$ is called \textit{completion} or \textit{extended bimodule} of the $A$-bimodule $A\otimes Y.$
	\end{defi}
		It is easy to check that the construction presented above also works for all vector space $X$ that has a nondegenerate $A$-bimodule structure, so there is always the completion $M_0(X).$ We note that if $X=A$ and $A$ is seen as an $A$-bimodule via its product, then $M_0(A)=M(A).$ For this reason, it is natural to call an element in $M_0(X)$ of multiplier.
	
	In particular, given an algebra $A$ and the vector space $Y$, with the previous notation, we can consider $A\otimes A\otimes Y$ as an $A\otimes A$-bimodule via left and right product on the first two factors of the tensor product. Therefore we can define its completion $M_0(A\otimes A\otimes Y).$ In this case, we will write the actions of $A\otimes A$ as $(a\otimes a'\otimes 1)z$ and $z(a\otimes a'\otimes 1)$ whenever $a,a'\in A$ and $z$ is an element in $M_0(A\otimes A\otimes Y)$, see \cite{Tools}.	
	
	Suppose $X$ a tensor combination of $A$ and $Y$. When $X$ is an $A$-bimodule or a $Y$-bimodule we will denote by $M_{0,i}^A(X)$ the completion of $X$ whenever the action of $A$ is on the $i$-th tensor factor.  Similarly $M_{0,i}^Y(X)$ denotes the completion of $X$ whenever the action of $Y$ is on the $i$-th tensor factor. To illustrate this convention, consider the vector space $X=Y\otimes A\otimes Y$ as a $Y$-bimodule via the product on the third tensor factor, thus  $M_{0,3}^Y(Y\otimes A\otimes Y)$ is its completion. 
\end{rem}


\subsection{Global Comodule Coalgebra}

\quad \
In this section, the notion of global comodule coalgebra is presented. For more details, we refer \cite{Action}, \cite{Galois} and \cite{Lydia}. For what follows, we shall assume that $Y$ is a nondegenerate algebra and $A$ is a regular multiplier Hopf algebra.

\begin{defi}\cite{Lydia}\label{comodulo}
We say that $Y$ is a \textit{left $A$-comodule}, if there is an injective linear map $\rho:Y\longmapsto M(A\otimes Y)$ such that:
	\begin{enumerate}
		\item[(i)]$\rho(Y)(A\otimes1)\subseteq A\otimes Y$\ \ and\ \ $(A\otimes1)\rho(Y)\subseteq A\otimes Y;$
		
		\item[(ii)]$(\imath_A\otimes\rho)\rho=(\Delta_A\otimes\imath_Y)\rho$\ \ \textit{(coassociativity)}.	
	\end{enumerate}	
\end{defi}

Similarly, we can define a \textit{right $A$-comodule}. In that case, the map $\rho$ is called a (global) coaction of $A$ on $Y$.  

\begin{rem}\label{extrho} The right side in (ii) makes sense because  $\Delta_A\otimes\imath_Y$ is a nondegenerate homomorphism, so we can extend it to $M(A\otimes Y)$. On the other hand, to justify the left side in (ii), we use (i), and define $(\imath_A\otimes\rho)(\rho(y))$ as a multiplier in $M_{0}(A\otimes A\otimes Y)\subseteq M(A\otimes A\otimes Y)$, for all $y\in Y$, as follows:
	\begin{eqnarray*}
		\overline{(\imath_A\otimes \rho)(\rho(y))}(a\otimes b)&=&(\imath_A\otimes \rho)(\rho(y))(a\otimes b\otimes 1)\\
		&=& (\imath_A\otimes \rho)(\rho(y)(a\otimes 1))(1\otimes b\otimes 1),\\
		\overline{\overline{(\imath_A\otimes \rho)(\rho(y))}}(a\otimes b)&=&	(a\otimes b\otimes 1)(\imath_A\otimes \rho)(\rho(y))\\
		&=& (1\otimes b\otimes 1)(\imath_A\otimes \rho)((a\otimes1)\rho(y)),
	\end{eqnarray*}     
	for all $a,b \in A.$ Thus, the coassociativity can be expressed by	\begin{eqnarray*}\label{igualdadecoassoc}
		(\imath_A\otimes\rho)(\rho(y)(a\otimes 1))(1\otimes b\otimes 1)=(\Delta_A\otimes\imath_Y)(\rho(y))(a\otimes b\otimes 1),\\
		\label{extcoa}(1\otimes b\otimes 1)(\imath_A\otimes\rho)((a\otimes 1)\rho(y))=(a\otimes b\otimes 1)(\Delta_A\otimes\imath_Y)(\rho(y)),
	\end{eqnarray*}
	for all $y\in Y$, $a, b\in A$.
\end{rem}


\begin{pro}\textnormal{\cite{Galois}}\label{epsiloncoacao} If $Y$ is a left $A$-comodule via $\rho$, then $(\varepsilon_A\otimes \imath_Y)(\rho(y))=y$, for all $y\in Y$.
\end{pro}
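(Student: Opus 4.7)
The plan is to collapse the first $A$-factor in the coassociativity identity via $\varepsilon_A$ and then invoke injectivity of $\rho$ to extract the desired equality.

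I would start from the explicit form of coassociativity in Remark \ref{extrho},
$$(\imath_A \otimes \rho)(\rho(y)(a \otimes 1))(1 \otimes b \otimes 1) = (\Delta_A \otimes \imath_Y)(\rho(y))(a \otimes b \otimes 1),$$
for $a, b \in A$, and apply $\varepsilon_A \otimes \imath_A \otimes \imath_Y$ to both sides. Writing $\rho(y)(a \otimes 1) = \sum_i a_i \otimes y_i \in A \otimes Y$, the left-hand side reduces, by linearity of $\rho$, to
$$\sum_i \varepsilon_A(a_i)\,\rho(y_i)(b \otimes 1) = \rho\!\bigl((\varepsilon_A \otimes \imath_Y)(\rho(y)(a \otimes 1))\bigr)(b \otimes 1).$$
For the right-hand side I would use the identity $(\varepsilon_A \otimes \imath_A)(\Delta_A(c)(a \otimes b)) = \varepsilon_A(a)\,cb$, which follows from the counit axiom together with the fact that $\varepsilon_A$ is an algebra homomorphism; this rewrites the right-hand side as $\varepsilon_A(a)\,\rho(y)(b \otimes 1) = \rho(\varepsilon_A(a)\,y)(b \otimes 1)$.

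Equating the two sides and varying $b \in A$ then yields $\rho\!\bigl((\varepsilon_A \otimes \imath_Y)(\rho(y)(a \otimes 1))\bigr) = \rho(\varepsilon_A(a)\,y)$ in $M(A \otimes Y)$, and injectivity of $\rho$ (from Definition \ref{comodulo}) produces
$$(\varepsilon_A \otimes \imath_Y)(\rho(y)(a \otimes 1)) = \varepsilon_A(a)\,y, \qquad a \in A,$$
which is the multiplier-algebra incarnation of $(\varepsilon_A \otimes \imath_Y)(\rho(y)) = y$; selecting any $a \in A$ with $\varepsilon_A(a) = 1$ recovers $y$ on the nose.

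The main technical subtlety is that $(\Delta_A \otimes \imath_Y)(\rho(y))$ is only a multiplier in $M(A \otimes A \otimes Y)$, so $\varepsilon_A \otimes \imath_A \otimes \imath_Y$ cannot be applied to it directly; however, the pairing with $a \otimes b \otimes 1$ lands in $A \otimes A \otimes Y$, where the counit identity is legitimate. A smaller point is the passage from the pointwise-in-$b$ equality to an equality of multipliers, which uses nondegeneracy on the second tensor factor.
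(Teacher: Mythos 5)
The paper does not actually prove this proposition; it is quoted from \cite{Galois}, with only the remark that $\varepsilon_A\otimes\imath_Y$ extends to $M(A\otimes Y)$ because it is a nondegenerate homomorphism. Your argument is correct and is essentially the standard proof from that reference: collapse the first leg of the coassociativity identity with $\varepsilon_A$ and invoke the injectivity of $\rho$, which is built into Definition \ref{comodulo} precisely so that this counit property can be derived. The one step worth writing out in full is the evaluation of $\varepsilon_A\otimes\imath_A\otimes\imath_Y$ on the right-hand side $(\Delta_A\otimes\imath_Y)(\rho(y))(a\otimes b\otimes 1)$: since the legs of the multiplier $\rho(y)$ are not separated, the identity $(\varepsilon_A\otimes\imath_A)(\Delta_A(c)(a\otimes b))=\varepsilon_A(a)\,cb$ cannot be applied verbatim; one should first cover by writing $a\otimes b=\sum_k\Delta_A(p_k)(q_k\otimes r_k)$ (possible because $\Delta_A(A)(A\otimes A)=A\otimes A$), so that $(\Delta_A\otimes\imath_Y)(\rho(y))(\Delta_A(p_k)\otimes 1)=(\Delta_A\otimes\imath_Y)(\rho(y)(p_k\otimes 1))$ reduces everything to genuine elements of $A\otimes Y$, where your counit identity applies. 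With that covering argument supplied, the remaining steps --- passing from the pointwise-in-$b$ equality to an equality in $M(A\otimes Y)$ via nondegeneracy, applying injectivity of $\rho$, and relating $(\varepsilon_A\otimes\imath_Y)(\rho(y)(a\otimes 1))=\varepsilon_A(a)y$ to the extended map evaluated at $\rho(y)$ --- are all sound.
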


Note that, $(\varepsilon_A\otimes \imath_Y)(\rho(y))$ is well defined since  $\varepsilon_A$ is a nondegenerate homomorphism. Therefore we can extend the map $\varepsilon_A\otimes \imath_Y$ to $M(A\otimes Y)$.

\begin{rem} We use Sigma notation (without sum) to express items (i) and (ii) of Definition \ref{comodulo}:
	\begin{enumerate}
		\item[(1)] $\rho(y)(a\otimes 1)= y^{(-1)}a\otimes y^{(0)}\in A\otimes Y$\quad and \quad $(a\otimes 1)\rho(y)= ay^{(-1)}\otimes y^{(0)}\in A\otimes Y$;
		\item[(2)] $ ay^{(-1)}\otimes by^{(0)(-1)}\otimes y^{(0)(0)}=\sum_i b_i(a_iy^{(-1)})_{(1)}\otimes (a_iy^{(-1)})_{(2)}\otimes y^{(0)}$, denoting $a\otimes b=\sum_i (b_i\otimes1)\Delta(a_i)$.
	\end{enumerate}
	\label{notsigma_global}
\end{rem}



Let $Y$ be a left $A$-comodule via $\rho$, define 
\begin{eqnarray}\label{defdaaplT}
	T: Y\otimes A &\longrightarrow & A\otimes Y\\
	y\otimes a &\longmapsto & \rho(y)(a\otimes 1)\nonumber.
\end{eqnarray}

The map $T$ is well defined since $\rho$ is bijective (see \cite{Lydia}).
\begin{pro}\textnormal{\cite{Lydia}}\label{prop.1.5Lydia} Let $Y$ be a left $A$-comodule via $\rho$. Then,
	$(T\otimes\imath_A)(y\otimes\Delta_A(a))\in M_{0,1}^A(A\otimes Y\otimes A).$
\end{pro}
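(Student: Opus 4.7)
The object $(T \otimes \imath_A)(y \otimes \Delta_A(a))$ is formal, because $\Delta_A(a)$ lies in $M(A \otimes A)$ rather than in $A \otimes A$, so $T \otimes \imath_A$ cannot be applied directly. To realize it as a genuine multiplier in $M_{0,1}^A(A \otimes Y \otimes A)$, I will construct a pair of linear maps $\alpha, \beta : A \to A \otimes Y \otimes A$ satisfying the compatibility
\[
(c \otimes 1 \otimes 1)\,\alpha(b) = \beta(c)\,(b \otimes 1 \otimes 1) \qquad \text{for all } b,c \in A.
\]
Two covering properties make this possible: the multiplier Hopf algebra axioms guarantee that $\Delta_A(a)(b \otimes 1) \in A \otimes A$ and $(c \otimes 1)\Delta_A(a) \in A \otimes A$, while item (i) of Definition \ref{comodulo} guarantees that $(c \otimes 1)\rho(y) \in A \otimes Y$ and $\rho(y)(b \otimes 1) \in A \otimes Y$.

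For the right-action component, I set
\[
\alpha(b) := (T \otimes \imath_A)\bigl(y \otimes \Delta_A(a)(b \otimes 1)\bigr),
\]
which is a bona fide element of $A \otimes Y \otimes A$ since $\Delta_A(a)(b \otimes 1) \in A \otimes A$; informally this is $y^{(-1)}a_{(1)}b \otimes y^{(0)} \otimes a_{(2)}$. For the left-action component I resolve the two multipliers in the opposite order: write $(c \otimes 1)\rho(y) = \sum_i d_i \otimes z_i \in A \otimes Y$, and for each $i$ use $(d_i \otimes 1)\Delta_A(a) = \sum_j p_{ij} \otimes q_{ij} \in A \otimes A$ to define
\[
\beta(c) := \sum_{i,j} p_{ij} \otimes z_i \otimes q_{ij},
\]
informally $cy^{(-1)}a_{(1)} \otimes y^{(0)} \otimes a_{(2)}$. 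A short argument using the nondegeneracy of $A$ shows that $\alpha$ and $\beta$ are linear and independent of the chosen finite expansions.

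The compatibility relation is then a direct unwinding. Expanding $\Delta_A(a)(b \otimes 1) = \sum_k m_k \otimes n_k$ and $(c \otimes 1)\rho(y) = \sum_i d_i \otimes z_i$, moving the factor $c$ through $T$ on the $\alpha$ side, and reassociating via $(d_i \otimes 1)\Delta_A(a)(b \otimes 1)$, both sides collapse to $\sum_{i,j} p_{ij}b \otimes z_i \otimes q_{ij}$. In Sweedler shorthand both sides read $cy^{(-1)}a_{(1)}b \otimes y^{(0)} \otimes a_{(2)}$, confirming the defining condition \eqref{compatMo} of $M_{0,1}^A(A \otimes Y \otimes A)$.

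The principal obstacle is not algebraic but notational: one must pick the correct order in which to resolve the two multipliers $\rho(y)$ and $\Delta_A(a)$. The natural but wrong guess of defining the left action by $(T \otimes \imath_A)(y \otimes (b \otimes 1)\Delta_A(a))$ would produce $y^{(-1)}ba_{(1)} \otimes y^{(0)} \otimes a_{(2)}$ instead of $by^{(-1)}a_{(1)} \otimes y^{(0)} \otimes a_{(2)}$, since the factor $b$ must be absorbed into the coaction $\rho(y)$ rather than into $\Delta_A(a)$. Once the correct resolutions are identified, as above, the verification is routine and yields the desired multiplier.
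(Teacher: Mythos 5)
Your proposal is correct and follows essentially the same route as the paper: you define the right component by first covering $a_{(1)}$ with $b$ via $\Delta_A(a)(b\otimes 1)\in A\otimes A$ and then applying $T$, and the left component by first covering $y^{(-1)}$ with $c$ via $(c\otimes 1)\rho(y)\in A\otimes Y$ and then covering $a_{(1)}$, which is exactly the pair of coverings the paper uses. The only difference is that you spell out the compatibility check $(c\otimes 1\otimes 1)\alpha(b)=\beta(c)(b\otimes 1\otimes 1)$, which the paper leaves implicit.
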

\begin{proof}
It is enough to define,
	\begin{eqnarray*}
		((T\otimes\imath_A)(y\otimes\Delta_A(a)))(a'\otimes 1\otimes1)&=&\sum y^{(-1)}a_{(1)}a'\otimes y^{(0)}\otimes a_{(2)},
	\end{eqnarray*} 
	where $a_{(1)}$ is covered by $a'$ and $y^{(-1)}$ is covered by $a_{(1)}a',$ and
	\begin{eqnarray*}
		(a'\otimes 1\otimes1)((T\otimes\imath_A)(y\otimes\Delta_A(a)))&=&\sum a'y^{(-1)}a_{(1)}\otimes y^{(0)}\otimes a_{(2)},
	\end{eqnarray*} 
	where $y^{(-1)}$ is covered by $a'$ and $a_{(1)}$ is covered by $a'y^{(-1)}.$
\end{proof}

\begin{pro}\label{paraequiv}\textnormal{\cite{Lydia}} Let $Y$ be a left $A$-comodule via $\rho$. Then,
	\begin{enumerate}
		\item [(i)] $(\imath_A\otimes T)((a'\otimes 1\otimes 1)(T\otimes \imath_A)(y\otimes\Delta_A(a))(1\otimes a''\otimes 1)=(\imath_A\otimes\rho)((a'\otimes1)\rho(y))(\Delta_A(a)(1\otimes a'')\otimes 1),$
		for all $a,a',a''\in A$, $y\in Y.$
		\item[(ii)]	$(\imath_A\otimes\rho)\rho=(\Delta_A\otimes\imath_Y)\rho$ if and only if $(\imath_A\otimes T)(T\otimes \imath_A)(\imath_Y\otimes\Delta_A)=(\Delta_A\otimes\imath_Y)T.$
		\end{enumerate}
	\end{pro}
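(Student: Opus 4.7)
The plan is to treat both identities as equalities of multipliers in $M_0(A\otimes A\otimes Y)$, and to verify them by covering the free $A$-slots with $(a'\otimes 1\otimes 1)$ and $(1\otimes a''\otimes 1)$ and expanding in the Sigma notation of Remark \ref{notsigma_global}. The computational engines are Proposition \ref{prop.1.5Lydia} and the multiplier form of coassociativity recorded in Remark \ref{extrho}.

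For part (i), Proposition \ref{prop.1.5Lydia} gives $(a'\otimes 1\otimes 1)(T\otimes\imath_A)(y\otimes\Delta_A(a))=a'y^{(-1)}a_{(1)}\otimes y^{(0)}\otimes a_{(2)}$. Applying $\imath_A\otimes T$ replaces $y^{(0)}\otimes a_{(2)}$ by $\rho(y^{(0)})(a_{(2)}\otimes 1)=y^{(0)(-1)}a_{(2)}\otimes y^{(0)(0)}$, and multiplying by $(1\otimes a''\otimes 1)$ on the right yields $a'y^{(-1)}a_{(1)}\otimes y^{(0)(-1)}a_{(2)}a''\otimes y^{(0)(0)}$. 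The right-hand side unfolds as $(\imath_A\otimes\rho)(a'y^{(-1)}\otimes y^{(0)})(a_{(1)}\otimes a_{(2)}a''\otimes 1)$, which produces the same element of $A\otimes A\otimes Y$.

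For part (ii), I would apply the same covers to both sides of the claimed $T$-identity. By part (i) the covered left-hand side is $a'y^{(-1)}a_{(1)}\otimes y^{(0)(-1)}a_{(2)}a''\otimes y^{(0)(0)}$, while a direct computation shows the covered right-hand side equals $(a'\otimes 1)\Delta_A(y^{(-1)}a)(1\otimes a'')\otimes y^{(0)}=a'(y^{(-1)})_{(1)}a_{(1)}\otimes (y^{(-1)})_{(2)}a_{(2)}a''\otimes y^{(0)}$. The forward implication is then immediate from coassociativity. For the reverse implication, equality of these two covered expressions for all $a,a',a''\in A$, combined with the surjectivity of $T_1(a\otimes a'')=\Delta_A(a)(1\otimes a'')$ and the nondegeneracy of $A$, forces the equality $(\imath_A\otimes\rho)\rho(y)=(\Delta_A\otimes\imath_Y)\rho(y)$ as multipliers.

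The main obstacle is the multiplier bookkeeping: neither $(\imath_A\otimes T)(T\otimes\imath_A)(\imath_Y\otimes\Delta_A)(y\otimes a)$ nor $(\Delta_A\otimes\imath_Y)T(y\otimes a)$ lives in $A\otimes A\otimes Y$ until both outer $A$-slots are covered. One must therefore justify every step inside the completion $M_0(A\otimes A\otimes Y)$, track in which completion each intermediate expression lives (in particular when passing $(a'\otimes 1\otimes 1)$ through $\imath_A\otimes T$ and across $\Delta_A\otimes\imath_Y$), and appeal to nondegeneracy only at the very end to pass from the covered identity to the multiplier identity.
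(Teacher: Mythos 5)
Your argument is correct and is exactly the computation the paper intends: the paper itself states this proposition without proof (citing Delvaux), but the ingredients you use — the multiplier $(T\otimes\imath_A)(y\otimes\Delta_A(a))$ from Proposition \ref{prop.1.5Lydia} and the covered form of coassociativity from Remark \ref{extrho} — are precisely the ones the paper sets up for this purpose, and the same covering-and-Sigma-notation computation reappears in its proofs of the partial analogues (e.g.\ Lemma 3.5 and Remark 3.4). Your handling of the two directions in (ii), via the density of $\Delta_A(A)(1\otimes A)=A\otimes A$ and nondegeneracy, is the standard and correct way to pass between the covered identity and the multiplier identity.
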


%

\begin{defi}\cite{Lydia}\label{comodcoalglob} Let $Y$ be a multiplier Hopf algebra. We say that $Y$ is a  \textit{left} $A$-\textit{comodule coalgebra} if $Y$ is a left $A$-comodule via $\rho$ and
	\begin{enumerate}
		\item[(i)]\label{suposicao} $((\imath_Y\otimes T)(\Delta_Y(y)\otimes a))(1\otimes 1\otimes y')\in Y\otimes A\otimes Y$ and $(1\otimes 1\otimes y')((\imath_Y\otimes T)(\Delta_Y(y)\otimes a))\in Y\otimes A\otimes Y$
		
		\item[(ii)]$(\imath_A\otimes\Delta_Y)T(y\otimes a)=(T\otimes\imath_Y)(\imath_Y\otimes T)(\Delta_Y(y)\otimes a),$
	\end{enumerate}
	for all $y,y'\in Y$, $a\in A.$
\end{defi}

\begin{pro}\textnormal{\cite{Lydia}}\label{paracaracterizacao} Let $Y$ be a left $A$-comodule coalgebra via $\rho$. Then, $(\imath_A\otimes\varepsilon_Y)(\rho(y)(a\otimes 1))=\varepsilon_Y(y)a,$ for all $y\in Y$, $a\in A$.
\end{pro}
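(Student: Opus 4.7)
The plan is to extract the claimed identity by applying the comodule-coalgebra compatibility of Definition \ref{comodcoalglob}(ii), suitably covered, and then exploiting the counit axioms of $\Delta_Y$ together with the bijectivity of the canonical map $T$.

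First I would fix $y'\in Y$ with $\varepsilon_Y(y')=1$, which is possible since $\varepsilon_Y$ is a nonzero character, and multiply both sides of the equation in Definition \ref{comodcoalglob}(ii) on the right by $(1\otimes 1\otimes y')$. By Definition \ref{comodcoalglob}(i), the resulting expressions lie in $A\otimes Y\otimes Y$. I then apply $\imath_A\otimes\imath_Y\otimes\varepsilon_Y$ to both sides. The counit identity $(\imath_Y\otimes\varepsilon_Y)(\Delta_Y(z)(1\otimes y'))=z\,\varepsilon_Y(y')$ collapses the left-hand side to $T(y\otimes a)$, while the elementary tensor identity
$$(\imath_A\otimes\imath_Y\otimes\varepsilon_Y)\circ(T\otimes\imath_Y)=T\circ(\imath_Y\otimes\imath_A\otimes\varepsilon_Y)$$
(valid on $Y\otimes A\otimes Y$ since $\varepsilon_Y$ takes scalar values) reduces the right-hand side to $T(V)$, where
$$V:=(\imath_Y\otimes\imath_A\otimes\varepsilon_Y)\bigl((\imath_Y\otimes T)(\Delta_Y(y)\otimes a)(1\otimes 1\otimes y')\bigr)\in Y\otimes A.$$
Using the bijectivity of $T$ (see \cite{Lydia}) this elevates $T(y\otimes a)=T(V)$ to the equality $y\otimes a=V$ in $Y\otimes A$.

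Next I would apply $\varepsilon_Y\otimes\imath_A$ to $y\otimes a=V$. The left-hand side is then $\varepsilon_Y(y)\,a$. For the right-hand side, I rely on $\varepsilon_Y\otimes\imath_{A\otimes Y}$ being a nondegenerate algebra homomorphism, which distributes over the internal product $(\imath_Y\otimes T)(\Delta_Y(y)\otimes a)\cdot(1\otimes 1\otimes y')$; coupled with $(\varepsilon_Y\otimes\imath_Y)\Delta_Y=\imath_Y$, this evaluates $(\varepsilon_Y\otimes\imath_A\otimes\imath_Y)$ of the covered expression to $\rho(y)(a\otimes 1)(1\otimes y')=\rho(y)(a\otimes y')$. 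A final application of $\imath_A\otimes\varepsilon_Y$, together with $\varepsilon_Y(y')=1$, yields $(\imath_A\otimes\varepsilon_Y)(\rho(y)(a\otimes 1))$, producing $\varepsilon_Y(y)\,a=(\imath_A\otimes\varepsilon_Y)(\rho(y)(a\otimes 1))$.

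The main obstacle is the careful bookkeeping with multipliers: since $\Delta_Y(y)\in M(Y\otimes Y)$ and $\rho(y)\in M(A\otimes Y)$, every Sigma-style expression has to be anchored by an explicit covering, and the counit identities must be invoked in their extended (multiplier-algebra) form. Once these are set up, the two applications of $\varepsilon_Y$---first to the $Y$-factor produced by $\Delta_Y$, then to the $Y$-factor coming from $\rho$---execute cleanly, and the bijectivity of $T$ is what transmutes an equality of elements in $A\otimes Y$ into the equality in $Y\otimes A$ needed to extract the counit-type relation.
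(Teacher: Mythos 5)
Your argument is correct. The paper itself gives no proof of this proposition (it is quoted from \cite{Lydia}), and your route --- cover axiom (ii) of Definition \ref{comodcoalglob} by $(1\otimes 1\otimes y')$ with $\varepsilon_Y(y')=1$, apply $\imath_A\otimes\imath_Y\otimes\varepsilon_Y$, cancel $T$ using its bijectivity in the global case (which the paper explicitly relies on, and which fails only in the partial setting), and finish with $\varepsilon_Y\otimes\imath_A$ --- is precisely the standard argument of the cited source; the multiplier-covering manipulations you invoke, such as $(\varepsilon_Y\otimes T)(\Delta_Y(y)\otimes a)=T(y\otimes a)$, are the same ones the paper uses elsewhere (e.g.\ in the proof that every comodule coalgebra is a symmetric partial comodule coalgebra).
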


Below, the theorem demonstrated by L. Delvaux in \cite{Lydia} is presented regarding the construction of the smash coproduct.  

\begin{thm}\textnormal{\cite{Lydia}}
	Let $Y$ and $A$  be regular multiplier Hopf algebras such that $A$ is commutative. If $Y$ is a left $A$-comodule coalgebra via a homomorphism $\rho$, then $\overline{\Delta}$ is a  homomorphism on $Y\otimes A$ such that $(Y\otimes A,\overline{\Delta})$ is a regular multiplier Hopf algebra.
	\begin{proof}
It is sufficient to define the comultiplication $\overline{\Delta}$ on $Y\otimes A$ by  
\begin{eqnarray*}
	\overline{\Delta}(y\otimes a)((y'\otimes a')\otimes (y''\otimes a''))&=& (((\imath_Y\otimes T)(\Delta_Y(y)\otimes a_{(1)}))(1^2\otimes y'')\otimes a_{(2)}a'')((y'\otimes a')\otimes(1^2)),\\
	((y'\otimes a')\otimes (y''\otimes a''))\overline{\Delta}(y\otimes a)&=& ((1^2)\otimes(y''\otimes a''))(y'y_{(1)}\otimes(a'\otimes1^2)((T\otimes\imath_A)(y_{(2)}\otimes\Delta_A(a)))),
\end{eqnarray*}
for all $y\in Y$, $a\in A$, $y',y''\in Y$ and $a',a''\in A$.
	\end{proof}
	
\end{thm}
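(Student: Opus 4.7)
The plan is to verify each axiom for a regular multiplier Hopf algebra on $(Y\otimes A,\overline{\Delta})$, leveraging the multiplier Hopf algebra structures on $Y$ and $A$ together with the comodule-coalgebra hypothesis.

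First I would check that the two expressions given in the statement genuinely determine a well-defined element $\overline{\Delta}(y\otimes a)\in M((Y\otimes A)\otimes (Y\otimes A))$. The right-hand side of the first formula lies in $(Y\otimes A)\otimes (Y\otimes A)$ because the covering on the third tensor leg is supplied by $1^2\otimes y''$ through Definition \ref{comodcoalglob}(i), the covering on the first $A$-leg by $y'\otimes a'$ through Proposition \ref{prop.1.5Lydia}, while $\Delta_Y(y)$ and $\Delta_A(a)$ are themselves multipliers with the appropriate covering properties. The mirror argument handles the second formula, and compatibility of the two formulas (so that they assemble into a single multiplier) reduces to coassociativity of $\Delta_Y$ together with the equivalent form of coassociativity of $\rho$ given by Proposition \ref{paraequiv}(ii).

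The central verification is that $\overline{\Delta}$ is a nondegenerate algebra homomorphism. Expanding both $\overline{\Delta}((y\otimes a)(y'\otimes a'))$ and $\overline{\Delta}(y\otimes a)\overline{\Delta}(y'\otimes a')$ on a generic test element, and using that $\rho$ is an algebra map while $\Delta_Y$ and $\Delta_A$ are homomorphisms, the two sides coincide after commuting certain $A$-factors past one another; this is precisely the step where the commutativity of $A$ is essential. Coassociativity of $\overline{\Delta}$ then reduces to coassociativity of $\Delta_Y$, of $\Delta_A$, and of $\rho$ (invoked via Proposition \ref{paraequiv}(ii)), and counitality is witnessed by $\varepsilon_Y\otimes\varepsilon_A$ using Proposition \ref{paracaracterizacao} on the $Y$-leg and counitality of $\Delta_A$ on the $A$-leg.

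To promote the resulting multiplier bialgebra to a multiplier Hopf algebra, one verifies bijectivity of the canonical maps $\overline{T}_1(u\otimes v)=\overline{\Delta}(u)(1\otimes v)$ and $\overline{T}_2(u\otimes v)=(u\otimes 1)\overline{\Delta}(v)$ on $(Y\otimes A)\otimes (Y\otimes A)$. Their inverses can be built by composing the inverses of the analogous canonical maps for $Y$ and for $A$ (available since $Y$ and $A$ are multiplier Hopf algebras) with the inverse of the map $T$ from \eqref{defdaaplT}. Regularity of $(Y\otimes A,\overline{\Delta})$ is then obtained by rerunning the same argument after replacing $\Delta_Y$ and $\Delta_A$ by their opposites $\sigma\Delta_Y$ and $\sigma\Delta_A$, which is legitimate because both $Y$ and $A$ are themselves regular. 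The main obstacle will be the algebra-homomorphism check: it requires careful covering bookkeeping in the completions $M_0$ while tracking the Sweedler indices of $a$, $a'$, $y$ and $y'$, and it only closes because $A$ is commutative, so the argument must invoke commutativity at exactly the right moment after the $A$-indices produced by $\Delta_A(a)$ and $\Delta_A(a')$ have been interleaved with the coaction on the $Y$-factor.
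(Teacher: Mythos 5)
Your outline is correct and follows essentially the same route as the source: the paper itself records only the defining formulas for $\overline{\Delta}$ (via $\overline{T}_1$ and $\overline{T}_2$) and defers all verifications to Delvaux's paper, and the checks you list (well-definedness of the multiplier via the coverings in Definition \ref{comodcoalglob}(i) and Proposition \ref{prop.1.5Lydia}, the homomorphism property hinging on commutativity of $A$, coassociativity via Proposition \ref{paraequiv}(ii), counitality, bijectivity of the canonical maps using $T^{-1}$, and regularity by passing to the opposite comultiplications) are exactly the ones carried out there and mirrored in the paper's own partial-case analogues (Propositions \ref{defDelta} and \ref{Counidadeequerda}, Theorem \ref{thmultiplierbialg}). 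Two small slips worth noting: the compatibility of the two one-sided formulas into a single multiplier is a consequence of associativity of the product in $(Y\otimes A)\otimes(Y\otimes A)$ rather than of coassociativity, and the left-counit identity uses Proposition \ref{epsiloncoacao} while only the right-counit identity needs Proposition \ref{paracaracterizacao}.
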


\section{Partial Comodule Coalgebra}

\subsection{Partial Comodule Coalgebra}

In this section, we present the definition of a left partial  comodule coalgebra in the context of multiplier Hopf algebras. We show that this definition generalizes the Hopf case for partial comodule coalgebras presented in \cite {Batista}, and the global case of the multiplier Hopf algebras in \cite {Lydia}. Throughout this section, we assume that $Y$ and $A$ are regular multiplier Hopf algebras and the partial $A$-comodule coalgebras will be consider on the left. The partial comodule coalgebras on the right are defined similarly.

\begin{defi} \label{part_comod_coal}\cite{Batista} Let $A$ be a Hopf algebra and let $Y$ be a coalgebra. We say that $Y$ is a \textit{partial $A$-comodule coalgebra} if there exists a linear map $\rho: Y \longrightarrow  A\otimes Y$ such that
	\begin{enumerate}
		\item [(i)] $(\varepsilon_A\otimes \imath_Y)\rho=\imath_Y$
		\item [(ii)] $(\imath_A\otimes\Delta_Y)\rho=(m_A\otimes\imath_Y\otimes\imath_Y)(\imath_A\otimes \tau_{Y,A}\otimes \imath_Y)(\rho\otimes\rho)\Delta_Y$
		\item[(iii)] $(\imath_A\otimes\rho)\rho=(m_A\otimes\imath_A\otimes\imath_Y)\{(\imath_A\otimes\varepsilon_Y)\rho\otimes[(\Delta_A\otimes\imath_Y)\rho]\}\Delta_Y$,
	\end{enumerate}
where $\tau_{Y,A}$ is the twist map. Furthermore, we call $Y$ a \textit{symmetric partial $A$-comodule coalgebra} if the following additional condition also holds:
\begin{enumerate}
	\item[(iv)]$(\imath_A\otimes\rho)\rho=(m_A\otimes\imath_A\otimes\imath_Y)(\imath_A\otimes\tau_{A\otimes Y,A})\{[(\Delta_A\otimes\imath_Y)\rho]\otimes(\imath_A\otimes\varepsilon_Y)\rho\}\Delta_Y$.
	\end{enumerate}
\end{defi}

 This notion can be extended to the context of regular multiplier Hopf algebra as follows.
\begin{defi}\label{defcomodcopar} We say that $Y$ is a \textit{partial $A$-comodule coalgebra} if there exists a linear map
	\begin{eqnarray*}
		\rho: Y & \longrightarrow & M(A\otimes Y)
	\end{eqnarray*}
	such that, for all $y, y'\in Y$, $a\in A$,
	\begin{itemize}
		\item[(i)] $(\varepsilon_A\otimes \imath_Y)\rho(y)=y$
		\item[(ii)] $((\imath_Y\otimes T)(\Delta_Y(y)\otimes a))(1\otimes 1\otimes y')\in Y\otimes A\otimes Y$ and $(1\otimes 1\otimes y')((\imath_Y\otimes T)(\Delta_Y(y)\otimes a))\in Y\otimes A\otimes Y$
		
		\item[(iii)] $(\imath_A\otimes\Delta_Y)T=(T\otimes \imath_Y)(\imath_Y\otimes T)(\Delta_Y\otimes\imath_A)$
		
		\item[(iv)]$(\imath_A\otimes T)(T\otimes\imath_A)(\imath_Y\otimes\Delta_A)=((\imath_A\otimes\varepsilon_Y)T\otimes\imath_A\otimes\imath_Y)(\imath_Y\otimes(\Delta_A\otimes\imath_Y)T)(\Delta_Y\otimes\imath_A)$,
	\end{itemize}
	where $	T :Y\otimes A  \longrightarrow A\otimes Y$	is given by $T(y\otimes a)=\rho(y)(a\otimes 1)\in A\otimes Y.$
	
	Moreover, $Y$ is a \textit{symmetric partial $A$-comodule coalgebra} if the following additional conditions hold
	\begin{enumerate}    	
		\item[(v)]$(\overline{T}\otimes\imath_Y)(a\otimes\Delta_Y(y))(1\otimes y'\otimes 1)\in A\otimes Y\otimes Y$
		
		\item[(vi)]$(\imath_A\otimes\overline{T})(\imath_A\otimes\tau_{Y\otimes A})(\overline{T}\otimes\imath_A)(\imath_A\otimes\tau_{A\otimes Y})(\Delta_A\otimes\imath_Y)=((\imath_A\otimes\varepsilon_Y)\overline{T}\otimes\imath_A\otimes\imath_Y)(\imath_A\otimes\tau_{A\otimes Y,Y})((\Delta_A\otimes\imath_Y)\overline{T}\otimes\imath_Y)(\imath_A\otimes\Delta_Y),$
	\end{enumerate}
	where $\overline{T} :A\otimes Y  \longrightarrow A\otimes Y$
	is given by $\overline{T}(a\otimes y)=(a\otimes 1)\rho(y)\in A\otimes Y$.
\end{defi} 	
 	\begin{rem}\label{trocaLydiaparcial} Let $Y$ be a partial $A$-comodule coalgebra via $\rho$. Note that 
 		$(T\otimes\imath_A)(y\otimes\Delta_A(a))\in M_{0,1}^A(A\otimes Y\otimes A),$ considering
 		\begin{eqnarray*}
 			((T\otimes\imath_A)(y\otimes\Delta_A(a)))(a'\otimes 1\otimes1)&=&\sum y^{(-1)}a_{(1)}a'\otimes y^{(0)}\otimes a_{(2)},
 		\end{eqnarray*} 
 		where $a_{(1)}$ is covered by $a'$ and $y^{(-1)}$ is covered by $a_{(1)}a',$ and
 		\begin{eqnarray*}
 			(a'\otimes 1\otimes1)((T\otimes\imath_A)(y\otimes\Delta_A(a)))&=&\sum a'y^{(-1)}a_{(1)}\otimes y^{(0)}\otimes a_{(2)},
 		\end{eqnarray*} 
 		where $y^{(-1)}$ is covered by $a'$ and $a_{(1)}$ is covered by $a'y^{(-1)}.$
 	\end{rem}
\begin{rem}
	\begin{enumerate} 
		\item [(1)] In Definition \ref{defcomodcopar} item (ii) is necessary to give sense to (iii). First, we can rewrite item (iii), for every, $y\in Y$, $a\in A$, $y'\in Y$, as follows:
		\begin{eqnarray}\label{justifitemii}
		y^{(-1)}a\otimes\Delta_Y(y^{(0)})(1\otimes y')=(T\otimes\imath_Y)((\imath_Y\otimes T)(\Delta_Y(y)\otimes a))(1\otimes 1\otimes y').
	\end{eqnarray}
	Thus, using item (ii), denoting $z=(\imath_Y\otimes T)(\Delta_Y(y)\otimes a),$ we can define the multiplier $(T\otimes \imath_Y)z\in M_{0,3}^Y(A\otimes Y\otimes Y)$ as
	\begin{eqnarray*}
		\overline{(T\otimes\imath_Y)(z)}(y')=(T\otimes\imath_Y)(z)(1\otimes 1\otimes y')
		= (T\otimes\imath_Y)(z(1\otimes 1\otimes y'))\\
		\overline{\overline{(T\otimes\imath_Y)(z)}}(y')=(1\otimes 1\otimes y')(T\otimes\imath_Y)(z)
		= (T\otimes\imath_Y)((1\otimes 1\otimes y')z).
	\end{eqnarray*}     		
	
	
	Therefore, the right side of  (\ref{justifitemii}) can be seen as
	\begin{eqnarray}\label{comppar2}
		(T\otimes\imath_Y)((\imath_Y\otimes T)(\Delta_Y(y)\otimes a))(1\otimes 1\otimes y')=(T\otimes\imath_Y)((\imath_Y\otimes T)(\Delta_Y(y)\otimes a)(1\otimes 1\otimes y')).
	\end{eqnarray}
	\ \
	\ \
\item[(2)]
 $(T\otimes\imath_A )(y\otimes\Delta_A(a))$  is a multiplier in $M_{0,3}^A(A\otimes Y\otimes A)$ for all $y\in Y$, $a\in A$, defined by 
	\begin{eqnarray*}
		\overline{(T\otimes\imath_A)(y\otimes\Delta_A(a))}(1\otimes 1\otimes a')&=&(T\otimes\imath_A)(y\otimes\Delta_A(a))(1\otimes 1\otimes a')\\
		&=& (T\otimes\imath_A)(y\otimes\Delta_A(a)(1\otimes a')),\\     	
		\overline{\overline{(T\otimes\imath_A)(y\otimes\Delta_A(a))}}(1\otimes 1\otimes a')&=&(1\otimes 1\otimes a')(T\otimes\imath_A)(y\otimes\Delta_A(a))\\
		&=&(T\otimes\imath_A)(y\otimes(1\otimes a')\Delta_A(a)).
	\end{eqnarray*}
	
	Similarly, we define the multiplier $(\imath_A\otimes T)((T\otimes\imath_A)(y\otimes\Delta_A(a)))\in M_{0,1}^A(A\otimes A\otimes Y).$
\item[(3)] For every $a'\in A$, $y\in Y$, 
	\begin{eqnarray}\label{defparobs3}\label{compondopar}
		[(\imath_A\otimes T)(T\otimes\imath_A)(y\otimes\Delta_A(a))](1\otimes a'\otimes 1)=(\imath_A\otimes T)((T\otimes\imath_A)(y\otimes\Delta_A(a))(1\otimes 1\otimes a')).
	\end{eqnarray}
	Indeed, for all $a''\in A$,
	\begin{eqnarray*}
		[(\imath_A\otimes T)(T\otimes\imath_A)(y\otimes\Delta_A(a))](a''\otimes a'\otimes 1)&=&(\imath_A\otimes T)[(T\otimes\imath_A)(y\otimes\Delta_A(a))(a''\otimes 1\otimes 1)](1\otimes a'\otimes 1)\\
		&\stackrel{\ref{trocaLydiaparcial}}{=}&((\imath_A\otimes T)(y^{(-1)}a_{(1)}a''\otimes y^{(0)}\otimes a_{(2)}))(1\otimes a'\otimes 1)\\
		&=&y^{(-1)}a_{(1)}a''\otimes y^{(0)(-1)}a_{(2)}a'\otimes y^{(0)(0)}\\
		&=&(\imath_A\otimes T)((T\otimes\imath_A)(y\otimes\Delta_A(a)(1\otimes a')(a''\otimes 1)))\\
		&=&(\imath_A\otimes T)((T\otimes\imath_A)(y\otimes\Delta_A(a)(1\otimes a')))(a''\otimes 1\otimes 1)\\
		&=&[(\imath_A\otimes T)((T\otimes\imath_A)(y\otimes\Delta_A(a))(1\otimes1\otimes a'))](a''\otimes 1\otimes 1). 
	\end{eqnarray*}
	
	     	
	     	\ \
	     	\ \
	
\item[(4)] Item (iv) by Definition \ref{defcomodcopar} can be written explicitly covering both sides by $(1\otimes a'\otimes y')$, $a'\in A$, $y'\in Y$, as follows:
	\begin{eqnarray}\label{justifitemiv}\label{reescreve}
		y^{(-1)}a_{(1)}\otimes y^{(0)(-1)}a_{(2)}a'\otimes y^{(0)(0)}y'
		= x^{(-1)}b_{(1)}\varepsilon_Y(x^{(0)})\otimes b_{(2)}a'\otimes w,
	\end{eqnarray}
	where $(\imath_Y\otimes T)(\Delta_Y(y)\otimes a)(1^2\otimes y')=x\otimes b\otimes w$, by item (ii).
	In fact,
	\begin{eqnarray*}
		y^{(-1)}a_{(1)}\otimes y^{(0)(-1)}a_{(2)}a'\otimes y^{(0)(0)}y'&=&(\imath_A\otimes T)[(T\otimes\imath_A)(y\otimes\Delta_A(a)(1\otimes a'))](1\otimes 1\otimes y')\\
		&=&(\imath_A\otimes T)[(T\otimes\imath_A)(y\otimes\Delta_A(a))(1\otimes 1\otimes a')](1\otimes 1\otimes y')\\
		&\stackrel{(\ref{defparobs3})}{=}&[(\imath_A\otimes T)(T\otimes\imath_A)(y\otimes\Delta_A(a))](1\otimes a'\otimes y')\\
		&\stackrel{\ref{defcomodcopar}(iv)}{=}&((\imath_A\otimes\varepsilon_Y)T\otimes\imath_A\otimes\imath_Y)(\imath_Y\otimes(\Delta_A\otimes\imath_Y)T)(\Delta_Y(y)\otimes a)(1\otimes a'\otimes y')\\
		&=&((\imath_A\otimes\varepsilon_Y)T\otimes\imath_A\otimes\imath_Y)\\
		&&((\imath_Y\otimes\Delta_A\otimes\imath_Y)[(\imath_Y\otimes T)(\Delta_Y(y)\otimes a)(1^2\otimes y')](1^2\otimes a'\otimes1))\\
		&\stackrel{\ref{defcomodcopar} (ii)}{=}&((\imath_A\otimes\varepsilon_Y)T\otimes\imath_A\otimes\imath_Y)((x\otimes\Delta_A(b)\otimes w)(1\otimes1\otimes a'\otimes1))\\
		&=&((\imath_A\otimes\varepsilon_Y)T\otimes\imath_A\otimes\imath_Y)(x\otimes\Delta_A(b)(1\otimes a')\otimes w)\\
		&=&x^{(-1)}b_{(1)}\varepsilon_Y(x^{(0)})\otimes b_{(2)}a'\otimes w. 			
	\end{eqnarray*}

\end{enumerate}
\end{rem}

The next Lemma gives us an equivalence similar to the result presented in Proposition \ref{paraequiv} item (ii).

\begin{lem} Let $Y$ be a symmetric partial $A$-comodule coalgebra via $\rho$. Then the statements below are equivalent:
	\begin{enumerate}
		\item[(i)]$(\imath_A\otimes\rho)\rho=(\Delta_A\otimes\imath_Y)\rho;$
		\item[(ii)]$(\imath_A\otimes\overline{T})(\imath_A\otimes\tau_{Y\otimes A})(\overline{T}\otimes\imath_A)(\imath_A\otimes\tau_{A\otimes Y})(\Delta_A\otimes\imath_Y)=(\Delta_A\otimes\imath_Y)\overline{T}.$
	\end{enumerate}
\end{lem}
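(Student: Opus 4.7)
The plan is to evaluate each side of (ii) at a generic element $a\otimes y\in A\otimes Y$ using Sweedler notation, so that both collapse to $(\Delta_A(a)\otimes 1)$ times the corresponding side of (i); the equivalence then reduces to stripping this common factor, which is possible by the regularity of $A$.

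First, I would unfold the LHS of (ii) at $a\otimes y$. Applying $(\Delta_A\otimes\imath_Y)$ yields $a_{(1)}\otimes a_{(2)}\otimes y$, the twists rearrange factors, and two applications of $\overline{T}(c\otimes z)=(c\otimes 1)\rho(z)$ give
\begin{align*}
(\imath_A\otimes\overline{T})(\imath_A\otimes\tau_{Y\otimes A})(\overline{T}\otimes\imath_A)(\imath_A\otimes\tau_{A\otimes Y})(\Delta_A\otimes\imath_Y)(a\otimes y)
&= a_{(1)}y^{(-1)}\otimes a_{(2)}y^{(0)(-1)}\otimes y^{(0)(0)} \\
&= (\Delta_A(a)\otimes 1)\,(\imath_A\otimes\rho)(\rho(y)),
\end{align*}
with the coverings treated as in Remark \ref{trocaLydiaparcial}. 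For the RHS, using that $\Delta_A$ is an algebra homomorphism,
$(\Delta_A\otimes\imath_Y)\overline{T}(a\otimes y)=\Delta_A(ay^{(-1)})\otimes y^{(0)}=(\Delta_A(a)\otimes 1)(\Delta_A\otimes\imath_Y)(\rho(y))$. Hence (ii) is equivalent to the identity $(\Delta_A(a)\otimes 1)(\imath_A\otimes\rho)(\rho(y))=(\Delta_A(a)\otimes 1)(\Delta_A\otimes\imath_Y)(\rho(y))$ holding for all $a\in A$ and $y\in Y$.

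The implication (i)$\Rightarrow$(ii) is then immediate by left multiplication. For the converse, I would use that $A$ is a regular multiplier Hopf algebra: bijectivity of $T_2$ gives $(A\otimes 1)\Delta_A(A)=A\otimes A$. Left multiplying the displayed identity by $(c\otimes 1\otimes 1)$ and letting $a,c$ range, the elements $((c\otimes 1)\Delta_A(a)\otimes 1)$ exhaust $A\otimes A\otimes 1$; by nondegeneracy of $A\otimes A\otimes Y$ and the compatibility between the left and right multiplier components, this forces $(\imath_A\otimes\rho)\rho(y)=(\Delta_A\otimes\imath_Y)\rho(y)$ in $M(A\otimes A\otimes Y)$, which is (i). The main technical subtlety will be the covering bookkeeping in the unfolding step, since $\Delta_A(a)$ is a priori only a multiplier in $M(A\otimes A)$; I would address this by premultiplying by a covering element $(c\otimes 1\otimes 1)$ from the start so that $(c\otimes 1)\Delta_A(a)\in A\otimes A$, keeping every intermediate tensor in the algebraic tensor product in the style of the earlier remarks in this section.
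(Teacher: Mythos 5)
Your proposal is correct and follows essentially the same route as the paper: both sides of (ii) are unfolded to $(\Delta_A(a)\otimes 1)$ times the corresponding side of (i), with coverings by elements $(b\otimes 1\otimes 1)$ and $(b'\otimes 1\otimes 1)$ handling the multiplier issues, and the converse uses exactly the fact that $(A\otimes 1)\Delta_A(A)$ spans $A\otimes A$ (surjectivity of $T_2$). No gaps.
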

\begin{proof} Suppose that $(\imath_A\otimes\rho)\rho=(\Delta_A\otimes\imath_Y)\rho.$ Then, given $a,b,b'\in A$, $y\in Y$, 
	\begin{eqnarray*}	
		& \ &(b\otimes1\otimes1)(\Delta_A\otimes\imath_Y)\overline{T}(a\otimes y)(b'\otimes1\otimes1)\\
		&=&(b\otimes1\otimes1)(\Delta_A\otimes\imath_Y)((a\otimes1)\rho(y))(b'\otimes1\otimes1)\\
		&=&((b\otimes1)\Delta_A(a)\otimes1)(\Delta_A\otimes\imath_Y)\rho(y)(b'\otimes1\otimes1)\\		
		&=&(ba_{(1)}\otimes a_{(2)}\otimes 1)(\imath_A\otimes\rho)\rho(y)(b'\otimes1\otimes1)\\
		&=&ba_{(1)}y^{(-1)}b'\otimes(a_{(2)}\otimes1)\rho(y^{(0)})\\
		&=&ba_{(1)}y^{(-1)}b'\otimes\overline{T}(a_{(2)}\otimes y^{(2)})\\
		&=&(\imath_A\otimes\overline{T})(\imath_A\otimes\tau_{Y\otimes A})((ba_{(1)}\otimes1)(\rho(y)(b'\otimes1))\otimes a_{(2)})\\
		&=&(\imath_A\otimes\overline{T})(\imath_A\otimes\tau_{Y\otimes A})((\overline{T}(ba_{(1)}\otimes y)\otimes a_{(2)})(b'\otimes1\otimes1))\\
		&=&((\imath_A\otimes\overline{T})(\imath_A\otimes\tau_{Y\otimes A})(\overline{T}\otimes\imath_A)(ba_{(1)}\otimes y\otimes a_{(2)}))(b'\otimes1\otimes1)\\
		&=&((\imath_A\otimes\overline{T})(\imath_A\otimes\tau_{Y\otimes A})(\overline{T}\otimes\imath_A)(\imath_A\otimes\tau_{A\otimes Y})(ba_{(1)}\otimes a_{(2)}\otimes y))(b'\otimes1\otimes1)\\
		&=&((\imath_A\otimes\overline{T})(\imath_A\otimes\tau_{Y\otimes A})(\overline{T}\otimes\imath_A)(\imath_A\otimes\tau_{A\otimes Y})((b\otimes1)\Delta_A(a)\otimes y))(b'\otimes1\otimes1)\\
		&=&(b\otimes1\otimes1)((\imath_A\otimes\overline{T})(\imath_A\otimes\tau_{Y\otimes A})(\overline{T}\otimes\imath_A)(\imath_A\otimes\tau_{A\otimes Y})(\Delta_A\otimes\imath_Y)(a\otimes y))(b'\otimes1\otimes1).
	\end{eqnarray*}
	
Conversely,
	\begin{eqnarray*}
		& \ &	((b\otimes1)\Delta_A(a)\otimes1)(\imath_A\otimes\rho)\rho(y)(b'\otimes1\otimes1)\\
		&=&((b\otimes1)\Delta_A(a)\otimes1)(\imath_A\otimes\rho)(\rho(y)(b'\otimes1))\\
		&=&ba_{(1)}y^{(-1)}b'\otimes \overline{T}(a_{(2)}\otimes y^{(0)})\\
		&=&(\imath_A\otimes\overline{T})(\imath_A\otimes\tau_{Y\otimes A})((ba_{(1)}\otimes1)(\rho(y)(b'\otimes1))\otimes a_{(2)})\\
		&=&((\imath_A\otimes\overline{T})(\imath_A\otimes\tau_{Y\otimes A})(\overline{T}\otimes\imath_A)(\imath_A\otimes\tau_{A\otimes Y})((b\otimes1)\Delta_A(a)\otimes y))(b'\otimes1\otimes1)\\
		&=&(b\otimes1\otimes1)((\imath_A\otimes\overline{T})(\imath_A\otimes\tau_{Y\otimes A})(\overline{T}\otimes\imath_A)(\imath_A\otimes\tau_{A\otimes Y})(\Delta_A\otimes\imath_Y)(a\otimes y))(b'\otimes1\otimes1)\\
		&=&(b\otimes1\otimes1)(\Delta_A\otimes\imath_Y)\overline{T}(a\otimes y)(b'\otimes1\otimes1)\\
		&=&(b\otimes1\otimes1)(\Delta_A\otimes\imath_Y)((a\otimes1)\rho(y))(b'\otimes1\otimes1)\\
		&=&((b\otimes1)\Delta_A(a)\otimes1)(\Delta_A\otimes\imath_Y)\rho(y)(b'\otimes1\otimes1),
	\end{eqnarray*}
	for all $a,b,b'\in A$, $y\in Y.$ Since $(A\otimes1)\Delta_A(A)$ generates $A\otimes A$, item (i) holds.  

\end{proof}

\begin{pro}Every comodule coalgebra is a symmetric partial comodule coalgebra. 
\end{pro}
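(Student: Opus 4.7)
The plan is to verify the six conditions (i)--(vi) of Definition \ref{defcomodcopar} directly, taking as starting hypotheses the global coassociativity $(\imath_A\otimes\rho)\rho=(\Delta_A\otimes\imath_Y)\rho$ and the two items of Definition \ref{comodcoalglob}. Conditions (i), (ii), (iii) and (v) should fall out almost for free: (i) is exactly Proposition \ref{epsiloncoacao}; item (ii) of Definition \ref{defcomodcopar} coincides word-for-word with condition (i) of Definition \ref{comodcoalglob}; item (iii) of Definition \ref{defcomodcopar} is condition (ii) of Definition \ref{comodcoalglob}; and (v) follows from the global covering $(A\otimes 1)\rho(Y)\subseteq A\otimes Y$ together with the fact that $\Delta_Y(y)$ is covered on either side by elements of $Y$, interpreted through the multiplier conventions of Remark \ref{extrho}.

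The substantive work is the verification of (iv) and (vi). For (iv), the first move is to apply Proposition \ref{paraequiv}(ii): the assumed global coassociativity rewrites the left-hand side as
\[
(\Delta_A\otimes\imath_Y)T(y\otimes a)=\Delta_A(y^{(-1)}a)\otimes y^{(0)},
\]
so it suffices to show that the right-hand side of (iv) also equals this expression. Unpacking the right-hand side step by step on $\Delta_Y(y)\otimes a$, and invoking Proposition \ref{paracaracterizacao} to simplify $(\imath_A\otimes\varepsilon_Y)T$ to $\varepsilon_Y(y_{(1)})(\,\cdot\,)$, one arrives at
\[
\varepsilon_Y(y_{(1)})\,(y_{(2)}^{(-1)}a)_{(1)}\otimes(y_{(2)}^{(-1)}a)_{(2)}\otimes y_{(2)}^{(0)}.
\]
The final collapse combines the counit identity $\varepsilon_Y(y_{(1)})\,y_{(2)}=y$ with the linearity of $\rho$: together they turn $\varepsilon_Y(y_{(1)})\,\rho(y_{(2)})(a\otimes 1)$ into $\rho(y)(a\otimes 1)$, after which $\Delta_A\otimes\imath_Y$ recovers the desired left-hand side.

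The verification of (vi) is the mirror argument with $\overline{T}$ in place of $T$: its left-hand side equals $(\Delta_A\otimes\imath_Y)\overline{T}$ by the Lemma immediately preceding this proposition, and the $\overline{T}$-analogue $(\imath_A\otimes\varepsilon_Y)\overline{T}(a\otimes y)=a\,\varepsilon_Y(y)$ of Proposition \ref{paracaracterizacao} allows the right-hand side to be collapsed by the counit of $Y$ in exactly the same fashion as for (iv).

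The main obstacle I anticipate is bookkeeping rather than conceptual content: every expression in Definition \ref{defcomodcopar} is a multiplier, so each step must be justified by first multiplying against enough covering factors (typically of the form $(1\otimes a'\otimes y')$) to land in a genuine tensor product, in the spirit of the computation displayed in $(\ref{reescreve})$. Particular care is needed to keep the applications of $T$ and $\overline{T}$ inside nested expressions consistent with the covering conventions of Remark \ref{trocaLydiaparcial}, but no new ideas beyond those above should be required.
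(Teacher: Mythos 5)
Your proposal is correct and follows essentially the same route as the paper: items (i)--(iii) are immediate from Proposition \ref{epsiloncoacao} and Definition \ref{comodcoalglob}, and item (iv) is settled by combining Proposition \ref{paracaracterizacao}, the counit identity $(\varepsilon_Y\otimes\imath_Y)\Delta_Y=\imath_Y$, and Proposition \ref{paraequiv}(ii), exactly as in the paper (which runs the same chain of equalities from the right-hand side toward the left rather than meeting in the middle). The paper likewise dispatches the symmetric conditions with ``holds similarly,'' so your sketch for (v)--(vi) is consistent with its level of detail.
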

\begin{proof}
	Assume that $Y$ is an $A$-comodule coalgebra via $\rho$. By Proposition \ref{epsiloncoacao} we have that $(\varepsilon_A\otimes\imath_Y)\rho=\imath_Y,$ \textit i.e., item (i) of Definition \ref{defcomodcopar} holds. The items (ii) and (iii) follow immediately from Definition \ref{comodcoalglob}. Thus it is enough to show item (iv). Indeed,	 		
	\begin{eqnarray*}
		& \ &[((\imath_A\otimes\varepsilon_Y)T\otimes\imath_A\otimes\imath_Y)(\imath_Y\otimes(\Delta_A\otimes\imath_Y)T)(\Delta_Y(y)\otimes a)](1\otimes a'\otimes y')\\
		&\stackrel{(\ref{reescreve})}{=}&x^{(-1)}b_{(1)}\otimes\varepsilon_Y(x^{(0)})\otimes b_{(2)}a'\otimes w\\
		&\stackrel{\ref{paracaracterizacao}}{=}&b_{(1)}\varepsilon_Y(x)\otimes b_{(2)}a'\otimes w\\
		&=&(\varepsilon_Y\otimes\imath_A\otimes\imath_A\otimes\imath_Y)(x\otimes \Delta_A(b)(1\otimes a')\otimes w)\\
		&=&(\varepsilon_Y\otimes\Delta_A\otimes\imath_Y)(x\otimes b\otimes w)(1\otimes a'\otimes 1)\\
		&=&(\varepsilon_Y\otimes\Delta_A\otimes\imath_Y)((\imath_Y\otimes T)(\Delta_Y(y)\otimes a)(1\otimes1\otimes y'))(1\otimes a'\otimes 1)\\
		&=&(\varepsilon_Y\otimes\Delta_A\otimes\imath_Y)((\imath_Y\otimes T)(\Delta_Y(y)\otimes a))(1\otimes a'\otimes y'))\\
		&=&(\Delta_A\otimes\imath_Y)((\varepsilon_Y\otimes T)(\Delta_Y(y)\otimes a))(1\otimes a'\otimes y')\\
		&=&(\Delta_A\otimes\imath_Y)(T((\varepsilon_Y\otimes\imath_Y)\Delta_Y(y)\otimes a))(1\otimes a'\otimes y')\\
		&=&(\Delta_A\otimes\imath_Y)(T(y\otimes a))(1\otimes a'\otimes y')\\
		&\stackrel{\ref{paraequiv}}{=}&(\imath_A\otimes T)(T\otimes\imath_A)(y\otimes\Delta_A(a))(1\otimes a'\otimes y'),	 		
	\end{eqnarray*}
	for all $a,a'\in A$, $y'\in Y$. The symmetric condition holds similarly. 	 	 	
\end{proof}

\begin{pro}Let $Y$ be a symmetric partial $A$-comodule coalgebra via $\rho$. Then, $Y$ is a $A$-comodule coalgebra if and only if 
	\begin{eqnarray}\label{eqcaract}(\imath_A\otimes\varepsilon_Y)\rho(y)=\varepsilon_Y(y)1_{M(A)},
	\end{eqnarray}
for all $y\in Y.$
\end{pro}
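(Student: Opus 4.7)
The plan is to prove the two implications separately. For the forward direction, Proposition \ref{paracaracterizacao} states that for an $A$-comodule coalgebra $Y$, $(\imath_A\otimes\varepsilon_Y)(\rho(y)(a\otimes 1)) = \varepsilon_Y(y)a$ for all $a\in A$. This is exactly the statement that the multiplier $(\imath_A\otimes\varepsilon_Y)\rho(y)\in M(A)$ coincides with $\varepsilon_Y(y)\, 1_{M(A)}$, since two elements of $M(A)$ agree once they act identically on $A$.

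For the converse, I would first observe that a symmetric partial $A$-comodule coalgebra already carries all the ingredients required by Definition \ref{comodcoalglob} except the coassociativity of the coaction. Indeed, the map $\rho$ is injective because $(\varepsilon_A\otimes \imath_Y)\rho = \imath_Y$; the covering conditions $\rho(y)(a\otimes 1),\, (a\otimes 1)\rho(y)\in A\otimes Y$ are built into the well-definedness of $T$ and $\overline T$; and items (ii) and (iii) of Definition \ref{defcomodcopar} match items (i) and (ii) of Definition \ref{comodcoalglob}. So it suffices to upgrade axiom (iv) of Definition \ref{defcomodcopar} to the coassociativity identity $(\imath_A\otimes\rho)\rho = (\Delta_A\otimes\imath_Y)\rho$, which by Proposition \ref{paraequiv}(ii) is equivalent to $(\imath_A\otimes T)(T\otimes\imath_A)(\imath_Y\otimes\Delta_A) = (\Delta_A\otimes\imath_Y)T$.

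Starting from Definition \ref{defcomodcopar}(iv), hypothesis (\ref{eqcaract}) allows me to replace $(\imath_A\otimes\varepsilon_Y)T(z\otimes b)$ by $\varepsilon_Y(z)b$ inside the right-hand side. Once this substitution is made, the counit axiom $(\varepsilon_Y\otimes\imath_Y)\Delta_Y = \imath_Y$ collapses the outer coproduct of $Y$ that was introduced by $\Delta_Y\otimes \imath_A$, and the right-hand side reduces to $(\Delta_A\otimes\imath_Y)(\rho(y)(a\otimes 1)) = (\Delta_A\otimes\imath_Y)T(y\otimes a)$, precisely as required. The symmetric axiom (vi) is handled in the same fashion and yields the analogous identity for $\overline T$, so nothing new is needed for the global coassociativity.

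The main delicate point is the bookkeeping of coverings in the multiplier setting: every manipulation must be carried out after multiplication by a suitable covering element such as $1\otimes a'\otimes y'$, following the conventions laid out in Remark \ref{trocaLydiaparcial} and the subsequent expanded remark, and the resulting equalities in $A\otimes A\otimes Y$ then lifted to multiplier identities by invoking nondegeneracy of $A$ and $Y$. This is routine but must be tracked carefully; the conceptual content is simply that equation (\ref{eqcaract}) is exactly what is needed to turn the partial coassociativity axiom (iv) into the global coassociativity of the coaction.
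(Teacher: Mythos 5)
Your proposal is correct and follows essentially the same route as the paper: the forward direction via Proposition \ref{paracaracterizacao}, and the converse by noting injectivity of $\rho$ from axiom (i), then using (\ref{eqcaract}) together with the counit axiom of $Y$ to identify the right-hand side of Definition \ref{defcomodcopar}(iv) with $(\Delta_A\otimes\imath_Y)T$, and concluding by Proposition \ref{paraequiv}(ii). The paper runs the same chain of equalities in the opposite direction (from $(\Delta_A\otimes\imath_Y)T$ up to the right-hand side of (iv)), which is not a substantive difference.
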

\begin{proof}
	If $Y$ is an $A$-comodule coalgebra, then (\ref{eqcaract}) follows by Proposition \ref{paracaracterizacao}.
	On the other side, if (\ref{eqcaract}) holds, then:
	
	$(i) $ $\rho$ é injective.
	
	Indeed, consider $y\in Y$ such that $\rho(y)=0,$ thus $0=(\varepsilon_A\otimes \imath_Y)\rho(y)\stackrel{\ref{defcomodcopar}(i)}{=}y.$
	
	$(ii)$\ \ $(\imath_A\otimes\rho)\rho=(\Delta_A\otimes\imath_Y)\rho.$
	
Using item $(ii)$ of Proposition \ref{paraequiv}, it is enough to show that $(\Delta_A\otimes\imath_Y)T=(\imath_A\otimes T)(T\otimes\imath_A)(\imath_Y\otimes\Delta_A).$
	\begin{eqnarray*}
		& \ &(\Delta_A\otimes\imath_Y)(T(y\otimes a))(1\otimes a'\otimes y')\\
		&=&(\Delta_A\otimes\imath_Y)(T((\varepsilon_Y\otimes\imath_Y)\Delta_Y(y)\otimes a))(1\otimes a'\otimes y')\\
		&=&(\Delta_A\otimes\imath_Y)((\varepsilon_Y\otimes T)(\Delta_Y(y)\otimes a))(1\otimes a'\otimes y')\\
		&=&(\varepsilon_Y\otimes\Delta_A\otimes\imath_Y)((\imath_Y\otimes T)(\Delta_Y(y)\otimes a)(1\otimes 1\otimes y'))(1\otimes a'\otimes 1)\\
		&=&(\varepsilon_Y\otimes\Delta_A\otimes\imath_Y)(x\otimes b\otimes w)(1\otimes a'\otimes 1)\\
		&=&\varepsilon_Y(x)\otimes\Delta_A(b)(1\otimes a')\otimes w\\
		&=&\varepsilon_Y(x)b_{(1)}\otimes b_{(2)} a'\otimes w\\
		&\stackrel{(\ref{eqcaract})}{=}&(\imath_A\otimes\varepsilon_Y)(\rho(x)(b_{(1)}\otimes1))\otimes b_{(2)} a'\otimes w\\
		&=& x^{(-1)}b_{(1)}\varepsilon_Y(x^{(0)})\otimes b_{(2)} a'\otimes w\\
		&=&((\imath_A\otimes\varepsilon_Y)T\otimes\imath_A\otimes\imath_Y)(\imath_Y\otimes(\Delta_A\otimes\imath_Y)T)(\Delta_Y(y)\otimes a)(1\otimes a'\otimes y')\\
		&\stackrel{\ref{defcomodcopar}(iv)}{=}&(\imath_A\otimes T)(T\otimes\imath_A)(y\otimes\Delta_A(a))(1\otimes a'\otimes y'),
	\end{eqnarray*}
	for all $y,y'\in Y$ and $a,a'\in A$, denoting $(\imath_Y\otimes T)(\Delta_Y(y)\otimes a)(1\otimes 1\otimes y')=x\otimes b\otimes w\in Y\otimes A\otimes Y.$ 	
	
	The other items of Definition \ref{comodcoalglob} follow immediately from Definition \ref{defcomodcopar}.
\end{proof}

\begin{pro}
Let $Y$ and $A$ be Hopf algebras. Then, Definition
	\ref{part_comod_coal} and Definition \ref{defcomodcopar} are equivalent.
\end{pro}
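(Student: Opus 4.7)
The plan is to observe that in the Hopf setting both $A$ and $Y$ are unital, so $M(A)=A$, $M(A\otimes Y)=A\otimes Y$, and every completion $M_0(-)$ appearing in Definition \ref{defcomodcopar} collapses to an ordinary tensor product. In particular the image of $\rho\colon Y\to M(A\otimes Y)$ lies in $A\otimes Y$, which is the codomain demanded by Definition \ref{part_comod_coal}. Moreover, the map $T(y\otimes a)=\rho(y)(a\otimes 1)$ satisfies $T(y\otimes 1_A)=\rho(y)$, and writing $\rho(y)=y^{(-1)}\otimes y^{(0)}$ we have $T(y\otimes a)=y^{(-1)}a\otimes y^{(0)}$. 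The strategy is then to match the four axioms of Definition \ref{defcomodcopar} against the three of Definition \ref{part_comod_coal} one by one, using the principle that both sides of each axiom in Definition \ref{defcomodcopar} depend on $a$ only through right multiplication by $(a\otimes 1)$-type factors, so their $a=1_A$ specialisation already captures the full content.

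First I would handle the trivial matches. Item (i) of Definition \ref{defcomodcopar} is literally item (i) of Definition \ref{part_comod_coal}. The covering conditions (ii) and (v) of Definition \ref{defcomodcopar} are automatically satisfied since in a unital setting every relevant element already lives in $Y\otimes A\otimes Y$ or $A\otimes Y\otimes Y$. Next, for item (iii) of Definition \ref{defcomodcopar} versus item (ii) of Definition \ref{part_comod_coal}, I would expand both sides of
\[
(\imath_A\otimes\Delta_Y)T(y\otimes a)=(T\otimes\imath_Y)(\imath_Y\otimes T)(\Delta_Y(y)\otimes a)
\]
in Sweedler notation, obtaining $y^{(-1)}a\otimes y^{(0)}_{(1)}\otimes y^{(0)}_{(2)}$ on the left and $y_{(1)}^{(-1)}y_{(2)}^{(-1)}a\otimes y_{(1)}^{(0)}\otimes y_{(2)}^{(0)}$ on the right. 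Specialising $a=1_A$ yields exactly item (ii) of Definition \ref{part_comod_coal}; conversely, multiplying the first tensor factor by $a$ on the right restores the full equality. The same pattern settles item (iv) of Definition \ref{defcomodcopar} against item (iii) of Definition \ref{part_comod_coal}: using Remark \ref{reescreve} one rewrites the multiplier form as
\[
y^{(-1)}a_{(1)}\otimes y^{(0)(-1)}a_{(2)}\otimes y^{(0)(0)}=y_{(1)}^{(-1)}(y_{(2)}^{(-1)})_{(1)}a_{(1)}\varepsilon_Y(y_{(1)}^{(0)})\otimes(y_{(2)}^{(-1)})_{(2)}a_{(2)}\otimes y_{(2)}^{(0)},
\]
whose $a=1_A$ case is exactly Definition \ref{part_comod_coal}(iii), and conversely the $a$-dependent form is recovered by right multiplication by $\Delta_A(a)\otimes 1$.

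For the symmetric versions I would argue in parallel, using $\overline{T}(a\otimes y)=(a\otimes 1)\rho(y)=ay^{(-1)}\otimes y^{(0)}$ and comparing item (vi) of Definition \ref{defcomodcopar} with item (iv) of Definition \ref{part_comod_coal}. The Sweedler expansion is symmetric to the previous one and again the $a=1_A$ specialisation of the multiplier identity is precisely the Hopf-algebra identity. The main obstacle is not conceptual but notational: one must keep track of doubly nested coaction and comultiplication indices $y_{(i)}^{(j)}$ and $(y^{(-1)})_{(k)}$ and verify that the auxiliary factors $(\imath_A\otimes\varepsilon_Y)T(y\otimes b)=y^{(-1)}b\,\varepsilon_Y(y^{(0)})$ appearing in item (iv) of Definition \ref{defcomodcopar} produce exactly the $(m_A\otimes\imath_A\otimes\imath_Y)$-term demanded by item (iii) of Definition \ref{part_comod_coal}. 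Once the bookkeeping is in place, the equivalence is immediate.
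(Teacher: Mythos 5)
Your proposal is correct and follows essentially the same route as the paper: the forward direction amounts to right-multiplying the Hopf-algebra axioms of Definition \ref{part_comod_coal} by the appropriate $(a\otimes 1)$- or $(\Delta_A(a)\otimes 1)$-factors to recover items (iii) and (iv) of Definition \ref{defcomodcopar}, and the converse is obtained by specialising $a=1_A$, exactly as in the paper's proof. Your Sweedler expansions for both axioms match the paper's computations, and the observation that the covering conditions are automatic in the unital case is the same justification the paper uses.
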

\begin{proof}
	Suppose that Definition \ref{part_comod_coal} holds. Then items (i) and (ii) of Definition \ref{defcomodcopar} are automatically satisfied. Furthermore,
	for every $y\in Y$, $a\in A$, 
	\begin{eqnarray*}
		(\imath_A\otimes\Delta_Y)T(y\otimes a)&=&(\imath_A\otimes\Delta_Y)(\rho(y)(a\otimes 1))\\
		&=& (\imath_A\otimes\Delta_Y)\rho(y)(a\otimes 1\otimes 1)\\
		&\stackrel{\ref{part_comod_coal}(ii)}{=}& (m_A\otimes\imath_Y\otimes\imath_Y)(\imath_A\otimes \tau_{Y,A}\otimes \imath_Y)(\rho\otimes\rho)\Delta_Y(y)(a\otimes 1\otimes 1)\\
		&=&(T\otimes \imath_Y)(y_{(1)}\otimes \rho(y_{(2)})(a\otimes 1))\\
		&=&(T\otimes \imath_Y)(\imath_Y\otimes T)(\Delta_Y(y)\otimes a),
	\end{eqnarray*}
		and,
	\begin{eqnarray*}
		(\imath_A\otimes T)(T\otimes\imath_A)(y\otimes\Delta_A(a))&=&(\imath_A\otimes T)(\rho(y)(a_{(1)}\otimes 1)\otimes a_{(2)})\\
		&=&((\imath_A\otimes\rho)\rho(y))(a_{(1)}\otimes a_{(2)}\otimes 1)\\
		&\stackrel{\ref{part_comod_coal} (iii)}{=}&(m_A\otimes\imath_A\otimes\imath_Y)\{(\imath_A\otimes\varepsilon_Y)\rho\otimes[(\Delta_A\otimes\imath_Y)\rho]\}\Delta_Y(y)(a_{(1)}\otimes a_{(2)}\otimes 1)\\
		&=&{y_{(1)}}^{(-1)}{{y_{(2)}}^{(-1)}}_{(1)}a_{(1)}\otimes {{y_{(2)}}^{(-1)}}_{(2)}a_{(2)}\otimes {y_{(2)}}^{(0)}\varepsilon_Y({y_{(1)}}^{(0)})\\
		&=&((\imath_A\otimes\varepsilon_Y)T\otimes\imath_A\otimes\imath_Y)(\imath_Y\otimes(\Delta_A\otimes\imath_Y)T)(\Delta_Y(y)\otimes a).	\end{eqnarray*} 
	
Conversely, just take $ a = 1_A $ in items (iii) and (iv) of Definition \ref{defcomodcopar}. 
	\end{proof}

\begin{pro}\label{comodcoalparcviah}
Consider the linear map
	\begin{eqnarray*}
		\rho: Y&\longrightarrow & M(A\otimes Y)\\
		y &\longmapsto & h\otimes y
	\end{eqnarray*}	
	where $h\in M(A)$. Then $Y$ is a partial $A$-comodule coalgebra via $\rho$ if and only if
	\begin{enumerate} 	
		\item[(i)]$\varepsilon(h)=1_\Bbbk$
		\item[(ii)]$h\otimes h=(h\otimes 1)\Delta(h).$
	\end{enumerate}
\end{pro}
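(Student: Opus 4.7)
The plan is to verify each of the four conditions of Definition \ref{defcomodcopar} for $\rho(y) = h \otimes y$ and match them with (i), (ii) of the statement. Since $\rho(y)(a \otimes 1) = (h \otimes y)(a \otimes 1) = ha \otimes y$, the associated map is $T(y \otimes a) = ha \otimes y$. Condition (i) of the Definition reads $(\varepsilon_A \otimes \imath_Y)(h \otimes y) = \varepsilon(h)\, y$, so it holds for all $y$ iff $\varepsilon(h) = 1_\Bbbk$. The coverage condition (ii) will be automatic: $(\imath_Y \otimes T)(\Delta_Y(y) \otimes a)(1 \otimes 1 \otimes y') = y_{(1)} \otimes ha \otimes y_{(2)} y'$ lies in $Y \otimes A \otimes Y$ because $\Delta_Y(y)(1 \otimes y') \in Y \otimes Y$ and $ha \in A$; the left-cover version is symmetric, using $(1 \otimes y')\Delta_Y(y) \in Y \otimes Y$.

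For (iii), I will cover both sides of the identity by $(1 \otimes 1 \otimes y')$: the left-hand side becomes $ha \otimes y_{(1)} \otimes y_{(2)} y'$ while the right-hand side, by two applications of $T$, becomes $h(ha) \otimes y_{(1)} \otimes y_{(2)} y' = h^2 a \otimes y_{(1)} \otimes y_{(2)} y'$. Since $\{y_{(1)} \otimes y_{(2)} y' : y, y' \in Y\}$ spans $Y \otimes Y$ (as $\Delta_Y(Y)(1 \otimes Y) = Y \otimes Y$), this reduces to $ha = h^2 a$ for all $a \in A$, i.e., $h = h^2$ in $M(A)$ by nondegeneracy of $A$. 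For (iv), a four-factor computation using $(\imath_A \otimes \varepsilon_Y)T(y \otimes a) = \varepsilon_Y(y)\, ha$, the extension of $\Delta_A$ to $M(A)$ with $\Delta_A(ha) = \Delta_A(h)\Delta_A(a) = h_{(1)}a_{(1)} \otimes h_{(2)}a_{(2)}$, and the telescoping $\varepsilon_Y(y_{(1)})\, y_{(2)} = y$, will produce left-hand side $h a_{(1)} \otimes h a_{(2)} \otimes y$ and right-hand side $h h_{(1)} a_{(1)} \otimes h_{(2)} a_{(2)} \otimes y$. After suitable covers by $(1 \otimes a' \otimes y')$ and using that $\Delta_A(A)(1 \otimes A) = A \otimes A$ (bijectivity of $T_1$), the equality of these expressions is equivalent to the multiplier identity $h \otimes h = (h \otimes 1)\Delta(h)$ in $M(A \otimes A)$.

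To close the argument, I will observe that (i) and (ii) of the statement together already force $h^2 = h$, so condition (iii) of Definition \ref{defcomodcopar} comes for free: applying $\imath \otimes \varepsilon$ to $h \otimes h = (h \otimes 1)\Delta(h)$ gives $\varepsilon(h)\, h = h \cdot h = h^2$, and $\varepsilon(h) = 1$ then yields $h = h^2$. The main technical obstacle will be the bookkeeping of coverings in condition (iv), where one has to extend $\Delta_A$ to $M(A)$ to make sense of $\Delta_A(h)$ and then pass from an identity that holds after right-multiplication by $\Delta_A(a)(1 \otimes a')$ back to the genuine multiplier identity $h \otimes h = (h \otimes 1)\Delta(h)$ — this is precisely where the regular multiplier Hopf algebra axioms, ensuring $\Delta_A(A)(A \otimes A) = A \otimes A$, are used.
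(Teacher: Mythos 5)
Your proposal is correct and follows essentially the same route as the paper: compute $T(y\otimes a)=ha\otimes y$, reduce Definition \ref{defcomodcopar}(i) to $\varepsilon(h)=1_\Bbbk$, note (ii) is automatic, reduce (iii) to $h=h^2$ and (iv) to $h\otimes h=(h\otimes 1)\Delta(h)$ via the covering arguments and $\Delta_A(A)(1\otimes A)=A\otimes A$, and recover $h=h^2$ from (i) and (ii) by applying $\imath\otimes\varepsilon$. The paper organizes this as two separate implications rather than item-by-item equivalences, but the computations and the key observations are the same.
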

Moreover, $Y$ is a symmetric partial $A$-comodule coalgebra via $\rho$ if and only if the following additional condition also holds 
\begin{enumerate} 	
	\item[(iii)]$h\otimes h=\Delta(h)(h\otimes 1).$
	\end{enumerate}
	\begin{proof}
	Suppose that $Y$ is a partial $A$-comodule coalgebra via $\rho$, for all $y\in Y$,
	$y\stackrel{\ref{defcomodcopar}(i)}{=}(\varepsilon\otimes \imath)\rho(y)=\varepsilon(h)y.$ Then, $\varepsilon(h)=1_\Bbbk.$
	
	To show item (ii), we will use item $ (iv) $ of Definition \ref{defcomodcopar}. For every $y,y'\in Y$, $a, a'\in A$, on one hand, 
	\begin{eqnarray*}
	(\imath_A\otimes T)(T\otimes\imath_A)(y\otimes\Delta_A(a))](1\otimes a'\otimes y')&\stackrel{(\ref{compondopar})}{=}& (\imath_A\otimes T)[(T\otimes\imath_A)(y\otimes\Delta_A(a)(1\otimes a'))](1\otimes1\otimes y')\\&=& (\imath_A\otimes T)(T(y\otimes a_{(1)}a')\otimes a_{(2)})(1\otimes1\otimes y')\\
	&=&(ha_{(1)}a'\otimes T(y\otimes a_{(2)}))(1\otimes1\otimes y')\\
	&=& ha_{(1)}a'\otimes ha_{(2)}\otimes yy'\\
	&=& (h\otimes h\otimes 1)(\Delta_A(a)(1\otimes a')\otimes yy').
	\end{eqnarray*}
	
	On the other hand, 
	\begin{eqnarray*}
	&\ &[((\imath_A\otimes\varepsilon_Y)T\otimes\imath_A\otimes\imath_Y)(\imath_Y\otimes(\Delta_A\otimes\imath_Y)T)(\Delta_Y(y)\otimes a)](1\otimes a'\otimes y')\\
	&=&((\imath_A\otimes\varepsilon_Y)T\otimes\imath_A\otimes\imath_Y)[(\imath_Y\otimes\Delta_A\otimes\imath_Y)((\imath_Y\otimes T)(\Delta_Y(y)\otimes a)(1\otimes1\otimes y'))(1\otimes1\otimes a'\otimes 1)]\\
	&\stackrel{\ref{defcomodcopar} (ii)}{=}&((\imath_A\otimes\varepsilon_Y)T\otimes\imath_A\otimes\imath_Y)[(\imath_Y\otimes\Delta_A\otimes\imath_Y)(x\otimes b\otimes w)(1\otimes1\otimes a'\otimes 1)]\\
	&=&((\imath_A\otimes\varepsilon_Y)T\otimes\imath_A\otimes\imath_Y)(x\otimes\Delta_A(b)(1\otimes a')\otimes w)\\
	&=&hb_{(1)}\varepsilon_Y(x)\otimes b_{(2)}a'\otimes w\\
	&=&(h\otimes 1\otimes 1)(\varepsilon_Y\otimes\Delta_A\otimes \imath_Y)(x\otimes b\otimes w)(1\otimes a'\otimes 1)\\
	&=&(h\otimes 1\otimes 1)(\varepsilon_Y\otimes\Delta_A\otimes \imath_Y)((\imath_Y\otimes T)(\Delta_Y(y)\otimes a)(1\otimes1\otimes y'))(1\otimes a'\otimes 1)\\
	&=&(h\otimes 1\otimes 1)((\Delta_A\otimes\imath_Y)(\varepsilon_Y\otimes T)(\Delta_Y(y)\otimes a)(1\otimes1\otimes y'))(1\otimes a'\otimes 1)\\
&=&(h\otimes 1\otimes 1)((\Delta_A\otimes\imath_Y)T((\varepsilon_Y\otimes\imath_Y)\Delta_Y(y)\otimes a)(1\otimes1\otimes y'))(1\otimes a'\otimes 1)\\
	&=&(h\otimes 1\otimes 1)((\Delta_A\otimes\imath_Y)T(y\otimes a)(1\otimes1\otimes y'))(1\otimes a'\otimes 1)\\
&=&(h\otimes 1\otimes 1)((\Delta_A(ha)\otimes y)(1\otimes1\otimes y'))(1\otimes a'\otimes 1)\\
&=&((h\otimes 1)\Delta(h)\otimes 1)(\Delta_A(a)(1\otimes a')\otimes yy'),
	\end{eqnarray*}
denoting $(\imath_Y\otimes T)(\Delta_Y(y)\otimes a)(1\otimes 1\otimes y')=x\otimes b\otimes w\in Y\otimes A\otimes Y.$ Therefore, since $\Delta_A(A)(1\otimes A)=A\otimes A$, $h\otimes h=(h\otimes 1)\Delta(h).$ The symmetric condition holds similarly.
	
	Conversely, suppose that $\varepsilon(h)=1_\Bbbk$ and $h\otimes h=(h\otimes 1)\Delta(h).$ Note that, applying $\imath\otimes\varepsilon$ in the equality $h\otimes h=(h\otimes 1)\Delta(h)$ and assuming $\varepsilon(h)=1_\Bbbk$, we have $h= h^2.$
	
	$\bullet$ For all $y\in Y,$ $y=\varepsilon(h)y=(\varepsilon\otimes \imath)\rho(y).$
	\ \
	
	$\bullet$ $(\imath_Y\otimes T)(\Delta(y)\otimes a)(1\otimes 1\otimes y')=y_{(1)}\otimes ha\otimes y_{(2)}y'\in Y\otimes A\otimes Y$ for all $y,y'\in Y$ and $a\in A.$
	
	$\bullet$\begin{eqnarray*}
		(T\otimes \imath_Y)((\imath_Y\otimes T)(\Delta_Y(y)\otimes a))(1\otimes 1\otimes y')&\stackrel{(\ref{comppar2})}{=}&(T\otimes \imath_Y)((\imath_Y\otimes T)(\Delta_Y(y)\otimes a)(1\otimes 1\otimes y'))\\
		&=&\rho(y_{(1)})(ha\otimes 1)\otimes y_{(2)}y'\\
		&=&hha\otimes y_{(1)}\otimes y_{(2)}y'\\
		&=&ha\otimes y_{(1)}\otimes y_{(2)}y'\\
		&=&ha\otimes \Delta(y)(1\otimes y')\\
		&=&(\imath_A\otimes\Delta_Y)(\rho(y)(a\otimes 1))(1\otimes 1\otimes y')\\
		&=&(\imath_A\otimes\Delta_Y)T(y\otimes a)(1\otimes 1\otimes y'),
	\end{eqnarray*}
	for every $y,y'\in Y$ and $a\in A.$ Therefore, $(\imath_A\otimes\Delta_Y)T=(T\otimes \imath_Y)(\imath_Y\otimes T)(\Delta_Y\otimes \imath_A).$
	
	$\bullet$ $(\imath_A\otimes T)(T\otimes\imath_A)(\imath_Y\otimes\Delta_A)=((\imath_A\otimes\varepsilon_Y)T\otimes\imath_A\otimes\imath_Y)(\imath_Y\otimes(\Delta_A\otimes\imath_Y)T)(\Delta_Y\otimes\imath_A).$
	\begin{eqnarray*}
	[(\imath_A\otimes T)(T\otimes\imath_A)(y\otimes\Delta_A(a))](a'\otimes a''\otimes y')&=&(\imath_A\otimes T)((T\otimes\imath_A)(y\otimes\Delta_A(a))(a'\otimes a''\otimes y')\\
		&\stackrel{\ref{trocaLydiaparcial}}{=}&(\imath_A\otimes T)(ha_{(1)}a'\otimes y\otimes a_{(2)})(1\otimes a''\otimes y')\\
		&=&(ha_{(1)}a'\otimes ha_{(2)}\otimes y)(1\otimes a''\otimes y')\\
		&=&(h\otimes h)(a_{(1)}a'\otimes a_{(2)}\otimes y)(1\otimes a''\otimes y')\\
		&=&((h\otimes 1)\Delta(h)(\Delta(a)(a'\otimes 1))\otimes y)((1\otimes a''\otimes y')\\
		&=&(h\otimes 1)\Delta(ha)(a'\otimes a'')\otimes yy'.
	\end{eqnarray*}
On the other hand,
	\begin{eqnarray*}
		&&[((\imath_A\otimes\varepsilon_Y)T\otimes\imath_A\otimes\imath_Y)(\imath_Y\otimes(\Delta_A\otimes\imath_Y)T)(\Delta_Y(y)\otimes a)](a'\otimes a''\otimes y')\\
		&=&((\imath_A\otimes\varepsilon_Y)T\otimes\imath_A\otimes\imath_Y)[(\imath_Y\otimes(\Delta_A\otimes\imath_Y)T)(\Delta_Y(y)\otimes a)(1\otimes a''\otimes y')](a'\otimes 1^2)\\
		&=&((\imath_A\otimes\varepsilon_Y)T\otimes\imath_A\otimes\imath_Y)[(\imath_Y\otimes\Delta_A\otimes\imath_Y)((\imath_Y\otimes T)(\Delta_Y(y)\otimes a)(1\otimes 1\otimes y'))(1\otimes1\otimes a''\otimes 1)](a'\otimes 1^2)\\
		&=&((\imath_A\otimes\varepsilon_Y)T\otimes\imath_A\otimes\imath_Y)[(\imath_Y\otimes\Delta_A\otimes\imath_Y)(y_{(1)}\otimes ha\otimes y_{(2)}y')(1\otimes1\otimes a''\otimes 1)](a'\otimes 1\otimes 1)\\
		&=&(\imath_A\otimes\varepsilon_Y)T\otimes\imath_A\otimes\imath_Y)(y_{(1)}\otimes\Delta_A(ha)(1\otimes a'')\otimes y_{(2)}y')](a'\otimes 1\otimes 1)\\
		&=&(h\otimes 1)\Delta(ha)(a'\otimes a'')\otimes yy',
	\end{eqnarray*}
	for all $a, a',a''\in A$, $y, y'\in Y$. Therefore, $Y$ is a partial $A$-comodule coalgebra via $\rho.$ If we suppose $h\otimes h=\Delta(h)(h\otimes 1)$, analougosly, $Y$ is a symmetric partial $A$-comodule coalgebra via $\rho.$
	\end{proof}

\begin{exa} Considere $Y$ and $A$ regular multiplier Hopf algebras and $h\in A$ a right cointegral such that $\varepsilon(h)=1_{\mathbb{K}}$. Then, by Proposition \ref{comodcoalparcviah}, $Y$ is a partial $A$-comodule coalgebra via
	 \begin{eqnarray*}
		\rho: Y&\longrightarrow & M(A\otimes Y)\\
		y &\longmapsto & h\otimes y.
	\end{eqnarray*}	
Indeed, $(h\otimes 1)\Delta(h)=hh_{(1)}\otimes h_{(2)}=h\varepsilon(hh_{(1)})\otimes h_{(2)}=h\otimes h.$ Moreover, if $h\in A$ is a left and right cointegral such $\varepsilon(h)=1_{\mathbb{K}}$, then $Y$ is a symmetric partial $A$-comodule coalgebra via $\rho$.
	 \end{exa}

\begin{exa}\label{exemcomoparcviah}
	Consider $Y$ a regular multiplier Hopf algebra. Then, by Proposition \ref{comodcoalparcviah}, $Y$ is a symmetric partial $A_G$-comodule coalgebra via
	\begin{eqnarray*}
		\rho: Y&\longrightarrow & M(A_G\otimes Y)\\
		y &\longmapsto & h\otimes y
	\end{eqnarray*}	
	where, considering $N$ a subgroup of $G$,
	\begin{eqnarray*}
		h: G &\longrightarrow & \Bbbk\\
		s &\longmapsto &\left\{
		\begin{array}{rl}
			1 & \text{, } s\in N\\
			0 & \text{, otherwise }
		\end{array}. \right.
	\end{eqnarray*} 
	\end{exa}
\begin{rem}
The example above shows that $T$ is not always a bijection for partial comodule coalgebras.
\end{rem}

\begin{exa}
	Let $Y$ be a regular multiplier Hopf algebra and $\mathbb{H}_4=\Bbbk\langle g,x \ | \ g^2=1, x^2=0, xg=-gx \rangle$ the Sweedler Hopf algebra with coproduct given by $\Delta(g)=g\otimes g \ \mbox{and} \ \Delta(x)=x\otimes 1 + g\otimes x$. Then $Y$ is a symmetric partial $(\mathbb{H}_4\otimes A_G)$-partial comodule coalgebra via
	\begin{eqnarray*}
		\rho: Y&\longrightarrow &\mathbb{H}_4\otimes A_G\otimes Y\\
		y&\longmapsto &z\otimes h\otimes y
	\end{eqnarray*}
	where $z=\frac{1+g}{2}+\alpha gx$ (see \cite{Caenepeel}) and $h$ is given in Example \ref{exemcomoparcviah}.
\end{exa}

\begin{exa}
	Let $Y$ be a regular multiplier Hopf algebra and $T_3(q)=\Bbbk\langle g,x \ | \ g^3=1, x^3=0, xg=qgx \rangle$ the Taft algebra of order 3 with coproduct given by $\Delta(g)=g\otimes g \ \mbox{and} \ \Delta(x)=x\otimes 1 + g\otimes x$, where $q$ is a primitive $3^{th}$ root of unity.
	
	Then $Y$ is a symmetric partial $(T_3(q)\otimes A_G)$-comodule coalgebra via
	\begin{eqnarray*}
		\rho: Y&\longrightarrow &T_3(q)\otimes A_G\otimes Y\\
		y&\longmapsto &z\otimes h\otimes y
	\end{eqnarray*}
	where $z=\frac{1+g+g^2}{3}+\frac{1}{3}((q-1)\alpha gx+(q^2-1)\alpha g^2x-3q\alpha^2 gx^2)$ (see \cite{Leonardo}) and $h$ is given in Example \ref{exemcomoparcviah}.
\end{exa}

	\begin{exa}\label{exemdocorep} Let $A$ be the vector space over the complex numbers field $\mathbb{C}$ generate by the elements $\{e_pd^q ; p\in \mathbb{Z} \ \mbox{and} \ q\in\mathbb{N}\}$. Define the product in $A$ as $de_p=e_{p+1}d$\quad and\quad $e_pe_r=\delta_{p,r}e_p$,
		where $\delta$ is the Kronecker's delta. Consider $\lambda\in\mathbb{C}$ be a nonzero element and $c_{\lambda}=\sum\limits_{r\in\mathbb{Z}}\lambda^r e_r\in M(A).$
		The coproduct $\Delta_{\lambda}$ is defined by $\Delta_{\lambda}(e_p)= \sum_{r\in\mathbb{Z}} e_r\otimes e_{p-r}, \ \ p\in\mathbb{Z}\ \ \ \mbox{and}\ \ \  
			\Delta_{\lambda}(d)=d\otimes c_{\lambda}+1\otimes d.$
Therefore by \cite{CorepI}, $(A,\Delta_{\lambda})$ is a multiplier Hopf algebra, with antipode and counit given by 
		\begin{eqnarray*}
			S(e_p)=e_{-p}, &\ & \varepsilon(e_p)=\delta_{0,p}, \ \ p\in\mathbb{Z},\\
			S(d)=-dc_{\lambda}^{-1}, & \ & \varepsilon(d)=0.
		\end{eqnarray*}
	
Then, by Proposition \ref{comodcoalparcviah}, $Y$ is a symmetric partial  $A$-comodule coalgebra via
	\begin{eqnarray*}
		\rho: Y&\longrightarrow & A\otimes Y\\
		y &\longmapsto & e_0\otimes y.
	\end{eqnarray*}
	\end{exa}

\subsection{Induced Partial Comodule Coalgebra}
In this subsection a specific partial comodule coalgebra is constructed from a comodule coalgebra through projection. For this remember the following definition.

\begin{defi}
	Let $Z$ be an algebra and let $Y$ be a subalgebra of $Z$. A linear operator  $\pi: Z\longrightarrow Z$ is  called of \textit{projection} over $Y$, if $Im{\pi}=Y$ and $\pi(y)=y$, for all $y\in Y$. Besides that, when $\pi$ is multiplicative, we say that $\pi$ is an \textit{algebra projection}.
\end{defi}

\begin{rem} Let $(A,\Delta_A)$ be a multiplier bialgebra (or multiplier Hopf algebra) and let $B\subseteq A$ be a subalgebra. If $(B,\Delta_B)$ is a multiplier bialgebra (or multiplier Hopf algebra), we say that $B$ is a \textit{multiplier subbialgebra (or multiplier Hopf subalgebra)} of $A$, where $\Delta_B:B\longrightarrow M(B\otimes B)$ denotes the coproduct $\Delta_A$ restrict to $B.$
\end{rem}

\begin{defi}\label{defprojdealgebras}
	Let $Z$ be a regular multiplier Hopf algebra, $Y$ a regular multiplier Hopf subalgebra of $Z$ and $\pi: Z\longrightarrow Z$ an algebra projection over $Y$. We say that  $\pi$ is \textit{comultiplicative} if  $\Delta_Y\pi= (\pi\otimes\pi)\Delta_Z$, \textit i.e., 	\begin{eqnarray*}
			\Delta_Y(\pi(z))(1\otimes y)=(\pi\otimes\pi)(\Delta_Z(z)(1\otimes y)),\\
			\nonumber\Delta_Y(\pi(z))(y\otimes 1)=(\pi\otimes\pi)(\Delta_Z(z)(y\otimes 1)),
		\end{eqnarray*}
		for all $y\in Y$ and $z\in Z$.
	\end{defi}

\begin{pro} \label{comodcoinduz} Let $Z$ be an $A$-comodule coalgebra via $\rho$, $Y$ a regular multiplier Hopf subalgebra of $Z$ and $\pi$ an algebra projection of $Z$ over $Y$. If
\begin{enumerate}
	\item[(i)] $\pi$ is comultiplicative,
	\item[(ii)] 
	$(\imath_A\otimes\pi)T(\pi(z)\otimes a)=(\imath_A\otimes\pi)T(z\otimes a),$
		\item[(iii)]
		$(\imath_A\otimes\pi)T(\pi(z)\otimes a)=(\imath_A\otimes\pi)T(z_{(2)}\otimes a)\varepsilon_Y(\pi(z_{(1)}t)),$
	\end{enumerate}
for all $a\in A$, $z\in Z$ and $t\in Y$ such that $\varepsilon_Y(t)=1_{\Bbbk},$
then $Y$ is a partial $A$-comodule coalgebra via
\begin{eqnarray*}
	\beta: Y&\longrightarrow & M(A\otimes Y)\\
	y &\longmapsto & \beta(y):=(\imath_A\otimes\pi)\rho(y),
\end{eqnarray*}
where 	$(\imath_A\otimes\pi)\rho(y)(a\otimes 1)=(\imath_A\otimes\pi)(\rho(y)(a\otimes 1))$ and $(a\otimes 1)(\imath_A\otimes\pi)\rho(y)=(\imath_A\otimes\pi)((a\otimes 1)(\rho(y)),$     	
for all $y\in Y$ and $a\in A.$
 \end{pro}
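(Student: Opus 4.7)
The plan is to verify the four axioms (i)--(iv) of Definition \ref{defcomodcopar} for the map $\beta$, writing $T_\beta(y\otimes a)=\beta(y)(a\otimes 1)=(\imath_A\otimes\pi)(\rho(y)(a\otimes 1))=(\imath_A\otimes\pi)T(y\otimes a)$ and systematically reducing statements about $T_\beta$ to known statements about $T$ by pulling the projection $\pi$ through.

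\textbf{Axiom (i) and covering (ii).} For axiom (i) of Definition \ref{defcomodcopar}, since $\varepsilon_A\otimes\imath_Y$ commutes with $\imath_A\otimes\pi$ in the obvious way, $(\varepsilon_A\otimes\imath_Y)\beta(y)=\pi((\varepsilon_A\otimes\imath_Y)\rho(y))=\pi(y)=y$, using Proposition \ref{epsiloncoacao} and that $\pi$ restricts to the identity on $Y$. For the covering condition (ii), I would write $T_\beta(y_{(2)}\otimes a)(1\otimes y')=(\imath_A\otimes\pi)(\rho(y_{(2)})(a\otimes 1)(1\otimes y'))$ and observe that $\rho(y_{(2)})(a\otimes 1)(1\otimes y')\in A\otimes Z$ (since $y'\in Y\subseteq Z$), and that $\pi(Z)=Y$, so the result lands in $Y\otimes A\otimes Y$; the right-covering is symmetric.

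\textbf{Axiom (iii).} Using $T_\beta=(\imath_A\otimes\pi)T$, I would compute
\begin{align*}
(\imath_A\otimes\Delta_Y)T_\beta(y\otimes a)&=(\imath_A\otimes\Delta_Y\pi)T(y\otimes a)\\
&=(\imath_A\otimes\pi\otimes\pi)(\imath_A\otimes\Delta_Z)T(y\otimes a),
\end{align*}
by hypothesis (i) that $\pi$ is comultiplicative. Since $Z$ is an $A$-comodule coalgebra, $(\imath_A\otimes\Delta_Z)T=(T\otimes\imath_Z)(\imath_Z\otimes T)(\Delta_Z\otimes\imath_A)$. For $y\in Y$ I would replace $\Delta_Z(y)$ by $\Delta_Y(y)$ through the multiplier Hopf subalgebra inclusion, and then use that $(\imath_A\otimes\pi)T(\pi(z)\otimes a)=(\imath_A\otimes\pi)T(z\otimes a)$ (hypothesis (ii)) to recognize the right-hand side as $(T_\beta\otimes\imath_Y)(\imath_Y\otimes T_\beta)(\Delta_Y(y)\otimes a)$, carefully keeping covering elements $y'\in Y$ around so that everything sits in the concrete space $Y\otimes A\otimes Y$.

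\textbf{Axiom (iv) -- the main obstacle.} This is where hypothesis (iii) does its work. The strategy is to expand both sides, after covering by $1\otimes a'\otimes y'$ as in Remark 3.4(4), in terms of $T$ and $\pi$:
\begin{align*}
(\imath_A\otimes T_\beta)(T_\beta\otimes\imath_A)(y\otimes\Delta_A(a))(1\otimes a'\otimes y')&=(\imath_A\otimes\imath_A\otimes\pi)(\imath_A\otimes T)\bigl((\imath_A\otimes\pi\otimes\imath_A)(T\otimes\imath_A)(y\otimes\Delta_A(a))\bigr)(1\otimes a'\otimes y').
\end{align*}
Because $Z$ is globally an $A$-comodule coalgebra, I can replace $(\imath_A\otimes T)(T\otimes\imath_A)(y\otimes\Delta_A(a))$ by $(\Delta_A\otimes\imath_Z)T(y\otimes a)$ (Proposition \ref{paraequiv}(ii)), but the presence of the inner $\pi$ forces me to first invoke hypothesis (ii) to move that $\pi$ inside. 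On the right-hand side, $(\imath_A\otimes\varepsilon_Y)T_\beta$ produces a factor $\varepsilon_Y(\pi(z_{(1)}t))$ after covering $z_{(1)}$ by a suitable $t\in Y$ with $\varepsilon_Y(t)=1_\Bbbk$, and expanding $(\Delta_A\otimes\imath_Y)T_\beta$ via comultiplicativity of $\pi$. Hypothesis (iii), which is precisely the identity $(\imath_A\otimes\pi)T(\pi(z)\otimes a)=(\imath_A\otimes\pi)T(z_{(2)}\otimes a)\,\varepsilon_Y(\pi(z_{(1)}t))$, is designed exactly to identify these two expressions. The main technical difficulty, and the one I would spend most care on, is keeping track of the covers: at each step I must show that all tensor products realized are genuinely in $Y\otimes\cdots\otimes Y$ (not only in a multiplier algebra), so that the applications of $\pi$, $\varepsilon_Y$ and $\Delta_Y$ really make sense. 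Once the bookkeeping is done, hypotheses (i), (ii) and (iii) assemble the required equality.

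Finally, to see that $\beta$ takes values in $M(A\otimes Y)$ rather than only $A\otimes Y$, one checks that the formulas $(\imath_A\otimes\pi)(\rho(y)(a\otimes 1))$ and $(\imath_A\otimes\pi)((a\otimes 1)\rho(y))$ satisfy the compatibility (\ref{compatMo}); this follows directly from the corresponding property for $\rho(y)\in M(A\otimes Z)$ by applying $\imath_A\otimes\pi$ to both sides.
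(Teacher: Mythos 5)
Your proposal follows essentially the same route as the paper's proof: verify the axioms of Definition \ref{defcomodcopar} for $T'=(\imath_A\otimes\pi)T$, using comultiplicativity of $\pi$ together with hypothesis (ii) for the covering condition and axiom (iii), and hypothesis (iii) combined with the global coassociativity (Proposition \ref{paraequiv}(ii)) for axiom (iv). The covering bookkeeping you flag as the main technical burden is exactly what the paper handles by multiplying through by elements of $Y$ (its equation establishing $((\imath_Y\otimes T')(\Delta_Y(y)\otimes a))(1\otimes 1\otimes y')$ as the projection of the global expression), so the sketch is sound as written.
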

\begin{proof}
	
Denote $T'(y\otimes a)=\beta(y)(a\otimes1)=(\imath_A\otimes\pi)T(y\otimes a),$ for all $a\in A$ and $y\in Y.$ Let us verify the items of Definition \ref{defcomodcopar}.
	
	$\bullet$ $(\varepsilon_A\otimes \imath_Y)\beta(y)=(\varepsilon_A\otimes \pi)\rho(y)
	=\pi((\varepsilon_A\otimes\imath_Y)\rho(y))
	\stackrel{\ref{epsiloncoacao}}{=}\pi(y)
	=y,$ for all $y\in Y.$\\

	$\bullet$ Given $y,y'\in Y$ and $a\in A$, 
	 	\begin{eqnarray*}
		((\imath_Y\otimes T')(\Delta_Y(y)\otimes a))(1\otimes1\otimes y')(y''\otimes 1^2)&=&((\imath_Y\otimes T')(\Delta_Y(y)(y''\otimes 1)\otimes a))(1\otimes 1\otimes y')\\
		&=&((\imath_Y\otimes T')(\Delta_Y(\pi(y))(y''\otimes 1)\otimes a))(1\otimes 1\otimes y')\\
		&\stackrel{(i)}{=}&(\imath_Y\otimes T')((\pi\otimes\pi)(\Delta_Z(y)(y''\otimes 1))\otimes a)(1\otimes 1\otimes y')\\
		&=&(\imath_Y\otimes T')((\pi\otimes\pi)(y_1y''\otimes y_2)\otimes a)(1\otimes 1\otimes y')\\
		&=&\pi(y_{(1)}y'')\otimes T'(\pi(y_{(2)})\otimes a)(1\otimes 1\otimes y')\\
		&\stackrel{(ii)}{=}&\pi(y_{(1)}y'')\otimes(\imath_A\otimes\pi)T(y_{(2)}\otimes a)(1\otimes 1\otimes y')\\
		&=&(\pi\otimes\imath_A\otimes\pi)(\imath_Z\otimes T)(\Delta_Z(y)(y''\otimes 1)\otimes a)(1\otimes 1\otimes y')\\
		&=&(\pi\otimes\imath_A\otimes\pi)((\imath_Z\otimes T)(\Delta_Z(y)\otimes a)(1\otimes 1\otimes y'))(y''\otimes1^2),
	\end{eqnarray*}	
	for all  $y''\in Y$. Then
	\begin{eqnarray}\label{iguald(ii)}
	((\imath_Y\otimes T')(\Delta_Y(y)\otimes a))(1\otimes 1\otimes y')=(\pi\otimes\imath_A\otimes\pi)((\imath_Z\otimes T)(\Delta_Z(y)\otimes a)(1\otimes 1\otimes y')).
	\end{eqnarray}
	By Definition \ref{comodcoalglob}, 
	$((\imath_Y\otimes T')(\Delta_Y(y)\otimes a))(1\otimes 1\otimes y')\in Y\otimes A\otimes Y,$ for all $y,y'\in Y$, $a\in A.$ Analogously, $(1\otimes 1\otimes y')((\imath_Y\otimes T')(\Delta_Y(y)\otimes a))\in Y\otimes A\otimes Y.$	
	
	$\bullet$ For all $y,y'\in Y$ and $a\in A,$
	\begin{eqnarray*}
	(\imath_A\otimes\Delta_Y)(T'(y\otimes a))(1\otimes 1\otimes y')&=&(\imath_A\otimes\Delta_Y)((\imath_A\otimes\pi)T(y\otimes a))(1\otimes1\otimes y')\\
		&\stackrel{(i)}{=}&y^{(-1)}a\otimes(\pi\otimes\pi)(\Delta_Z(y^{(0)})(1\otimes y'))\\
		&=&(\imath_A\otimes\pi\otimes\pi)((\imath_A\otimes\Delta_Z)(T(y\otimes a))(1\otimes1\otimes y'))\\
		&\stackrel{\ref{comodcoalglob}(ii)}{=}&(\imath_A\otimes\pi\otimes\pi)(((T\otimes\imath_Z)(\imath_Z\otimes T)(\Delta_Z(y)\otimes a))(1\otimes1\otimes y'))\\
		&\stackrel{\ref{comodcoalglob}(i)}{=}&(\imath_A\otimes\pi\otimes\pi)((T\otimes\imath_Z)(z\otimes b\otimes z'))\\
		&\stackrel{(ii)}{=}&T'(\pi(z)\otimes b)\otimes\pi (z')\\
		&=&(T'\otimes\imath_Y)((\pi\otimes\imath_A\otimes\pi)((\imath_Z\otimes T)(\Delta_Z(y)\otimes a))(1\otimes1\otimes y'))\\
		&\stackrel{(\ref{iguald(ii)})}{=}&(T'\otimes\imath_Y)((\imath_Y\otimes T')(\Delta_Y(y)\otimes a))(1\otimes 1\otimes y'),
	\end{eqnarray*}
where $(\imath_Z\otimes T)(\Delta_Z(y)\otimes a)(1\otimes1\otimes y'))=z\otimes b\otimes z'.$ 	
	
	$\bullet$ Let $y,y'\in Y$, $a,c\in A$, 
	\begin{eqnarray*}
		& \ &(\imath_A\otimes T')(T'\otimes\imath_A)(y\otimes\Delta_A(a))(1\otimes c\otimes y')\\
		&=&(\imath_A\otimes T')[(T'\otimes\imath_A)(y\otimes\Delta_A(a)(1\otimes c))](1\otimes 1\otimes y')\\
		&=&(\imath_A\otimes T')((\imath_A\otimes\pi)T(y\otimes a_{(1)})\otimes a_{(2)}c)(1\otimes 1\otimes y')\\
		&=&y^{(-1)}a_{(1)}\otimes T'(\pi(y^{(0)})\otimes a_{(2)}c)(1\otimes y')\\
		&=&y^{(-1)}a_{(1)}\otimes(\imath_A\otimes\pi)T(\pi(y^{(0)})\otimes a_{(2)}c)(1\otimes y')\\
		&\stackrel{(iii)}{=}&y^{(-1)}a_{(1)}\otimes(\imath_A\otimes\pi)T({y^{(0)}}_{(2)}\otimes a_{(2)}c)(1\otimes y')\varepsilon_Y(\pi({y^{(0)}}_{(1)}t))\\
		&=&(\imath_A\otimes\imath_A\otimes\pi)((\imath_A\otimes T)(y^{(-1)}a_{(1)}\otimes{y^{(0)}}_{(2)}\varepsilon_Y(\pi({y^{(0)}}_{(1)}t))\otimes a_{(2)}c))(1\otimes 1\otimes y')\\
		&=&(\imath_A\otimes\imath_A\otimes\pi)((\imath_A\otimes T)(\imath_A\otimes (\varepsilon_Y\pi)\otimes\imath_Y\otimes\imath_A)(y^{(-1)}a_{(1)}\otimes\Delta_Y(y^{(0)})(t\otimes1)\otimes a_{(2)}c))(1^2\otimes y')\\
		&=&(\imath_A\otimes\imath_A\otimes\pi)((\imath_A\otimes (\varepsilon_Y\pi)\otimes\imath_A\otimes\imath_Y)(\imath_A\otimes\imath_Y\otimes T)\\
		& \ &((\imath_A\otimes \Delta_Y)T(y\otimes a_{(1)})(1\otimes t\otimes1)\otimes a_{(2)}c))(1 \otimes 1 \otimes y')\\
		&\stackrel{\ref{comodcoalglob}(ii)}{=}&(\imath_A\otimes\imath_A\otimes\pi)((\imath_A\otimes (\varepsilon_Y\pi)\otimes\imath_A\otimes\imath_Y)(\imath_A\otimes\imath_Y\otimes T)\\
		& \ &((T\otimes\imath_Z)(\imath_Z\otimes T)(\Delta_Y(y)\otimes a_{(1)})(1\otimes t\otimes1)\otimes a_{(2)}c))(1\otimes 1\otimes y')\\
		&=&(\imath_A\otimes\imath_A\otimes\pi)(\imath_A\otimes (\varepsilon_Y\pi)\otimes\imath_A\otimes\imath_Y)\\
		& \ &[((\imath_A\otimes\imath_Y\otimes T)((T\otimes\imath_Z\otimes\imath_A)(\imath_Y\otimes T\otimes\imath_A)(\Delta_Y(y)\otimes a_{(1)}\otimes a_{(2)}c)))(1\otimes t\otimes1\otimes 1)](1^2\otimes y')\\
		&=&(\imath_A\otimes\imath_A\otimes\pi)(\imath_A\otimes (\varepsilon_Y\pi)\otimes\imath_A\otimes\imath_Y)\\
		& \ & [((T\otimes\imath_A\otimes\imath_Y)(\imath_Y\otimes\imath_A\otimes T)(\imath_Y\otimes T\otimes\imath_A)(\Delta_Y(y)\otimes \Delta_A(a)(1\otimes c)))(1\otimes t\otimes1\otimes 1)](1^2 \otimes y')\\
		&=&(\imath_A\otimes\imath_A\otimes\pi)(\imath_A\otimes (\varepsilon_Y\pi)\otimes\imath_A\otimes\imath_Y)\\
		& \  &[(T\otimes\imath_A\otimes\imath_Y)((\imath_Y\otimes(\imath_A\otimes T)(T\otimes\imath_A)(\imath_Y\otimes\Delta_A))(\Delta_Y(y)\otimes a))(1\otimes1\otimes c\otimes1)](1\otimes 1\otimes y')\\
		&\stackrel{\ref{paraequiv}(ii)}{=}&(\imath_A\otimes\imath_A\otimes\pi)(\imath_A\otimes(\varepsilon_Y\pi)\otimes\imath_A\otimes\imath_Y)\\
		& \ &[(T\otimes\imath_A\otimes\imath_Y)(\imath_Y\otimes(\Delta_A\otimes\imath_Y)T)(\Delta_Y(y)\otimes a)(1\otimes1\otimes c\otimes1)](1\otimes 1\otimes y')\\
		&=&((\imath_A\otimes\imath_A\otimes\pi)((\imath_A\otimes\varepsilon_Y)T'\otimes\imath_A\otimes\imath_Y)(\imath_Y\otimes(\Delta_A\otimes\imath_Y)T)(\Delta_Y(y)\otimes a))(1\otimes1\otimes c\otimes y')\\
		&=&(((\imath_A\otimes\varepsilon_Y)T'\otimes\imath_A\otimes\imath_Y)(\imath_Y\otimes\Delta_A\otimes\imath_Y)(\imath_Y\otimes(\imath_A\otimes \pi)T)(\Delta_Y(y)\otimes a))(1\otimes1\otimes c\otimes y')\\
		&=&(((\imath_A\otimes\varepsilon_Y)T'\otimes\imath_A\otimes\imath_Y)(\imath_Y\otimes(\Delta_A\otimes\imath_Y)T')(\Delta_Y(y)\otimes a))(1\otimes c\otimes y'),
	\end{eqnarray*}
	for all $y'\in Y$, $c\in A.$ Therefore, $Y$ is a partial $A$-comodule coalgebra via $\beta.$
\end{proof}
In this case, we say that  $Y$ is an \textit{induced partial $A$-comodule coalgebra.}

\begin{exa}
	Consider $N$ a subgroup of a group $G$ such that there is an idempotent element $t\in N$. Thus the group algebra $\Bbbk N$ is an $A_G$-comodule coalgebra via
	\begin{eqnarray*}
		\rho: \Bbbk N & \longrightarrow &  M(A_G \otimes \Bbbk N) \\
		h & \longmapsto & \displaystyle \sum_{g \in N}  \delta_{g} \otimes hg.
	\end{eqnarray*}
	Now, define
	\begin{eqnarray*}
		\pi: \Bbbk N &\rightarrow& \Bbbk N\\
		h &\mapsto& \left\{
		\begin{array}{rl}
			h, & \text{if $h \in Y$ },\\
			0, & \text{ otherwise, }
		\end{array} \right.
	\end{eqnarray*}
	where $Y = \Bbbk \langle t, \ t^{2}=1_{G}\rangle$. Then,  $\pi$ satisfies Proposition \ref{comodcoinduz}. Therefore, $Y$ is an \textit{induced partial $A_G$-comodule coalgebra via}
	\begin{eqnarray*}
		\beta: Y & \longrightarrow &  M(A_G \otimes Y) \\
		y & \longmapsto & (\imath_{A_G}\otimes\pi)\rho(y).
	\end{eqnarray*} Moreover, the induced coaction constructed is not global. 		
\end{exa}

\section{Partial Smash Coproduct}
\quad \ The construction of the smash coproduct associated with a comodule coalgebra, as a dual notion of the smash product, was initially formulated by R. Molnar in \cite{Molnar}. In \cite{Lydia} L. Delvaux generalized this notion to the context of multiplier Hopf algebras. In the case of Hopf algebras, E. Batista and J. Vercruysse constructed, in \cite{Batista}, the partial smash coproduct associated to a partial comodule coalgebra. In this section we will extend the results presented by L. Delvaux to the partial context, also generalizing the classic case of Hopf algebras.

Initially, let $Y$ be a partial $A$-comodule coalgebra. Based on \cite{Lydia}, we can define two linear maps $\overline{T}_1$, $\overline{T}_2$ on $(Y\otimes A)\otimes(Y\otimes A)$: for all $y,y'\in Y$ and $a,a'\in A$,
\begin{eqnarray*}
	\bullet\ \ \overline{T}_1((y\otimes a)\otimes (y'\otimes a'))=((\imath_Y\otimes T)(\Delta_Y(y)\otimes a_{(1)}))(1\otimes1\otimes y')\otimes a_{(2)}a'
\end{eqnarray*}
\begin{eqnarray*}
	\bullet\ \ \overline{T}_2((y'\otimes a')\otimes (y\otimes a))= y'y_{(1)}\otimes(a'\otimes1\otimes1)((T\otimes\imath_A)(y_{(2)}\otimes\Delta_A(a))).
\end{eqnarray*}

From Definition \ref{defcomodcopar} and Remark \ref{trocaLydiaparcial}, it follows that $\overline{T}_1$ and $\overline{T}_2$ are well defined, respectively. We will use the maps  $\overline{T}_1$ and $\overline{T}_2$  to define a comultiplication $\overline{\Delta}$ on $Y\otimes A.$

\begin{pro}\label{defDelta} Let $Y$ be a partial $A$-comodule coalgebra. Given $y\in Y$, $a\in A$, $\overline{\Delta}(y\otimes a)$ defines a multiplier in $M((Y\otimes A)\otimes(Y\otimes A))$ in the following way:
	\begin{eqnarray}\label{deltasmash}
		&&\hspace{-2.5cm}\overline{\Delta}(y\otimes a)((y'\otimes a')\otimes (y''\otimes a''))= \overline{T}_1((y\otimes a)\otimes (y''\otimes a''))((y'\otimes a')\otimes(1\otimes1));\nonumber\\
		&&\hspace{-2.5cm}((y'\otimes a')\otimes (y''\otimes a''))\overline{\Delta}(y\otimes a)= ((1\otimes1)\otimes(y''\otimes a''))\overline{T}_2((y'\otimes a')\otimes (y\otimes a)),
	\end{eqnarray}
	for all $y',y''\in Y$ and $a',a''\in A.$ Moreover, $\overline{\Delta}$ is coassociative.	
\end{pro}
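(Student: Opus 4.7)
The plan splits naturally into two independent claims: that $\overline{\Delta}(y\otimes a)$ is a well-defined element of $M((Y\otimes A)\otimes(Y\otimes A))$, and that the resulting $\overline{\Delta}$ is coassociative.

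For well-definedness, I would verify the multiplier compatibility
$$((y'\otimes a')\otimes(y''\otimes a''))\bigl(\overline{\Delta}(y\otimes a)((w'\otimes b')\otimes(w''\otimes b''))\bigr)=\bigl(((y'\otimes a')\otimes(y''\otimes a''))\overline{\Delta}(y\otimes a)\bigr)((w'\otimes b')\otimes(w''\otimes b''))$$
directly, by substituting the defining formulas (\ref{deltasmash}) on each side and unfolding the explicit expressions for $\overline{T}_1$ and $\overline{T}_2$. The covering of $(\imath_Y\otimes T)(\Delta_Y(y)\otimes a_{(1)})$ is guaranteed by item (ii) of Definition \ref{defcomodcopar}, whereas the covering of $(T\otimes \imath_A)(y_{(2)}\otimes\Delta_A(a))$ on the first tensor factor is provided by Remark \ref{trocaLydiaparcial}. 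Writing the result in Sweedler-like notation, both sides collapse to the same element $y'y_{(1)}w'\otimes a'y_{(2)}^{(-1)}a_{(1)}b'\otimes y''y_{(2)}^{(0)}w''\otimes a''a_{(2)}b''$ of $(Y\otimes A)\otimes(Y\otimes A)$, which establishes the multiplier condition. Neither item (iii) nor (iv) of Definition \ref{defcomodcopar} is needed at this stage.

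For coassociativity, the plan is to test $(\overline{\Delta}\otimes\imath)\overline{\Delta}(y\otimes a)=(\imath\otimes\overline{\Delta})\overline{\Delta}(y\otimes a)$ in $M((Y\otimes A)^{\otimes 3})$ by covering each outer slot with a generic element of $Y\otimes A$ so that both sides become bona fide elements of $(Y\otimes A)^{\otimes 3}$. Unfolding each side step by step produces iterated expressions involving $\Delta_Y$ applied to $y$ and $\Delta_A$ applied to $a$: the $Y$-side rearrangement is governed by item (iii) of Definition \ref{defcomodcopar}, which plays the role of $(\imath_A\otimes\Delta_Y)T=(T\otimes\imath_Y)(\imath_Y\otimes T)(\Delta_Y\otimes\imath_A)$, while the $A$-side rearrangement is governed by item (iv), which in the partial setting replaces the classical identity $(\imath_A\otimes T)(T\otimes\imath_A)(\imath_Y\otimes\Delta_A)=(\Delta_A\otimes\imath_Y)T$ of Proposition \ref{paraequiv}(ii). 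The $\varepsilon_Y$-correction produced by item (iv) is absorbed via coassociativity of $\Delta_Y$ on $Y$, namely $(\imath_Y\otimes\varepsilon_Y\otimes\imath_Y)(\Delta_Y\otimes\imath_Y)\Delta_Y=\Delta_Y$.

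The main obstacle I anticipate is the bookkeeping of coverings. Since $\overline{\Delta}(y\otimes a)$ is only a multiplier and the ingredients $\overline{T}_1,\overline{T}_2$ themselves require at least one covered slot, the coassociativity equation has to be recast carefully as an identity inside a concrete six-fold tensor product before the partial-axiom identities (iii) and (iv) can be applied. The adaptation of Delvaux's argument from \cite{Lydia} then reduces to inserting the $\varepsilon_Y$-factor coming from item (iv) at the correct step and verifying that it cancels against one of the legs of $\Delta_Y(y)$ so that the two iterations collapse to the same canonical expression; this is the place where I expect the partial-to-global passage to demand the most delicate calculation.
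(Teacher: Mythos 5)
Your proposal is correct and follows essentially the same route as the paper: the multiplier condition is checked directly from associativity of the product (with the coverings supplied by Definition \ref{defcomodcopar}(ii) and Remark \ref{trocaLydiaparcial}), and coassociativity is obtained by covering the outer slots and collapsing both iterated coproducts to a common expression using items (iii) and (iv) of Definition \ref{defcomodcopar}, with the $\varepsilon_Y$-leg of item (iv) cancelled against a factor of $\Delta_Y(y)$ inserted via the counit identity --- exactly the step marked $(\ast)$ in the paper's computation. The only nitpick is that the identity $(\imath_Y\otimes\varepsilon_Y\otimes\imath_Y)(\Delta_Y\otimes\imath_Y)\Delta_Y=\Delta_Y$ you invoke is the counit axiom combined with coassociativity, not coassociativity alone.
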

\begin{proof}
	Using product associativity in $(Y\otimes A)\otimes(Y\otimes A)$ is straightforward to verify $\overline{\Delta}(y\otimes a)\in M((Y\otimes A)\otimes(Y\otimes A)).$ 
		
	We will show the coassociativity of $\overline{\Delta}$, for every $y,y',y''\in Y$, $a,a',a''\in A.$ On the one hand,
	\begin{eqnarray*}
		& \ &(\imath\otimes\overline{\Delta})((y'\otimes a'\otimes 1\otimes 1)\overline{\Delta}(y\otimes a))(1\otimes 1\otimes 1\otimes 1\otimes y''\otimes a'')\\
		&=&(\imath\otimes\overline{\Delta})(y'y_{(1)}\otimes(a'\otimes 1\otimes1)((T\otimes\imath_A)(y_{(2)}\otimes\Delta_A(a)))(1\otimes 1\otimes 1\otimes 1\otimes y''\otimes a'')\\
		&\stackrel{\ref{trocaLydiaparcial}}{=}&(\imath\otimes\overline{\Delta})(y'y_{(1)}\otimes a'{y_{(2)}}^{(-1)}a_{(1)}\otimes {y_{(2)}}^{(0)}\otimes a_{(2)})(1\otimes 1\otimes 1\otimes 1\otimes y''\otimes a'')\\
		&=&(y'y_{(1)}\otimes a'{y_{(2)}}^{(-1)}a_{(1)}\otimes\overline{\Delta}({y_{(2)}}^{(0)}\otimes a_{(2)})(1\otimes 1\otimes y''\otimes a'')\\
		&=&y'y_{(1)}\otimes a'{y_{(2)}}^{(-1)}a_{(1)}\otimes((\imath_Y\otimes T)\Delta_Y({y_{(2)}}^{(0)})\otimes a_{(2)})(1\otimes 1\otimes y'')\otimes a_{(3)}a''\\
		&=&[(\imath_{Y\otimes A\otimes Y}\otimes T\otimes\imath_A)(\imath_Y\otimes\imath_A\otimes\Delta_Y\otimes\imath_{A\otimes A})(y'y_{(1)}\otimes (a'\otimes 1^2)\\
		& \ &((T\otimes\imath_A)({y_{(2)}}\otimes\Delta_A(a_{(1)}))\otimes a_{(2)}a''))](1 \otimes 1 \otimes 1 \otimes 1 \otimes y''\otimes1)\\
		&=&(1\otimes a'\otimes 1^4)[(\imath_{Y\otimes A\otimes Y}\otimes T\otimes\imath_A)(\imath_Y\otimes(\imath_A\otimes\Delta_Y)T\otimes\imath_{A\otimes A})\\
		& \ &(y'y_{(1)}\otimes y_{(2)}\otimes\Delta(a_{(1)})\otimes a_{(2)}a'')](1 \otimes 1 \otimes 1 \otimes 1\otimes y''\otimes1)\\
		&\stackrel{\ref{defcomodcopar}(iii)}{=}&(1\otimes a'\otimes 1^4)[(\imath_{Y\otimes A\otimes Y}\otimes T\otimes\imath_A)(\imath_Y\otimes(T\otimes\imath_Y)(\imath_Y\otimes T)(\Delta_Y\otimes\imath_A)\otimes\imath_{A\otimes A})\\
		& \ &(y'y_{(1)}\otimes y_{(2)}\otimes\Delta(a_{(1)})\otimes a_{(2)}a'')](1^4\otimes y''\otimes1)\\
		&=&(1\otimes a'\otimes 1^4)[(\imath_Y\otimes T\otimes\imath_{A\otimes  Y\otimes A})(\imath_Y\otimes\imath_Y \otimes \imath_A\otimes T\otimes\imath_A)\\
		& \ &(\imath_Y\otimes(\imath_Y\otimes T)(\Delta_Y\otimes\imath_A)\otimes\imath_{A\otimes A})(y'y_{(1)}\otimes y_{(2)}\otimes\Delta_A(a_{(1)})\otimes a_{(2)}a'')](1^4\otimes y''\otimes1)\\
		&=&(1\otimes a'\otimes 1^4)[(\imath_Y\otimes T\otimes\imath_{A\otimes Y\otimes A})(\imath_Y\otimes\imath_Y \otimes (\imath_A\otimes T)(T\otimes\imath_A)\otimes\imath_A)\\
		& \ &(\imath_Y\otimes\Delta_Y\otimes\imath_A\otimes\imath_A\otimes\imath_A)((y'\otimes1)\Delta_Y(y)\otimes\Delta(a_{(1)})\otimes a_{(2)}a'')](1^4\otimes y''\otimes1)\\.
		&=&(1\otimes a'\otimes 1^4)[(\imath_Y\otimes T\otimes\imath_{A\otimes Y\otimes A})(\imath_{Y\otimes Y} \otimes (\imath_A\otimes T)(T\otimes\imath_A)(\imath_Y\otimes\Delta_A)\otimes\imath_A)\\
		& \ &(y'y_{(1)}\otimes\Delta_Y(y_{(2)})\otimes a_{(1)}\otimes a_{(2)}a'')](1^4\otimes y''\otimes1).
	\end{eqnarray*}	
	
	On the other hand, denoting $\overline{T}_1(y\otimes a\otimes y''\otimes a'')=x\otimes e\otimes z\otimes e'\in Y\otimes A\otimes Y\otimes A,$
	\begin{eqnarray*}
		& \ &(y'\otimes a'\otimes1\otimes 1\otimes 1\otimes 1)(\overline{\Delta}\otimes\imath)(\overline{\Delta}(y\otimes a)(1\otimes1\otimes y''\otimes a''))\\
		&=&(y'\otimes  a'\otimes1\otimes1\otimes1\otimes1)(\overline{\Delta}\otimes\imath)(\overline{T}_1(y\otimes a\otimes y''\otimes a''))\\
		&=&(y'\otimes a'\otimes1\otimes1)\overline{\Delta}(x\otimes e)\otimes z\otimes e'\\
		&=&y'x_{(1)}\otimes (a'\otimes1\otimes1)((T\otimes\imath_A)(x_{(2)}\otimes\Delta_A(e)))\otimes z\otimes e'\\
		&=&(y'\otimes a'\otimes1^4)[(\imath_Y\otimes T\otimes\imath_A\otimes\imath_Y\otimes\imath_A)(\Delta_Y\otimes\Delta_A\otimes\imath_Y\otimes\imath_A)(x\otimes e\otimes z\otimes e')]\\
		&=&(y'\otimes a'\otimes1^4)[(\imath_Y\otimes T\otimes\imath_A\otimes\imath_Y\otimes\imath_A)(\Delta_Y\otimes\Delta_A\otimes\imath_Y\otimes\imath_A)\\
		& \ &((\imath_Y\otimes T)(\Delta_Y(y)\otimes a_{(1)})(1\otimes1\otimes y'')\otimes a_{(2)}a'')]\\
		&=&(y'\otimes a'\otimes1^4)[(\imath_Y\otimes T\otimes\imath_{A\otimes Y\otimes A})(\imath_{Y\otimes Y}\otimes\Delta_A\otimes\imath_{Y\otimes A})\\
		& \ &(\Delta_Y\otimes T\otimes\imath_A)(\Delta_Y(y)\otimes a_{(1)}\otimes a_{(2)}a'')](1^4\otimes y''\otimes 1)\\
		&=&(y'\otimes a'\otimes1^4)[(\imath_Y\otimes T\otimes\imath_{A\otimes Y\otimes A})(\imath_Y\otimes\imath_Y\otimes\Delta_A\otimes\imath_Y\otimes\imath_A)(\imath_Y\otimes\imath_Y\otimes T\otimes\imath_A)\\
		& \ &((\Delta_Y\otimes\imath_Y)\Delta_Y\otimes\imath_A\otimes\imath_A)(y\otimes a_{(1)}\otimes a_{(2)}a'')](1^4\otimes y''\otimes 1)\\
		&=&(y'\otimes a'\otimes1^4)[(\imath_Y\otimes T\otimes\imath_{A\otimes Y\otimes A})(\imath_Y\otimes\imath_Y\otimes\Delta_A\otimes\imath_Y\otimes\imath_A)(\imath_Y\otimes\imath_Y\otimes T\otimes\imath_A)\\
		& \ &((\imath_Y\otimes\Delta_Y)\Delta_Y\otimes\imath_A\otimes\imath_A)(y\otimes a_{(1)}\otimes a_{(2)}a'')](1^4\otimes y''\otimes 1)\\
		&=&(y'\otimes a'\otimes1^4)[(\imath_Y\otimes T\otimes\imath_{A\otimes Y\otimes A})(\imath_Y\otimes\imath_Y\otimes\Delta_A\otimes\imath_Y\otimes\imath_A)(\imath_Y\otimes(\imath_Y\otimes T)\\
		& \ &(\Delta_Y\otimes\imath_A)\otimes\imath_A)(\Delta_Y(y)\otimes a_{(1)}\otimes a_{(2)}a'')](1^4\otimes y''\otimes 1)\\	
		&=&(1\otimes a'\otimes1^4)[(\imath_Y\otimes (T\otimes\imath_A)(\imath_Y\otimes\Delta_A)\otimes\imath_Y\otimes\imath_A)(y'y_{(1)}\otimes(\imath_Y\otimes T)\\
		& \ &(\Delta_Y\otimes\imath_A)(y_{(2)}\otimes a_{(1)})\otimes a_{(2)}a'')](1^4\otimes y''\otimes 1)\\	
		&\stackrel{(\ast)}{=}&(1\otimes a'\otimes1^4)\{(\imath_Y\otimes [((\imath_A\otimes\imath_Y\otimes\varepsilon_Y)((\imath_A\otimes\Delta_Y)T)\otimes\imath_A)(\imath_Y\otimes\Delta_A)]\otimes\imath_Y\otimes\imath_A)\\
		& \ &(y'y_{(1)}\otimes(\imath_Y\otimes T)(\Delta_Y\otimes\imath_A)(y_{(2)}\otimes a_{(1)})\otimes a_{(2)}a'')\}(1^4\otimes y''\otimes 1)\\
		&\stackrel{\ref{defcomodcopar}(iii)}{=}&(1\otimes a'\otimes1^4)\{(\imath_Y\otimes [((\imath_A\otimes\imath_Y\otimes\varepsilon_Y)(T\otimes\imath_Y)(\imath_Y\otimes T)(\Delta_Y\otimes\imath_A)\otimes\imath_A)(\imath_Y\otimes\Delta_A)]\otimes\imath_Y\otimes\imath_A)\\
		& \ &(y'y_{(1)}\otimes(\imath_Y\otimes T)(\Delta_Y(y_{(2)})\otimes a_{(1)})\otimes a_{(2)}a'')\}(1^4\otimes y''\otimes 1)\\
		&=&(1\otimes a'\otimes1^4)[(\imath_Y\otimes (\imath_A\otimes\imath_Y\otimes\varepsilon_Y)(T\otimes\imath_Y)(\imath_Y\otimes T)\otimes\imath_A\otimes\imath_Y\otimes\imath_A)\\
		& \ &(\imath_Y\otimes\Delta_Y\otimes\Delta_A\otimes\imath_Y\otimes\imath_A)(y'y_{(1)}\otimes(\imath_Y\otimes T)(\Delta_Y(y_{(2)})\otimes a_{(1)})\otimes a_{(2)}a'')](1^4\otimes y''\otimes 1)\\
		&=&(1\otimes a'\otimes1^4)[(\imath_Y\otimes (\imath_A\otimes\imath_Y\otimes\varepsilon_Y)(T\otimes\imath_Y)(\imath_Y\otimes T\otimes\imath_A\otimes\imath_Y\otimes\imath_A))\\
		& \ &(\imath_Y\otimes\imath_Y\otimes\imath_Y\otimes\Delta_A\otimes\imath_Y\otimes\imath_A)(y'y_{(1)}\otimes(\Delta_Y\otimes T)(\Delta_Y(y_{(2)})\otimes a_{(1)})\otimes a_{(2)}a'')](1^4\otimes y''\otimes 1)\\
		&=&(1\otimes a'\otimes1^4)[(\imath_Y\otimes (\imath_A\otimes\imath_Y\otimes\varepsilon_Y)(T\otimes\imath_Y)(\imath_Y\otimes T)\otimes\imath_A\otimes\imath_Y\otimes\imath_A)\\
		& \ &(\imath_Y\otimes\imath_Y\otimes\imath_Y\otimes(\Delta_A\otimes\imath_Y)T\otimes\imath_A)(y'y_{(1)}\otimes(\Delta_Y\otimes\imath_Y)\Delta_Y(y_{(2)})\otimes a_{(1)}\otimes a_{(2)}a'')](1^4\otimes y''\otimes 1)\\
		&=&(1\otimes a'\otimes1^4)[(\imath_Y\otimes (\imath_A\otimes\imath_Y\otimes\varepsilon_Y)(T\otimes\imath_Y)(\imath_Y\otimes T)\otimes\imath_A\otimes\imath_Y\otimes\imath_A)\\
		& \ &(\imath_Y\otimes\imath_Y\otimes\imath_Y\otimes(\Delta_A\otimes\imath_Y)T\otimes\imath_A)(y'y_{(1)}\otimes(\imath_Y\otimes\Delta_Y)\Delta_Y(y_{(2)})\otimes a_{(1)}\otimes a_{(2)}a'')](1^4\otimes y''\otimes 1)\\
		&=&(1\otimes a'\otimes1^4)[(\imath_Y\otimes T (\imath_Y\otimes(\imath_A\otimes\varepsilon_Y)T)\otimes\imath_A\otimes\imath_Y\otimes \imath_A)(\imath_Y\otimes\imath_Y\otimes\imath_Y\otimes(\Delta_A\otimes\imath_Y)T\otimes\imath_A)\\
		& \ &(y'y_{(1)}\otimes(\imath_Y\otimes\Delta_Y\otimes\imath_A)(\Delta_Y(y_{(2)})\otimes a_{(1)})\otimes a_{(2)}a'')](1^4\otimes y''\otimes 1)\\
		&=&(1\otimes a'\otimes1^4)[(\imath_Y\otimes T \otimes\imath_A\otimes\imath_Y\otimes\imath_A)(\imath_Y\otimes\imath_Y\otimes(\imath_A\otimes\varepsilon_Y)T\otimes\imath_A\otimes\imath_Y\otimes \imath_A)\\
		& \ &(\imath_Y\otimes\imath_Y\otimes\imath_Y\otimes(\Delta_A\otimes\imath_Y)T\otimes\imath_A)
		(\imath_{Y\otimes Y}\otimes\Delta_Y\otimes\imath_{A \otimes A})(y'y_{(1)}\otimes\Delta_Y(y_{(2)})\otimes a_{(1)}\otimes a_{(2)}a'')](1^4\otimes y''\otimes 1)\\
		&=&(1\otimes a'\otimes1^4)[(\imath_Y\otimes T \otimes\imath_A\otimes\imath_Y\otimes\imath_A)(\imath_Y\otimes\imath_Y\otimes(((\imath_A\otimes\varepsilon_Y)T\otimes\imath_A\otimes\imath_Y)\\
		& \ &(\imath_Y\otimes(\Delta_A\otimes\imath_Y)T)(\Delta_Y\otimes\imath_A))\otimes\imath_A)(y'y_{(1)}\otimes\Delta_Y(y_{(2)})\otimes a_{(1)}\otimes a_{(2)}a'')](1^4\otimes y''\otimes 1)\\
		&\stackrel{\ref{defcomodcopar}(iv)}{=}&(1\otimes a'\otimes1^4)[(\imath_Y\otimes T \otimes\imath_A\otimes\imath_Y\otimes\imath_A)(\imath_Y\otimes\imath_Y\otimes(\imath_A\otimes T)(T\otimes\imath_A)(\imath_Y\otimes\Delta_A)\otimes\imath_A)\\
		& \ &(y'y_{(1)}\otimes\Delta_Y(y_{(2)})\otimes a_{(1)}\otimes a_{(2)}a'')](1^4\otimes y''\otimes 1).  
	\end{eqnarray*}
In $(\ast)$,\quad we used   $T\otimes\imath_A=(\imath_A\otimes\imath_Y\otimes\imath_A)(T\otimes\imath_A)=(\imath_A\otimes\imath_Y\otimes\varepsilon_Y)(\imath_A\otimes\Delta_Y)T\otimes\imath_A.$	
	\end{proof}

\begin{rem} \label{peixe_regular}Let $Y$ be a partial  $A$-comodule coalgebra. Given $y,y'\in Y$, $a,a'\in A$, 
	\begin{enumerate}
		\item[(1)]
	\begin{eqnarray*}
	\overline{\Delta}(y\otimes a)((y'\otimes a')\otimes (1\otimes 1))&=& y_{(1)}y'\otimes (T\otimes\imath_A)(y_{(2)}\otimes\Delta_A(a))(a'\otimes 1 \otimes 1)\\
	&\stackrel{\ref {trocaLydiaparcial}}{=}& y_{(1)}y'\otimes y_{(2)}^{(-1)}a_{(1)}a'\otimes y_{(2)}^{(0)}\otimes a_{(2)}, 
	\end{eqnarray*}
\item[(2)]	\begin{eqnarray*}
	((1\otimes 1)\otimes (y'\otimes a'))\overline{\Delta}(y\otimes a)&=& (1\otimes 1\otimes y')(\imath_Y\otimes T)(\Delta_Y(y)\otimes a_{(1)})\otimes a'a_{(2)},
\end{eqnarray*}
\end{enumerate}
then $\overline{\Delta}(y\otimes a)((y'\otimes a')\otimes (1\otimes 1))$ and 
	$((1\otimes 1)\otimes (y'\otimes a'))\overline{\Delta}(y\otimes a)$
belongs to $Y\otimes A\otimes Y\otimes A.$ 
\end{rem}

It still remains to show that $\overline{\Delta}$ is a homomorphism. First of all, we will remember the definition of partial comodule algebra presented in \cite{Grasiela}.
\begin{defi}\cite{Grasiela} We call $Y$ a \emph{partial $A$-comodule algebra} if $\rho: Y \longrightarrow M(A\otimes Y)$ is an injective homomorphism  and $E\in M(A\otimes Y)$ is an idempotent such that $(A\otimes 1)E\subseteq A\otimes M(Y)$ and $E(A\otimes 1)\subseteq A\otimes M(Y)$, satisfying
	\begin{enumerate}
		\item[(i)] $\rho(Y)(A\otimes 1)\subseteq E(A\otimes Y)$ and $(A\otimes 1)\rho(Y)\subseteq(A\otimes Y)E$
		\item[(ii)] $(\imath_A\otimes \rho)(\rho(y)) = (1\otimes E)(\Delta_A\otimes \imath_Y)(\rho(y))$,
	\end{enumerate}
	for all $y\in Y$. In this case, $\rho$ is called a partial coaction of $A$ on $Y$. We say that the coaction $\rho$ is \emph{symmetric} if, besides the above conditions,  $\rho$ also satisfies
	\begin{enumerate}
		\item[(iii)] $( \imath_A \otimes\rho)(\rho(y))= (\Delta_A\otimes \imath_Y)(\rho(y))(1\otimes E)$,  for all $y\in Y$.
	\end{enumerate}
	\label{def_comoalgparcarcial_multip} 	
\end{defi}

\begin{rem}\label{rhoigual_rhoe} When $Y$ is a partial $A$-comodule algebra,  $E\rho(y)=\rho(y) \ \ \mbox{and} \ \ \rho(y)E=\rho(y),$
	for all $y\in Y$.
		\end{rem}

\begin{defi}\label{defbialgebra}Let $Y$ and $A$ be two regular multiplier Hopf algebras. We say that $Y$ is a \textit{partial} $A$-\textit{comodule bialgebra} if there exists a linear map $\rho:Y\longmapsto M(A\otimes Y)$ defining a structure of partial comodule algebra and partial comodule coalgebra.
\end{defi}
We say that $Y$ is a \textit{symmetric partial} $A$-\textit{comodule bialgebra} when both structures satisfy its symmetric condition respectively. 	

\begin{rem} 	
	\begin{itemize}
		\item [(1)] If $Y$ is a symmetric partial  $A$-comodule algebra, then $\rho:{Y}\longmapsto M(A\otimes Y)$ is a homomorphism that can be extended uniquely to $M(Y)$ and $\rho(1_{M(Y)})=E$, by Proposition 3.2.3 of \cite{Grasiela}.
		\item [(2)] If $A$ is commutative, then the linear map $T$ is a homomorphism.	
	\end{itemize}
	
\end{rem}
An important consequence of item (2) of the above remark is the following result.
\begin{pro}\label{Propexten}Let $Y$ be a symmetric partial $A$-comodule bialgebra such that $A$ is commutative. Then the linear map $T$ can be extended uniquely to $M(Y\otimes A)$. Moreover, if we denote this extension by $\tilde{T}$, then $\tilde{T}(1_{M(Y\otimes A)})=E.$
\end{pro}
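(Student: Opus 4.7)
The plan is to produce $\tilde T$ via the standard multiplier-algebra extension theorem applied to the homomorphism $T$, and then identify $\tilde T(1_{M(Y\otimes A)})$ with $E$ using the characterizing relations of the extension. By item (2) of the preceding remark, the commutativity of $A$ makes $T: Y\otimes A \to A\otimes Y$, $T(y\otimes a) = \rho(y)(a\otimes 1)$, a homomorphism of nondegenerate algebras, and the partial comodule algebra axiom (Definition \ref{def_comoalgparcarcial_multip}(i)) gives $T(Y\otimes A) \subseteq E(A\otimes Y)$ while $(A\otimes Y)T(Y\otimes A) \subseteq (A\otimes Y)E$.

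Using local units in $A$ and in $Y$, together with Remark \ref{rhoigual_rhoe} ($E\rho(y)=\rho(y)=\rho(y)E$) and the symmetric partial coaction conditions, one would check
\begin{eqnarray*}
T(Y\otimes A)(A\otimes Y) = E(A\otimes Y) \qquad\text{and}\qquad (A\otimes Y)T(Y\otimes A) = (A\otimes Y)E.
\end{eqnarray*}
This is the nondegeneracy needed to extend $T$ uniquely to an algebra homomorphism $\tilde T: M(Y\otimes A) \to M(A\otimes Y)$, characterized, for all $Z \in M(Y\otimes A)$ and $w \in Y\otimes A$, by
\begin{eqnarray*}
\tilde T(Z)\cdot T(w) = T(Zw), \qquad T(w)\cdot \tilde T(Z) = T(wZ).
\end{eqnarray*}

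For $Z = 1_{M(Y\otimes A)}$ these identities say that $\tilde T(1)$ is the unique multiplier in $M(A\otimes Y)$ acting as a two-sided identity on the image of $T$; I would then verify that $E$ has this property. On the left, $E\cdot T(y\otimes a) = (E\rho(y))(a\otimes 1) = \rho(y)(a\otimes 1) = T(y\otimes a)$. For the right, I would pick local units $e\in Y$ and $f \in A$ with $(y\otimes a)(e\otimes f) = y\otimes a$ and use the homomorphism property to write $T(y\otimes a) = T(y\otimes a)\,\rho(e)(f\otimes 1)$; then $\rho(e)E = \rho(e)$, combined with the symmetric partial coaction identity of Definition \ref{def_comoalgparcarcial_multip}(iii), allows $E$ to be absorbed on the right, yielding $T(y\otimes a)\,E = T(y\otimes a)$. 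By uniqueness of the multiplier with this two-sided identity property, $\tilde T(1_{M(Y\otimes A)}) = E$.

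The main difficulty is this last right-hand identity $T(y\otimes a)E = T(y\otimes a)$: unlike the left case, which follows immediately from $E\rho(y) = \rho(y)$, on the right the idempotent $E$ must be commuted past $(a\otimes 1)$, which it does not do in $M(A\otimes Y)$ in general. This is precisely where the symmetry hypothesis on $\rho$ enters, explaining why the symmetric version of partial comodule algebra is required in the statement.
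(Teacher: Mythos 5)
There is a genuine gap at the heart of your argument: the ``standard multiplier-algebra extension theorem'' does not apply to $T$ here, and the identification $\tilde T(1_{M(Y\otimes A)})=E$ does not follow from the uniqueness you invoke. The standard theorem extends a homomorphism $f\colon B\to M(C)$ precisely when it is nondegenerate, i.e.\ $f(B)C=C$, and it then forces $\tilde f(1)=1_{M(C)}$. You correctly compute $T(Y\otimes A)(A\otimes Y)=E(A\otimes Y)$, but this is strictly smaller than $A\otimes Y$ in general (e.g.\ in Example \ref{exemcomoparcviah}, where $E=h\otimes 1$ with $h$ the indicator of a proper subgroup), so $T$ is \emph{not} nondegenerate into $M(A\otimes Y)$ and the theorem gives nothing. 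Moreover, the relations $\tilde T(Z)T(w)=T(Zw)$ and $T(w)\tilde T(Z)=T(wZ)$ only pin down $\tilde T(Z)$ on $E(A\otimes Y)$ (resp.\ $(A\otimes Y)E$), not as a multiplier of all of $A\otimes Y$; and your claim that $\tilde T(1)$ is ``the unique multiplier acting as a two-sided identity on the image of $T$'' is false, since $1_{M(A\otimes Y)}$ and $E$ both have that property. Nothing in your setup selects $E$ over $1_{M(A\otimes Y)}$.

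The paper instead constructs $\tilde T$ explicitly so that it is supported on $E$: by Proposition 3.14 of \cite{Grasiela} (which is where the symmetric partial comodule \emph{algebra} structure is used) one has $\rho(Y)(A\otimes1)=E(A\otimes Y)$ and $(A\otimes1)\rho(Y)=(A\otimes Y)E$, so every $E(b\otimes y)$ can be written as $T(x\otimes a)$, and one defines $\overline{\tilde T(m)}(b\otimes y):=T(m(x\otimes a))$, with the analogous formula for the right component. Well-definedness (independence of the chosen $x\otimes a$) is a point that must be checked and uses commutativity of $A$ together with $E^2=E$; with this definition $\tilde T(1)=E$ is immediate, and uniqueness is asserted, and provable, only among homomorphic extensions normalized by $S(1_{M(Y\otimes A)})=E$. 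Incidentally, the step you single out as the main difficulty, $T(y\otimes a)E=T(y\otimes a)$, is actually easy: commutativity of $A$ gives $\rho(y)(a\otimes1)=(a\otimes1)\rho(y)$, hence $T(y\otimes a)E=(a\otimes1)\rho(y)E=(a\otimes1)\rho(y)=T(y\otimes a)$ by Remark \ref{rhoigual_rhoe}; the symmetry hypothesis is consumed elsewhere, in the cited results from \cite{Grasiela}.
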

\begin{proof}
	By Proposition 3.14 of \cite{Grasiela},  $\rho(Y)(A\otimes 1)=E(A\otimes Y)$ and $(A\otimes 1)\rho(Y)=(A\otimes Y)E$, where $E$ is given by the structure of partial $A$-comodule algebra on $Y.$ Define
	\begin{eqnarray*}
		\tilde{T}: M(Y\otimes A) &\longrightarrow & M(A\otimes Y)\\
		m &\longmapsto & \tilde{T}(m)=(\overline{\tilde{T}(m)},\overline{\overline{\tilde{T}(m)}})
	\end{eqnarray*}
	such that, for all $y\in Y$, $b\in A$, $\overline{\tilde{T}(m)}(b\otimes y)=T(m(x\otimes a))$ and $\overline{\overline{\tilde{T}(m)}}(b\otimes y)=T((z\otimes c)m),$
	where $E(b\otimes y)=\rho(x)(a\otimes 1)$ and $(b\otimes y)E=(c\otimes 1)\rho(z).$		
	
	$\bullet$\ \ Since $A$ is commutative, $\rho(x)(a\otimes 1)=(a\otimes 1)\rho(x)$, for all $a\in A$ and $x\in Y,$ and $E$ is idempotent, it follows that $\tilde{T}$ is well defined.
	
			$\bullet$\ \  $\tilde{T}$ extends $T$: denoting $E(c'\otimes y')=\rho(x)(a\otimes 1)$, we obtain
	\begin{eqnarray*}
		\overline{\tilde{T}(y\otimes c)}(c'\otimes y')&=& T((y\otimes c)(x\otimes a))\\
		&=&\rho(yx)(ca\otimes 1)\\
		&=&\rho(y)\rho(x)(a\otimes 1)(c\otimes 1)\\
		&=&\rho(y)E(c'\otimes y')(c\otimes 1)\\
		&\stackrel{\ref{rhoigual_rhoe}}{=}&\rho(y)(c'c\otimes y')\\
		&=&T(y\otimes c)(c'\otimes y')\\
		&=&\overline{T(y\otimes c)}(c'\otimes y'),
	\end{eqnarray*}
	for all $y'\in Y$, $c'\in A.$ Therefore $\overline{\tilde{T}(y\otimes c)}=\overline{T(y\otimes c)}$. Similarly, one can show that  $\overline{\overline{\tilde{T}(y\otimes c)}}=\overline{\overline{{T}(y\otimes c)}}$. Then, $\tilde{T}(y\otimes c)=T(y\otimes c)$, for all $y\in Y$, $c\in A.$
	
	$\bullet$\ \ $\tilde{T}(1_{M(Y\otimes A)})=E$: denoting $E(b\otimes y)=\rho(x)(a\otimes 1),$
	\begin{eqnarray*}
		\overline{\tilde{T}(1_{M(Y\otimes A)})}(b\otimes y)		=T(x\otimes a)=E(b\otimes y),
	\end{eqnarray*}
	for all $b\in A$, $y\in Y.$ Analogously, $\overline{\overline{\tilde{T}(1_{M(Y\otimes A)})}}=\overline{\overline{E}}.$
	
	$\bullet$\ \ $\tilde{T}$ is a homomorphism: consider $m,n\in M(Y\otimes A)$, denoting $E(b\otimes y)=\rho(x)(a\otimes 1),$ $n(x\otimes a)=z\otimes c$ and $\rho(z)(c\otimes 1)=w\otimes d$ we have
	 
		\begin{eqnarray*}
			\overline{\tilde{T}(m){\tilde{T}(n)}}(b\otimes y)			&=&\overline{\tilde{T}(m)}(T(n(x\otimes a)))\\
			&=&\overline{\tilde{T}(m)}(T(z\otimes c))\\
			&=&\overline{\tilde{T}(m)}(\rho(z)(c\otimes 1))\\
			&=&\overline{\tilde{T}(m)}(w\otimes d)\\
			&=&T(m(z\otimes c))\\
	&=&T(mn(x\otimes a))\\
	&=&	\overline{\tilde{T}(mn)}(b\otimes y),		
	\end{eqnarray*}
	for all $b\in A$, $y\in Y,$ using that $E(w\otimes d)=E\rho(z)(c\otimes 1)=\rho(z)(c\otimes 1).$ Thus $\overline{\tilde{T}(mn)}=\overline{\tilde{T}(m){\tilde{T}(n)}}.$ Similarly we show that the second component is a homomorphism.
	
	$\bullet$\ \ $\tilde{T}$ is unique: suppose that $S$ is a homomorphism that also extends $T$ and $S(1_{M(Y\otimes A)})=E$. Thus, if $E(b\otimes y)=T(x\otimes a),$ then
	\begin{eqnarray*}
		\overline{\tilde{T}(m)}(b\otimes y)&=&T(m(x\otimes a))\\
		&=&S({m}(x\otimes a))\\
		&=&S({m})S(x\otimes a)\\
		&=&S({m})T(x\otimes a)\\
		&=&S({m})E(b\otimes y)\\
		&=&S({m})S(1_{M(Y\otimes A)})(b\otimes y)\\
		&=&S({m})(b\otimes y)\\
		&=&\overline{S({m})}(b\otimes y),	
	\end{eqnarray*}
	for all $b\in A$, $y\in Y$. Then, $\overline{\tilde{T}(m)}=\overline{S({m})}.$ The same argument applies for the second component. 
	\end{proof}

\begin{rem} Under the conditions of Proposition \ref{Propexten}, we have $\tilde{T}(1_{M(Y)}\otimes c)=(c\otimes 1)E,$ for all $c\in A.$
	\begin{eqnarray*}
		\overline{\tilde{T}(1_{M(Y)}\otimes c)}(b\otimes y)&=&T((1_{M(Y)}\otimes c)(x\otimes a))\\
		&=&T(x\otimes ca)\\
		&=&\rho(x)(ca\otimes 1)\\
		&=&(c\otimes 1)\rho(x)(a\otimes 1)\\
		&=&(c\otimes 1)E(b\otimes y)\\
		&=&\overline{(c\otimes 1)E}(b\otimes y),
	\end{eqnarray*}
	for every $b\in A$, $y\in Y.$ An analogous argument holds for the second component.\end{rem}

\begin{thm}\label{thmultiplierbialg}
	Let $Y$ be a symmetric partial $A$-comodule bialgebra such that $A$ is commutative. Then $Y\otimes A$ is a multiplier bialgebra.
\end{thm}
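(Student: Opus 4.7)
The plan is to check the three defining conditions of a multiplier bialgebra for $(Y\otimes A,\overline{\Delta})$: (a) $Y\otimes A$ is a nondegenerate algebra and $\overline{\Delta}$ is a nondegenerate homomorphism into $M((Y\otimes A)\otimes(Y\otimes A))$; (b) the four subset inclusions
$$\overline{\Delta}(Y\otimes A)((Y\otimes A)\otimes 1),\ \overline{\Delta}(Y\otimes A)(1\otimes (Y\otimes A)),\ ((Y\otimes A)\otimes 1)\overline{\Delta}(Y\otimes A),\ (1\otimes(Y\otimes A))\overline{\Delta}(Y\otimes A)$$
are all contained in $(Y\otimes A)\otimes(Y\otimes A)$; and (c) coassociativity. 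Condition (c) is already in Proposition \ref{defDelta}, and the first two parts of (b) are the content of Remark \ref{peixe_regular}; the remaining two follow by the mirror computation using $\overline{T}_2$ in place of $\overline{T}_1$, which is available by the symmetric hypothesis. Nondegeneracy of $Y\otimes A$ is immediate since $Y$ and $A$ are nondegenerate.

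The heart of the proof is to verify that $\overline{\Delta}$ is multiplicative, i.e.
$$\overline{\Delta}((y\otimes a)(y'\otimes a'))=\overline{\Delta}(y\otimes a)\,\overline{\Delta}(y'\otimes a').$$
Both sides are multipliers in $M((Y\otimes A)\otimes(Y\otimes A))$, so I would cover them by an arbitrary $((y''\otimes a'')\otimes(1\otimes1))$ (and separately by $((1\otimes1)\otimes(y''\otimes a''))$) and unfold using Remark \ref{peixe_regular} together with the explicit formulas for $\overline{T}_1$ and $\overline{T}_2$ so that each side is rewritten in $Y\otimes A\otimes Y\otimes A$. The resulting equality is then reduced to the combination of: $\Delta_Y$ is a homomorphism, giving $(yy')_{(1)}\otimes(yy')_{(2)}=y_{(1)}y'_{(1)}\otimes y_{(2)}y'_{(2)}$; $\Delta_A$ is a homomorphism; the commutativity of $A$, which permits swapping the central factors $a_{(2)}$ and $y'^{(-1)}_{(2)}a'_{(1)}$; and the fact that $\rho$ is a homomorphism (by the partial comodule algebra hypothesis) together with item~(2) of the remark preceding Proposition \ref{Propexten}, asserting that $T$ is a homomorphism whenever $A$ is commutative. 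The linear extension $\tilde T$ from Proposition \ref{Propexten} lets these identities be applied to elements of $M(Y\otimes A)$ in a uniform way.

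The principal obstacle will be the careful handling of multipliers and of the idempotent $E=\tilde T(1_{M(Y\otimes A)})$ arising from the partial comodule algebra structure: products of $\rho$-values naturally pick up factors of $E$ in the middle tensor slot, and these have to be reabsorbed via Remark \ref{rhoigual_rhoe} ($E\rho(y)=\rho(y)=\rho(y)E$) so that the partial computation collapses to the same final expression as in the global case treated by Delvaux. It is precisely the symmetric hypothesis on both the algebra and coalgebra structures which makes this absorption legitimate on either side of the tensor product, and hence which makes multiplicativity hold simultaneously for the left- and right-cover versions of $\overline{\Delta}$. Once this is established, nondegeneracy of $\overline{\Delta}$ follows by combining (b) with the local-unit property of $Y$ and $A$: given finitely many generators of $(Y\otimes A)\otimes(Y\otimes A)$, a common local unit $e\otimes f\in Y\otimes A$ inserted into $\overline{\Delta}(e\otimes f)$ recovers them, so $\overline{\Delta}(Y\otimes A)\cdot ((Y\otimes A)\otimes(Y\otimes A))$ and its mirror span the whole space, completing the verification that $(Y\otimes A,\overline{\Delta})$ is a multiplier bialgebra.
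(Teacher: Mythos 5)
Your proposal is correct and takes essentially the same route as the paper: Proposition \ref{defDelta} and Remark \ref{peixe_regular} dispose of coassociativity, the multiplier property and the subset conditions, leaving multiplicativity of $\overline{\Delta}$ as the only substantive point. The paper establishes that exactly as you describe, by covering with $(y''\otimes a''\otimes 1\otimes 1)$, using that $\Delta_Y$ and $\Delta_A$ are homomorphisms, local units, and the homomorphic extension $\tilde T$ of Proposition \ref{Propexten} (whose proof is where the absorption $E\rho(y)=\rho(y)=\rho(y)E$ of Remark \ref{rhoigual_rhoe} is used).
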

\begin{proof} Regarding Proposition \ref{defDelta} and Remark  \ref{peixe_regular}, it is enough to show that $\overline{\Delta}$ is a homomorphism considering the usual product in $Y\otimes A$. Indeed,
	\begin{eqnarray*}
		& \ &(y''\otimes a''\otimes 1\otimes 1))\overline{\Delta}(yy'\otimes aa')\\
		&=&\overline{T}_2(y''\otimes a''\otimes yy'\otimes aa')\\
		&=&y''(yy')_{(1)}\otimes (a''\otimes 1\otimes 1)((T\otimes \imath_A)((yy')_{(2)}\otimes \Delta_A(aa')))\\
		&=&(1\otimes a''\otimes 1\otimes 1)(\imath_Y\otimes T\otimes\imath_A)((y''\otimes 1)\Delta_Y(yy')\otimes \Delta_A(aa'))\\
		&\stackrel{(\ast)}{=}&(1\otimes a''\otimes 1\otimes 1)(\imath_Y\otimes T\otimes\imath_A)(y''y_{(1)}\otimes y_{(2)}\otimes\Delta_A(a))(\imath_Y\otimes T\otimes\imath_A)(ey'_{(1)}\otimes y'_{(2)}\otimes \Delta_A(a'))\\		
		&\stackrel{(\ast\ast)}{=}&(1\otimes a''\otimes 1\otimes 1)(\imath_Y\otimes T\otimes\imath_A)(y''y_{(1)}\otimes y_{(2)}\otimes\Delta_A(a))(1\otimes c\otimes 1\otimes1)(\imath_Y\otimes T\otimes\imath_A)(ey'_{(1)}\otimes y'_{(2)}\otimes \Delta_A(a'))\\
		&=&(y''\otimes a''\otimes 1\otimes 1)\overline{\Delta}(y\otimes a)(e\otimes c\otimes1\otimes 1)\overline{\Delta}(y'\otimes a')\\
		&=&(y''\otimes a''\otimes 1\otimes 1)\overline{\Delta}(y\otimes a)\overline{\Delta}(y'\otimes a'),
	\end{eqnarray*}
	for all $y''\in Y$, $a''\in A$. The calculations for the second component are similar. Therefore, $\overline{\Delta}(yy'\otimes aa')=\overline{\Delta}(y\otimes a)\overline{\Delta}(y'\otimes a')$, for all $y,y'\in Y$, $a,a'\in A.$
		
	In $(\ast)$, we consider $e\in Y$ such that $y''y_{(1)}e=y''y_{(1)}$ and use that $T$ is a homomorphism in the extension.
	
In equality $(\ast\ast)$, we use $(y''\otimes a''\otimes 1\otimes 1)\overline{\Delta}(y\otimes a)\in Y\otimes A\otimes Y\otimes A$, then there exists an element $c\in A$ (local unit) on the second tensor. Consequently, we can apply the definition of $\overline{T}_2.$ 
	
	Finally, in the last equality we apply the definition to $(y''\otimes a''\otimes 1\otimes 1)\overline{\Delta}(y\otimes a)$ in $Y\otimes A\otimes Y\otimes A$ and use that $y''y_{(1)}e=y''y_{(1)}.$
\end{proof}

The multiplier bialgebra presented in Theorem \ref{thmultiplierbialg} is denoted by ${Y\rtimes A}.$

\begin{defi} Let $C$ be a multiplier bialgebra. We say that $C$ has \textit{left counit} (resp. \textit{right counit}) if there exists a homomorphism $\varepsilon:C\longmapsto{\Bbbk}$ such that $(\varepsilon\otimes\imath)\Delta=\imath$ (resp. $(\imath\otimes\varepsilon)\Delta=\imath$). Moreover, $C$ is \textit{counitary} if has left and right counit.
\end{defi}

\begin{pro}\label{Counidadeequerda}
	The multiplier bialgebra ${Y\rtimes A}$ has left counit given by
$\overline{\varepsilon}(y\otimes a)=\varepsilon_Y(y)\varepsilon_A(a),$ for all $y\in Y$ and $a\in A.$
\end{pro}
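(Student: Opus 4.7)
The proof breaks cleanly into two parts: verifying that $\overline{\varepsilon}$ is a homomorphism and verifying that $(\overline{\varepsilon} \otimes \imath)\overline{\Delta} = \imath_{Y \otimes A}$. The multiplicativity of $\overline{\varepsilon}$ is immediate because the product on $Y \otimes A$ is coordinatewise and $\varepsilon_Y$, $\varepsilon_A$ are both homomorphisms on their respective regular multiplier Hopf algebras, so
\[
    \overline{\varepsilon}((y \otimes a)(y' \otimes a')) = \varepsilon_Y(yy')\varepsilon_A(aa') = \overline{\varepsilon}(y \otimes a)\,\overline{\varepsilon}(y' \otimes a').
\]

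For the counit identity, my plan is to test the equality $(\overline{\varepsilon} \otimes \imath)\overline{\Delta}(y \otimes a) = y \otimes a$ in $M(Y \otimes A)$ by covering from the right with an arbitrary $y'' \otimes a'' \in Y \otimes A$. Since $\overline{\varepsilon}$ is a homomorphism, the map $\overline{\varepsilon} \otimes \imath$ extends to multipliers and satisfies
\[
    [(\overline{\varepsilon} \otimes \imath)\overline{\Delta}(y \otimes a)](y'' \otimes a'') = (\overline{\varepsilon} \otimes \imath)\bigl[\overline{\Delta}(y \otimes a)\cdot((1 \otimes 1) \otimes (y'' \otimes a''))\bigr].
\]
Using Proposition~\ref{defDelta}, the right-hand side collapses to $(\overline{\varepsilon} \otimes \imath)[\overline{T}_1((y \otimes a) \otimes (y'' \otimes a''))]$, and by Remark~\ref{peixe_regular}(1) this element is the well-covered tensor
\[
    y_{(1)} \otimes y_{(2)}^{(-1)}a_{(1)} \otimes y_{(2)}^{(0)}y'' \otimes a_{(2)}a'' \;\in\; Y \otimes A \otimes Y \otimes A.
\]

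The rest is three counit collapses, each justified by a covering already in place. Factoring out the scalar $\varepsilon_Y(y_{(1)})\,\varepsilon_A(y_{(2)}^{(-1)})\,\varepsilon_A(a_{(1)})$, I would apply in turn: $\varepsilon_A(a_{(1)})\,a_{(2)}a'' = aa''$, coming from $(\varepsilon_A \otimes \imath_A)(\Delta_A(a)(1 \otimes a''))$; then $\varepsilon_A(y_{(2)}^{(-1)})\,y_{(2)}^{(0)} = y_{(2)}$, which is Definition~\ref{defcomodcopar}(i) applied to $y_{(2)}$ via the covering $\rho(y_{(2)})(a_{(1)} \otimes 1)$; and finally $\varepsilon_Y(y_{(1)})\,y_{(2)} = y$ by the counit property on $Y$. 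This yields $yy'' \otimes aa'' = (y \otimes a)(y'' \otimes a'')$, and nondegeneracy of $Y \otimes A$ then upgrades the pointwise identity to the desired equality in $M(Y \otimes A)$.

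The only delicacy I anticipate is bookkeeping: justifying the extension of $\overline{\varepsilon} \otimes \imath$ to $M((Y \otimes A) \otimes (Y \otimes A))$ and tracking that the scalars $\varepsilon_Y(y_{(1)})$, $\varepsilon_A(y_{(2)}^{(-1)})$, $\varepsilon_A(a_{(1)})$ can be moved through the tensor factors in the correct order. This depends on the coverings $\Delta_A(a)(1 \otimes a'')$ and $\rho(y_{(2)})(a_{(1)} \otimes 1)$ being available, which they are by construction of $\overline{T}_1$. No deeper obstacle appears to stand in the way.
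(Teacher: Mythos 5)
Your proposal is correct and follows essentially the same route as the paper's proof: cover $\overline{\Delta}(y\otimes a)$ on the right so that it becomes the element $\overline{T}_1((y\otimes a)\otimes(y''\otimes a''))\in Y\otimes A\otimes Y\otimes A$ of Remark~\ref{peixe_regular}, then perform the three counit collapses using the counit of $Y$, the counit of $A$, and Definition~\ref{defcomodcopar}(i), concluding by nondegeneracy. The only (immaterial) difference is the order in which the scalars $\varepsilon_Y(y_{(1)})$, $\varepsilon_A(y_{(2)}^{(-1)})$, $\varepsilon_A(a_{(1)})$ are absorbed.
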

\begin{proof} It is straightforward to see that $	\overline{\varepsilon}$ is a homomorphism.
		\begin{eqnarray*}
		& \ &(\overline{\varepsilon}\otimes\imath_Y\otimes\imath_A)(\overline{\Delta}(y\otimes a)(1\otimes 1\otimes y'\otimes a'))\\
		&=&(\varepsilon_Y\otimes\varepsilon_A\otimes\imath_Y\otimes\imath_A)[((\imath_Y\otimes T\otimes\imath_A)(\Delta_Y(y)\otimes a_{(1)}\otimes a_{(2)}a'))(1\otimes1\otimes y'\otimes 1)]\\
		&=&(\varepsilon_A\otimes\imath_Y\otimes\imath_A)[(T\otimes\imath_A)(\varepsilon_Y\otimes\imath_Y\otimes\imath_A\otimes\imath_A)(\Delta_Y(y)\otimes a_{(1)}\otimes a_{(2)}a')(1\otimes y'\otimes 1)]\\
		&=&(\varepsilon_A\otimes\imath_Y\otimes\imath_A)[(T\otimes\imath_A)((\varepsilon_Y\otimes\imath_Y)\Delta_Y(y)\otimes a_{(1)}\otimes a_{(2)}a')(1\otimes y'\otimes 1)]\\
		&=&(\varepsilon_A\otimes\imath_Y\otimes\imath_A)[(T(y\otimes a_{(1)})\otimes a_{(2)}a')(1\otimes y'\otimes 1)]\\
		&=&((\varepsilon_A\otimes\imath_Y)(\rho(y)(a_{(1)}\otimes 1))\otimes a_{(2)}a')(y'\otimes 1)\\
		&=&(((\varepsilon_A\otimes\imath_Y)\rho(y))\varepsilon_A(a_{(1)})\otimes a_{(2)}a')(y'\otimes 1)\\
		&\stackrel{\ref{defcomodcopar}(i)}{=}&(y\otimes aa')(y'\otimes 1)\\
		&=&(y\otimes a)(y'\otimes a'),
	\end{eqnarray*}
	for all $y'\in Y$, $a'\in A.$ Then, $(\overline{\varepsilon}\otimes\imath_Y\otimes\imath_A)(\overline{\Delta}(y\otimes a))=y\otimes a,$ for all $y\in Y$, $a\in A.$
\end{proof}

Through a similar calculation to the one presented in the demonstration of Proposition \ref{Counidadeequerda}, it is possible to observe that in the partial case we do not guarantee $\overline{\varepsilon}$ as a right counit of ${Y\rtimes A}$. However, we can show the existence of a counit in a smaller subspace. For this purpose, we will present the following result.

\begin{pro}\label{paracoprosmashparc}Consider $C$ a multiplier  bialgebra with left counit and define the vector space generated 
	$$C_b:=\langle a_{(1)}\varepsilon(a_{(2)}b);\ \  \varepsilon(b)=1_{\Bbbk}\rangle_{\Bbbk}.$$
	Then:
	\begin{enumerate}
		\item [(i)] $C_b$ is a subalgebra of $C;$
		\item [(ii)] $C_b$ is a multiplier subbialgebra of $C;$
		\item [(iii)] $C_b$ has counit.
	\end{enumerate}
\end{pro}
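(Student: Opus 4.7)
My approach is to recast the problem in terms of a linear map $\pi\colon C \to C$. Since $\varepsilon$ is multiplicative, $\varepsilon(a_{(2)}b) = \varepsilon(a_{(2)})\varepsilon(b)$, so for any $b$ with $\varepsilon(b)=1$ the generator $a_{(1)}\varepsilon(a_{(2)}b)$ equals $a_{(1)}\varepsilon(a_{(2)})$ and does not depend on the specific $b$. Define $\pi(a) := a_{(1)}\varepsilon(a_{(2)}b)$ for any such $b$; then $C_b = \pi(C)$. Using that both $\Delta$ and $\varepsilon$ are homomorphisms, a short computation shows $\pi$ is multiplicative, $\pi(ac)=\pi(a)\pi(c)$, while $\pi\circ\pi=\pi$ follows from $(\varepsilon\otimes\varepsilon)\Delta = \varepsilon$, itself a consequence of the left counit axiom.

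For (i), with $x = a_{(1)}\varepsilon(a_{(2)}b)$ and $y = c_{(1)}\varepsilon(c_{(2)}d)$ ($\varepsilon(b)=\varepsilon(d)=1$), I would compute
\[
xy = a_{(1)}c_{(1)}\varepsilon(a_{(2)}b)\varepsilon(c_{(2)}d) = (ac)_{(1)}\varepsilon\bigl((ac)_{(2)}\,bd\bigr),
\]
using $\Delta(ac) = \Delta(a)\Delta(c)$ to identify $(ac)_{(1)}\otimes(ac)_{(2)} = a_{(1)}c_{(1)}\otimes a_{(2)}c_{(2)}$ and multiplicativity of $\varepsilon$ to reorder the scalar factors: $\varepsilon(a_{(2)}b)\varepsilon(c_{(2)}d) = \varepsilon(a_{(2)}c_{(2)}bd)$. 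Since $\varepsilon(bd) = \varepsilon(b)\varepsilon(d) = 1$, this element lies in $C_b$.

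For (ii), coassociativity yields $\Delta(\pi(a)) = \Delta(a_{(1)})\varepsilon(a_{(2)}b) = a_{(1)}\otimes\pi(a_{(2)})$ as a multiplier of $C\otimes C$. The task is then to verify that the four covers $\Delta(\pi(a))(y\otimes 1)$, $(y\otimes 1)\Delta(\pi(a))$, $\Delta(\pi(a))(1\otimes y)$, $(1\otimes y)\Delta(\pi(a))$ lie in $C_b\otimes C_b$ for every $y\in C_b$, so that $\Delta(\pi(a))\in M(C_b\otimes C_b)$; coassociativity of the induced coproduct then follows from coassociativity of $\Delta$. The second tensor factor automatically lies in $C_b$ by the multiplicativity of $\pi$, so the real work is on the first tensor factor, which \emph{a priori} only lies in $C$. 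Reorganizing the Sweedler sum via coassociativity and the multiplicativity identities for $\varepsilon$, one aims to rewrite each cover as a linear combination of elementary tensors of the form $\pi(u)\otimes\pi(v)$. I expect this coverage check to be the main obstacle of the proof.

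For (iii), set $\varepsilon_{C_b} := \varepsilon|_{C_b}$, which is a homomorphism as the restriction of one. The left counit axiom transfers directly from $C$: $(\varepsilon_{C_b}\otimes\imath)\Delta(x) = (\varepsilon\otimes\imath)\Delta(x) = x$ for $x\in C_b\subseteq C$. The right counit is the new ingredient; using $\Delta(\pi(a)) = a_{(1)}\otimes\pi(a_{(2)})$ together with the identity $\varepsilon\circ\pi = \varepsilon$, which is valid because $\varepsilon(\pi(u)) = \varepsilon(u_{(1)})\varepsilon(u_{(2)}) = (\varepsilon\otimes\varepsilon)\Delta(u) = \varepsilon(u)$, one obtains
\[
(\imath\otimes\varepsilon_{C_b})\Delta(\pi(a)) = a_{(1)}\varepsilon(\pi(a_{(2)})) = a_{(1)}\varepsilon(a_{(2)}) = \pi(a).
\]
Hence $\varepsilon_{C_b}$ is a two-sided counit on $C_b$, establishing (iii).
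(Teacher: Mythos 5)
Your reformulation via the idempotent multiplicative map $\pi(a)=(\imath\otimes\varepsilon)\Delta(a)$ is sound and is essentially how the paper argues parts (i) and (iii): the paper's first step is exactly the multiplier identity $(\imath\otimes\varepsilon)(\Delta(a)(1\otimes b))=(\imath\otimes\varepsilon)\Delta(a)$ in $M(C)$, which by nondegeneracy of $C$ shows the generator is independent of $b$, and multiplicativity of the extension of $\imath\otimes\varepsilon$ to $M(C\otimes C)$ together with $\Delta(ad)=\Delta(a)\Delta(d)$ then gives (i); your right-counit computation in (iii) via $\varepsilon\circ\pi=\varepsilon$ also matches the paper's. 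One caution: the justification ``$\varepsilon(a_{(2)}b)=\varepsilon(a_{(2)})\varepsilon(b)$'' treats the uncovered leg $a_{(2)}$ as a standalone element, which is not licensed when $\Delta(a)$ is only a multiplier in $M(C\otimes C)$; the legitimate route is the covered multiplier computation just described, after which your $\pi$ is well defined.

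The genuine gap is part (ii), which you explicitly leave open (``I expect this coverage check to be the main obstacle of the proof''). The missing step is not routine bookkeeping: after writing $\Delta(\pi(a))(1\otimes\pi(d))=a_{(1)}\otimes\pi(a_{(2)}d)$ (suitably covered), the first leg is a priori only in $C$, and the device that puts it into $C_b$ is the left counit axiom used backwards, namely $a_{(1)}\otimes a_{(2)}=a_{(1)}\otimes\varepsilon(a_{(2)}b)a_{(3)}$, equivalently $(\pi\otimes\imath)\Delta=\Delta$. Regrouping the scalar $\varepsilon(a_{(2)}b)$ with the first leg converts the expression into $\pi(a_{(1)})\otimes\pi(a_{(2)}d)$, a sum of simple tensors $\pi(u)\otimes\pi(v)$ taken over a cover of $\Delta(a)$; this is precisely the step marked $(\ast)$ in the paper's proof, and the same insertion disposes of the other three covers. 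Since this is exactly where the hypothesis that $\varepsilon$ is a \emph{left} counit enters part (ii), omitting it leaves $\Delta(C_b)(1\otimes C_b)\subseteq C_b\otimes C_b$, and hence $\Delta(C_b)\subseteq M(C_b\otimes C_b)$, unproved: as written your argument establishes (i) and (iii) but not (ii).
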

\begin{proof}
	$(i)$ We initially observe that, for all $a\in C$, $(\imath\otimes\varepsilon)(\Delta(a)(1\otimes b))=(\imath\otimes\varepsilon)\Delta(a),$ as multipliers in $M(C).$
	Indeed, for every $c\in C$, we have
	\begin{eqnarray*}
		(\imath\otimes\varepsilon)(\Delta(a)(1\otimes b))(c)&=&a_{(1)}c\varepsilon(a_{(2)}b)\\
		&=&(\imath\otimes\varepsilon)((\Delta(a)(c\otimes 1))(1\otimes b))\\
		&=&(\imath\otimes\varepsilon)(\Delta(a)(c\otimes 1))\varepsilon(b)\\
		&=&(\imath\otimes\varepsilon)\Delta(a)(c).
	\end{eqnarray*}
	
Now we prove that $C_b$ is a subalgebra of $C.$ Given $a_{(1)}\varepsilon(a_{(2)}b)$, $d_{(1)}\varepsilon(d_{(2)}b)\in C_b$, 
	\begin{eqnarray*}
		(a_{(1)}\varepsilon(a_{(2)}b))(d_{(1)}\varepsilon(d_{(2)}b))&=&(\imath\otimes\varepsilon)(\Delta(a)(1\otimes b))(\imath\otimes\varepsilon)(\Delta(d)(1\otimes b))\\
		&=&(\imath\otimes\varepsilon)(\Delta(a))(\imath\otimes\varepsilon)(\Delta(d)(1\otimes b))\\
		&=&(\imath\otimes\varepsilon)(\Delta(ad)(1\otimes b))\\
		&=&(ad)_{(1)}\varepsilon((ad)_{(2)}b)\in C_b.
	\end{eqnarray*}
	$(ii)$ Note that $\Delta(C_b)(1\otimes C_b)\subseteq C_b\otimes C_b,$ 
	\begin{eqnarray*}
		\Delta(a_{(1)}\varepsilon(a_{(2)}b))(1\otimes d_{(1)}\varepsilon(d_{(2)}b))&=&\Delta(a_{(1)})(1\otimes d_{(1)})\varepsilon(a_{(2)}b)\varepsilon(d_{(2)}b)\\
		&=&(a_{(1)}\otimes a_{(2)}d_{(1)})\varepsilon(a_{(3)}b)\varepsilon(d_{(2)}b)\\
		&\stackrel{(\ast)}{=}& a_{(1)}\otimes \varepsilon(a_{(2)}b)a_{(3)}d_{(1)}\varepsilon(a_{(4)}b)\varepsilon(d_{(2)}b)\\
		&\stackrel{(i)}{=}&a_{(1)}\varepsilon(a_{(2)}b)\otimes (a_{(3)}d)_{(1)}\varepsilon((a_{(3)}d)_{(2)}b)\in C_b\otimes C_b.
	\end{eqnarray*}
In $(\ast)$ we used that $(\varepsilon\otimes\imath)(\Delta(a)(b\otimes 1))=(\varepsilon\otimes\imath)\Delta(a)=\imath(a),$
for all $a\in C$, \textit i.e. $\varepsilon$ is a left counit. Similarly, the subsets $(C_b\otimes 1)\Delta(C_b)$, $(1\otimes C_b)\Delta(C_b)$ and $\Delta(C_b)(C_b\otimes 1)$ lie in $C_b\otimes C_b.$ Thus, it is easy to see that $\Delta(C_b)\subseteq M(C_b\otimes C_b).$ 

	
	$(iii)$ Since $C_b$ is a multiplier subbialgebra of $C$,  $C_b$ has left counit. Moreover,	
	\begin{eqnarray*}
		(\imath\otimes\varepsilon)\Delta(a_{(1)}\varepsilon(a_{(2)}b))c&=&(\imath\otimes\varepsilon)(\Delta(a_{(1)}\varepsilon(a_{(2)}b))(c\otimes1))\\
		&=&a_{(1)}c\varepsilon(\varepsilon(a_{(2)})a_{(3)}b)\\
		&=&a_{(1)}\varepsilon(a_{(2)}b)c,
	\end{eqnarray*}
	for all $c\in C,$ where in the last equality we used the left counit of $C_b$. Therefore, $C_b$ has counit.
\end{proof}	
\begin{cor} Let ${Y\rtimes A}$ be the multiplier bialgebra. Then, $\overline{Y\rtimes A}=(\imath_Y\otimes\imath_A\otimes\overline{\varepsilon})\overline{\Delta}(Y\rtimes A)$ is a counitary multiplier bialgebra.
\end{cor}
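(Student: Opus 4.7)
The plan is to apply Proposition \ref{paracoprosmashparc} directly with $C = Y\rtimes A$. By Theorem \ref{thmultiplierbialg}, $Y\rtimes A$ is a multiplier bialgebra, and by Proposition \ref{Counidadeequerda} it is equipped with a left counit $\overline{\varepsilon}(y\otimes a)=\varepsilon_Y(y)\varepsilon_A(a)$. Therefore the hypothesis of Proposition \ref{paracoprosmashparc} is satisfied.

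The key step is to identify $\overline{Y\rtimes A}=(\imath_Y\otimes\imath_A\otimes\overline{\varepsilon})\overline{\Delta}(Y\rtimes A)$ with the subspace $(Y\rtimes A)_b$ constructed in Proposition \ref{paracoprosmashparc}. A generic spanning element of $(Y\rtimes A)_b$ has the form $(y\otimes a)_{(1)}\overline{\varepsilon}((y\otimes a)_{(2)}(y'\otimes a'))$ with $\overline{\varepsilon}(y'\otimes a')=1_{\Bbbk}$. By Remark \ref{peixe_regular}, the product $\overline{\Delta}(y\otimes a)(1\otimes 1\otimes y'\otimes a')$ is a genuine element of $Y\otimes A\otimes Y\otimes A$, and applying $\imath_Y\otimes\imath_A\otimes\overline{\varepsilon}$ to it produces exactly the expression above as an element of $Y\otimes A$. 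Thus $\overline{Y\rtimes A}$ coincides precisely with $(Y\rtimes A)_b$ in the notation of the proposition.

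With this identification in hand, Proposition \ref{paracoprosmashparc}(i)--(iii) gives at once that $\overline{Y\rtimes A}$ is a subalgebra of $Y\rtimes A$, that it is a multiplier subbialgebra (so that the restriction of $\overline{\Delta}$ lands in $M(\overline{Y\rtimes A}\otimes\overline{Y\rtimes A})$ and remains coassociative), and that the restriction of $\overline{\varepsilon}$ is a genuine (two-sided) counit on $\overline{Y\rtimes A}$. This immediately yields the conclusion that $\overline{Y\rtimes A}$ is a counitary multiplier bialgebra.

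The main subtlety, and the step I would devote most care to, is the identification in the middle paragraph: since $\overline{\Delta}(y\otimes a)$ is only a multiplier in $M((Y\rtimes A)\otimes (Y\rtimes A))$, the Sweedler-style expression $(y\otimes a)_{(1)}\overline{\varepsilon}((y\otimes a)_{(2)}(y'\otimes a'))$ must be read through the coverings provided by Remark \ref{peixe_regular}, and one has to verify that the resulting span does not depend on an artificial choice of cover. Once this technicality is handled using the usual local-unit arguments available in $Y\rtimes A$, everything else is a direct transcription of Proposition \ref{paracoprosmashparc} into the current notation.
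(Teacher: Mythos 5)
Your proposal is correct and follows exactly the paper's route: the paper's proof of this corollary is the one-line observation that it is ``straightforward from Propositions \ref{Counidadeequerda} and \ref{paracoprosmashparc},'' i.e.\ apply Proposition \ref{paracoprosmashparc} to $C=Y\rtimes A$ with the left counit $\overline{\varepsilon}$, identifying $\overline{Y\rtimes A}$ with $C_b$. Your middle paragraph merely spells out the identification (and the covering issue) that the paper leaves implicit, so there is nothing to correct.
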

\begin{proof} Straightforward from Propositions \ref{Counidadeequerda} and \ref{paracoprosmashparc}. 	
\end{proof}

 The counitary multiplier bialgebra 
	$\overline{Y\rtimes A}=(\imath_Y\otimes \imath_A\otimes\overline{\varepsilon})\overline{\Delta}({Y\rtimes A})$ is called the \textit{Partial Smash Coproduct}.

	\begin{pro}
		Let $Y$ be the symmetric partial $A$-comodule bialgebra via \begin{eqnarray*}
			\rho: Y&\longrightarrow & M(A\otimes Y)\\
			y &\longmapsto & h\otimes y
		\end{eqnarray*}	
		where $h\in M(A)$, $h\otimes h=(h\otimes 1)\Delta(h)=(1\otimes h)\Delta(h)$, $\varepsilon(h)=1_\Bbbk$ and $A$ is commutative. Then:
		\begin{enumerate}
			\item[(i)] $hA$ is a multiplier Hopf subalgebra of $A$;
			\item[(ii)] $\overline{Y\rtimes A}\simeq Y\otimes hA$ as counitary multiplier bialgebra.
			\end{enumerate}
	\end{pro}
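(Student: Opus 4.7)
Applying $\imath \otimes \varepsilon$ to the identity $(h \otimes 1)\Delta(h) = h \otimes h$ gives $h^2 = h$, and combined with commutativity of $A \otimes A$ the two hypotheses on $h$ yield the key reductions
\[ hh_{(1)} \otimes h_{(2)} \;=\; h \otimes h \;=\; h_{(1)} \otimes hh_{(2)}, \]
which together say $\Delta(h)$ acts as the unit on $hA \otimes hA$. The whole argument amounts to collapsing every occurrence of $h$ and $\Delta(h)$ using these identities.

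\textbf{Part (i).} Since $A$ is commutative and $h^2 = h$, the subset $hA$ is a nondegenerate subalgebra of $A$; bilateral local units are inherited from $A$ because $ea = a$ forces $he \cdot ha = h^2 ea = ha$. Using $(h \otimes 1)\Delta(h) = h \otimes h$ and commutativity,
\[ \Delta(ha)(1 \otimes hb) = (h \otimes h)\Delta(a)(1 \otimes b) = ha_{(1)} \otimes ha_{(2)}b \in hA \otimes hA, \]
and the remaining three covering conditions follow by symmetric manipulations from $(1 \otimes h)\Delta(h) = h \otimes h$, so $\Delta$ restricts to a nondegenerate homomorphism $hA \to M(hA \otimes hA)$. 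Finally, $T_1^A$ and $T_2^A$ restricted to $hA \otimes hA$ remain bijective because their inverses (built from $S_A$) land back in $hA \otimes hA$ once $(h \otimes h)\Delta(h) = h \otimes h$ is used; the counit and antipode restrict automatically, and $hA$ is a regular multiplier Hopf subalgebra of $A$.

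\textbf{Part (ii).} Using $T(y \otimes a) = ha \otimes y$ in Remark \ref{peixe_regular}(2),
\[ ((1 \otimes 1) \otimes (y' \otimes a'))\overline{\Delta}(y \otimes a) = y_{(1)} \otimes ha_{(1)} \otimes y'y_{(2)} \otimes a'a_{(2)}, \]
so applying $\imath_Y \otimes \imath_A \otimes \overline{\varepsilon}$ with $\overline{\varepsilon}(y' \otimes a') = 1$ and invoking the counit identities for $\Delta_Y, \Delta_A$ collapses the last two factors and produces $y \otimes ha$. Therefore $\overline{Y \rtimes A} = Y \otimes hA$ as subspaces of $Y \otimes A$, and the natural map $\phi \colon \overline{Y \rtimes A} \to Y \otimes hA$ given by $\phi(y \otimes ha) = y \otimes ha$ is a linear isomorphism. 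Multiplication agrees because $(y \otimes ha)(y' \otimes ha') = yy' \otimes h^2 aa' = yy' \otimes haa'$ in either algebra, and counit preservation reduces to $\varepsilon_A(h) = 1$. For comultiplicativity, Remark \ref{peixe_regular}(1) gives
\begin{align*}
\overline{\Delta}(y \otimes ha)((y' \otimes a') \otimes (1 \otimes 1)) &= y_{(1)}y' \otimes h(ha)_{(1)}a' \otimes y_{(2)} \otimes (ha)_{(2)} \\
&= y_{(1)}y' \otimes hh_{(1)}a_{(1)}a' \otimes y_{(2)} \otimes h_{(2)}a_{(2)} \\
&= y_{(1)}y' \otimes ha_{(1)}a' \otimes y_{(2)} \otimes ha_{(2)}
\end{align*}
by the reduction $hh_{(1)} \otimes h_{(2)} = h \otimes h$, which matches the covering of the tensor-product coproduct $\Delta_{Y \otimes hA}(y \otimes ha) = y_{(1)} \otimes ha_{(1)} \otimes y_{(2)} \otimes ha_{(2)}$; the opposite-side covering is handled symmetrically via $h_{(1)} \otimes hh_{(2)} = h \otimes h$. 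The main obstacle throughout is bookkeeping at the multiplier level: neither $h_{(1)} \otimes h_{(2)}$ nor $a_{(1)} \otimes a_{(2)}$ lies in $A \otimes A$, so every Sweedler manipulation must be justified after a suitable covering by honest elements of $hA$ or $A$, but once the two displayed reductions are invoked all formulas flatten to the standard tensor-product bialgebra structure on $Y \otimes hA$.
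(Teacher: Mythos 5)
Your proposal is correct and follows essentially the same route as the paper: derive $h^2=h$ and the reductions $hh_{(1)}\otimes h_{(2)}=h\otimes h=h_{(1)}\otimes hh_{(2)}$, use them to verify the covering conditions for $\Delta_{hA}$, show via $(\imath_Y\otimes\imath_A\otimes\overline{\varepsilon})\overline{\Delta}$ that every class collapses to $y\otimes ha$, and then check that product, coproduct and counit flatten to the standard tensor-product structure on $Y\otimes hA$. The only differences are cosmetic (you cover $\overline{\Delta}$ on one side where the paper covers on both, and you say a word more than the paper does about bijectivity of $T_1,T_2$ on $hA\otimes hA$), so no further comparison is needed.
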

\begin{proof}
	(i) We need to show that $(hA,\Delta_{hA})$ is a multiplier Hopf algebra, where $\Delta_{hA}$ denotes the coproduct $\Delta_A$ restrict to $hA$.
	
	Indeed, using that $A$ is a commutative nondegenerate algebra, $hA$ also is a commutative nondegenerate algebra. Moreover, for all $ha, hb\in hA$,
	\begin{eqnarray*}
	\Delta_{hA}(ha)(1\otimes hb)&=&h_{(1)}a_{(1)}\otimes h_{(2)}a_{(2)}hb\\
	&=& h_{(1)}a_{(1)}\otimes h_{(2)}ha_{(2)}b\\
	&=& ha_{(1)}\otimes hha_{(2)}b\\
	&=& ha_{(1)}\otimes ha_{(2)}b.
	\end{eqnarray*}
	 Similarly,  $\Delta_{hA}(hA)(hA\otimes 1), (1\otimes hA)\Delta_{hA}(hA)$ and $(hA\otimes 1)\Delta_{hA}(hA)$ lies in $hA\otimes hA$. Therefore, $hA$ is a multiplier Hopf subalgebra.
	 
	 (ii) By Corollary \ref{paracoprosmashparc}, the elements $y\otimes a$ are equal to $y\otimes ha$ in $\overline{Y\rtimes A}$, for all $y\in Y$, $a\in A$. Indeed,
	 \begin{eqnarray*}
	 (\imath_Y\otimes \imath_A\otimes\overline{\varepsilon})\overline{\Delta}({y\otimes a})&=& (\imath_Y\otimes \imath_A\otimes\overline{\varepsilon})(\overline{\Delta}(y\otimes a)(1\otimes1\otimes w\otimes b))\\
	 &=&(\imath_Y\otimes \imath_A\otimes\overline{\varepsilon})((\imath_Y\otimes T)(\Delta_Y(y)\otimes a_{(1)})(1\otimes1\otimes w)\otimes a_{(2)}b)\\
	 &=&y_{(1)}\otimes ha_{(1)}\varepsilon_Y(y_{(2)}w)\varepsilon_{A}(a_{(2)}b)\\
	 &=&y\otimes ha.
	 \end{eqnarray*}
	where $w\in Y$ and $b\in A$ such that $\varepsilon_Y(w)=\varepsilon_A(b)=1_{\Bbbk}$. Thus, $\overline{Y\rtimes A}=\Bbbk \langle y\otimes ha, \ y\in Y \ \mbox{and} \ a\in A \rangle$.
	
	Moreover, for all $y\otimes ha, y'\otimes ha'\in \overline{Y\rtimes A}$, $(y\otimes ha)(y'\otimes ha')=(yy'\otimes haha')$ and
	\begin{eqnarray*}
	\overline{\Delta}(y\otimes ha)((y'\otimes a')\otimes (y''\otimes a''))&=& \overline{T}_1((y\otimes ha)\otimes (y''\otimes a''))((y'\otimes a')\otimes(1\otimes1))\\
	&=& ((\imath_Y\otimes T)(\Delta_Y(y)\otimes h_{(1)}a_{(1)})(1^2\otimes y'')\otimes h_{(2)}a_{(2)}a'')((y'\otimes a')\otimes(1\otimes1))\\
	&=& ((\imath_Y\otimes T)(\Delta_Y(y)(y'\otimes 1)\otimes h_{(1)}a_{(1)})(1^2\otimes y'')\otimes h_{(2)}a_{(2)}a'')(1\otimes a'\otimes 1^2)\\
	&=& (y_{(1)}y'\otimes T(y_{(2)}\otimes h_{(1)}a_{(1)})(1\otimes 1\otimes y'')\otimes h_{(2)}a_{(2)}a'')(1\otimes a'\otimes 1\otimes1)\\
	&=& (y_{(1)}y'\otimes hh_{(1)}a_{(1)} \otimes y_{(2)}y''\otimes h_{(2)}a_{(2)}a'')(1\otimes a'\otimes 1\otimes1)\\
	&=& (y_{(1)}y'\otimes ha_{(1)} \otimes y_{(2)}y''\otimes ha_{(2)}a'')(1\otimes a'\otimes 1\otimes1)\\
	&=&(\imath_Y\otimes\tau_{Y\otimes A}\otimes\imath_A)(\Delta_Y(y)\otimes (h\otimes h)\Delta_A(a))((y'\otimes a')\otimes (y''\otimes a'')), 
	\end{eqnarray*}
for all $y'\otimes a', y''\otimes a''\in\overline{Y\rtimes A}$. Then, 
\begin{eqnarray*}
\overline{\Delta}(y\otimes ha)&=&(\imath_Y\otimes\tau_{Y\otimes A}\otimes\imath_A)(\Delta_Y(y)\otimes (h\otimes h)\Delta_A(a))\\
&=&(\imath_Y\otimes\tau_{Y\otimes hA}\otimes\imath_{hA})(\Delta_Y(y)\otimes \Delta_{hA}(ha))\\
&=&\Delta(y\otimes ha).
\end{eqnarray*}
And, finally $\overline{\varepsilon}(y\otimes ha)=\varepsilon(y\otimes ha)$. The maps $\Delta$ and $\varepsilon$ are the usual coproduct and counit on $Y\otimes hA,$ respectively. Therefore, $\overline{Y\rtimes A}$ and $Y\otimes hA$ have the same structure of counitary multipler bialgebra.
\end{proof}
	\begin{exa} 
				Let $Y$ be the symmetric partial $A_G$-comodule coalgebra via  $\rho(y)=h\otimes y$, given by Example \ref{exemcomoparcviah}. It is easy to check that $\rho$ also defines a structure of symmetric partial $A_G$-comodule bialgebra on $Y$. Thus 
		$\overline{Y\rtimes A_G}\simeq Y\otimes fA_G$ as counitary multiplier bialgebra. Moreover, $ Y\otimes fA_G\simeq Y\otimes A_N$ as counitary multiplier bialgebra, since
		$$f\delta_g(s)=f(s)\delta_g(s)=\left\{
		\begin{array}{rl}
		\delta_g(s) & \text{, } s\in N\\
		0 & \text{, otherwise }
		\end{array},\right.$$	
	for all $f\delta_g\in fA_{G}$ and $s\in G.$ 	
	\end{exa}

\section{Acknowledgments}
The authors would like to thank A. Van Daele for his precious suggestions and disposition to enlight us about the theory of Multiplier
Hopf Algebras. 

\bibliographystyle{abbrv}

\end{document}